\pgfplotsset{compat=1.14}
\newcommand{\An}{\mathrm{A}_n}
\renewcommand{\And}{\mathbf{A}_n^{\frac{1}{2}}}
\newcommand{\R}{\mathbb{R}}
\newcommand{\N}{\mathbb{N}}
\renewcommand{\div}{\text{div }}
\newcommand{\curl}{\text{curl }}
\newcommand{\B}{\mathcal{B}}
\renewcommand{\S}{\mathcal{S}}
\newcommand{\ty}{\tilde{y}}
\newcommand{\tv}{\tilde{v}}
\newcommand{\tyl}{\tilde{y}_l}
\newcommand{\tyr}{\tilde{y}_r}
\newcommand{\hy}{\hat{y}}
\newcommand{\hv}{\hat{v}}
\newcommand{\tval}{\tilde{v}^{\mathrm{aux},l}}
\newcommand{\tvar}{\tilde{v}^{\mathrm{aux},r}}
\newcommand{\tvaln}{\tilde{v}^{\mathrm{aux},l}_n}
\newcommand{\tvarn}{\tilde{v}^{\mathrm{aux},r}_n}
\theoremstyle{plain}
\newtheorem{thm}{Theorem}[section]
\newtheorem{lem}[thm]{Lemma}
\newtheorem{prop}[thm]{Proposition}
\newtheorem{cor}[thm]{Corollary}
\theoremstyle{definition}
\newtheorem{defn}{Definition}[section]
\theoremstyle{remark}
\newtheorem{rem}[thm]{Remark}
\title[Uniqueness for C-H equation with non-homogeneous BC]{Uniqueness for the Camassa-Holm equation with non-homogeneous boundary conditions.}
 \author{Florent Noisette}
  \date{\today}
\begin{document}
\maketitle

\tableofcontents

\begin{abstract}
We establish the uniqueness of solutions of the Camassa-Holm equation on a finite interval with non-homogeneous boundary conditions in the case of bounded momentum.
A similar result for the higher-order Camassa-Holm system is also given.
Our proofs rely on energy-type methods, with some multipliers given as solutions of some auxiliary elliptic systems.
\end{abstract}

{
\small	
\textbf{\textit{Keywords:}} Camassa-Holm, non-homogeneous boundary conditions, transport-elliptic coupling
}

\section{Introduction}

\subsection{Presentation of the models}

The \textit{Camassa-Holm equation} was first introduced by Fokas and Fuchsmeister in \cite{Fokas-Fuchs} for its similarity with the \textit{KdV equation}.
It was later re-derived by Camassa and Holm in \cite{C-H-ini} as a model for water waves in the shallow-water asymptotic under the influence of gravity and no surface tension.
It reads as follows 
\begin{equation} \label{e:C-H-stand}
\partial_t v - \partial_{txx}^3v
    + 2\kappa\,\partial_xv  + 3v\,\partial_x v
    = 2\partial_x v \,\partial_{xx}^2 v + v\,\partial_{xxx}^3v.
\end{equation}
We refer to \cite{ModShaWatLann,Constantin-Lannes,Bhatt-Mikh} for a discussion on the physical relevance of this equation in the context of water waves.
On the other hand, Camassa-Holm equations as well as its higher-order generalizations $(CH_n)$ are useful to describe geodesic flow for the Sobolev $H^n$ metric, see \cite{Constantin-Kolev,Constantin-Ivanov}.
The \textit{higher-order Camassa-Holm system} is introduce for $n\geq 1$ integer as
\begin{equation} \label{e:higher-order-CH}
\partial_t v = B_n(v,v),
\end{equation}
where $v$ is the unknown and $B_n$ is defined through
\begin{equation} \label{e:higher-order-CH-ope}
B_n(u,v) := - \An^{-1}(2\partial_x v \An(u)+v \An(\partial_x u)), 
\end{equation}
where the operator $A_n$ is
\begin{equation} \label{e:higher-order-CH-An}
\An      := \sum_{k=0}^{n}{(-\partial_x^2)^k}.
\end{equation}
with suitable boundary conditions.
The case $n=1$ corresponds to the Camassa-Holm equation.

Moreover, the Camassa-Holm equation was studied a lot because of some interesting features it displays:
it is \textit{bi-hamiltonian completely integrable}, in the sense that it admits a Lax pair,
which allows to construct infinitely many conservations laws, see \cite{Constantin-Escher-inv-scatt,Fokas-Fuchs};
in the case $\kappa=0$, it admits \textit{solitons solutions}, which do not evolve in $C^1$ 
-as they are peaked at the crest- 
and are referred to as \textit{peakons}, see \cite{Constantin-Escher-glob-blow,Lenells};
it also admits \textit{wave-breaking solution}, that is solutions whose $x$ derivative gets unbounded in finite time, see \cite{Constantin-Escher-wave-break,Constantin-Escher-blow-rate}.

The Cauchy problem for \eqref{e:C-H-stand} was extensively studied both on the torus and on the full line, see for example \cite{Coc-Hol-karl,Constantin-Escher-glob-blow,DKT-CH}. 
The initial and boundary value problem on a half-line as well as the one on a segment was also studied.
Escher and Yin extended the well posedness result in $H^s$ for $s>\frac{3}{2}$ and homogeneous boundary condition, see \cite{Esch-Yin-IBVP1, Esch-Yin-IBVP2}.
Zhang, Liu, and Qiao tackle the case of inhomogeneous boundary conditions on the whole line, see \cite{Cusp-CH}.
Then, Perrolaz proved weak-strong uniqueness in the case of inhomogeneous boundary condition and regularity  $W^{1,\infty}$, see \cite{Perrollaz}. 
We also mention that his result was later generalized for other similar equations, see \cite{twocompCH}.
We refer to \cite{Coc-Hol-karl2} for a study of the Cauchy problem for the higher-order Camassa-Holm system on a circle.

The aim of this article is to improve the weak-strong uniqueness for inhomogeneous boundary data stated in \cite{Perrollaz} into a stability estimate with regards to initial and boundary data,
what entails in particular a global in-time uniqueness result.
Our proof differs quite a lot from the previous ones as we do not use any characteristics to obtain our estimates, but rather an inequality on the relative energy between two solutions.
This method to derive estimates was initially used for the 2D incompressible Euler equation with non-homogeneous boundary conditions and bounded vorticity, first in \cite{pap-weig:ini}, then in \cite{Nois1} by the author.

We also deal with the higher-order Camassa-Holm equation, which has the same interesting feature as the Camassa-Holm equation to be recast as a transport-elliptic system.
Yet the order of the elliptic part of this system is precisely $2n$, where $n\geq 1$ is the integer introduced in \eqref{e:higher-order-CH}-\eqref{e:higher-order-CH-An}, and the energy methods have to be carried out differently.

\subsection{Definitions and statement of the main result for the Camassa-Holm equation}

As our proof aims to get local in-time estimates only, let us fix once and for all a positive time $T>0$.
In all that follows, $T$ can be arbitrarily large, 
but we do not want to bother with $L^{\infty}$ functions not being integrable in time.
We denote by $\Omega_T$ the space-time domain :
\begin{equation}
\Omega_T := [0,T]\times (0,1).
\end{equation}

Let us first remark that the equation \eqref{e:C-H-stand} is equivalent to the system
\begin{subequations}
\label{e:C-H-fort}
\begin{align}
\label{e:trans-fort-strech}
\partial_t y + v\cdot \partial_x y   &= -2y\partial_x v   &\text{ on } \Omega_T,       \\
\label{e:ell-v-y}
(1-\partial_x^2)v                    &= y - \kappa        &\text{ on } \Omega_T.
\end{align}
\end{subequations}
The function $y$ is called the \textit{momentum} associated with $v$.
The first equation \eqref{e:trans-fort-strech} is a \textit{transport equation} with additional \textit{stretching term},
while the second \eqref{e:ell-v-y} is elliptic.
Once written under this form, the system is analogous to the \textit{incompressible Euler equation} into vorticity form
\begin{subequations}
\begin{align}
\partial_t u + (u\cdot \nabla)\omega &= (\omega\cdot\nabla) u, \\
\div u &= 0, \\
\curl u &= \omega.
\end{align}
\end{subequations}
For the 2D Euler equation, it is common to prescribe the flux $u\cdot n$ at the boundary ($n$ is the normal vector to the boundary) as well as the entering vorticity $\omega$ see for example \cite{yudo-given}.
Using this analogy, we prescribe the flux $v$ on the boundary as well as the \textit{entering momentum}.

Let $v_l,v_r\in C^0(0,T)$.
We make the assumption that $v_r$ and $v_l$ are non-zero except on a finite set.
Let us define $\Gamma_l$ and $\Gamma_r$ similarly to \cite{Perrollaz} by
\begin{equation}\label{e:Gl-Gr-def}
\Gamma_l := \{t\in [0,T]; v_l>0\} \text{ and } \Gamma_r := \{t\in [0,T]; v_r<0\}.
\end{equation}
Those sets correspond to the sets of times $t$ where the flux is entering the domain at $x=0$ or at $x=1$.
Due to our assumption, the sets $\Gamma_l$ and $\Gamma_r$ can both be written as a finite union of open intervals.

Let $(y_l^c,y_r^c)\in L^{\infty}(\Gamma_l)\times L^{\infty}(\Gamma_r)$ and $y_0\in L^{\infty}(0,1)$.
Following \cite{Perrollaz}, we work with the initial condition
\begin{equation}\label{e:ini-cond}
y_{|t=0} = y_0
\end{equation}
as well as the boundary conditions 
\begin{subequations}
\label{e:bound-cond}
\begin{align}
\label{e:v-bound-cond}
v_{|x=0} &= v_l   \text{ on } (0,T),   \quad   \text{ and }    \quad    v_{|x=1} = v_r \text{ on } (0,T),      \\
\label{e:y-bound-cond}
y_{|x=0} &= y_l^c \text{ on }  \Gamma_l,   \quad\quad \,\,  \text{ and } \quad  y_{|x=1} = y_r^c \text{ on } \Gamma_r.
\end{align}
\end{subequations}
The letter $c$ refers to boundary condition.

The elliptic system \eqref{e:ell-v-y}-\eqref{e:v-bound-cond} can be solved to express $v$ as a function of $y$, $\kappa$~and~$(v_l,v_r)$, for $(t,x)\in \Omega_T$,
\begin{align}\notag
v(t,x) 
    &:= \cosh(x)v_l
     + \int_{0}^{x}{\cosh(x-s)Y(t,s)\mathrm{ds}} \\
\label{e:v-y-link}
    &+\tfrac{\sinh(x)}{\sinh(1)}\left(v_r-\cosh(1)v_l-\int_{0}^{1}{\cosh(1-s)Y(t,s)\mathrm{ds}}\right),
\end{align}
where 
\begin{equation}\label{e:primitives}
Y(t,x):= \int_{0}^{x}{y(t,s)\mathrm{ds}}-\kappa x
\end{equation}
Remark that this expression make sense and define a function $v\in C^0([0,T]\times [0,1])$ as soon as $y\in C^0([0,T],L^{1}(0,1))$ and $v_r,v_l\in C^0(0,T)$. \\

We give our definition of a solution to the system \eqref{e:C-H-fort}-\eqref{e:ini-cond}-\eqref{e:bound-cond}.

\begin{defn}\label{d:weaksol}
We say that a triple $(y,y_r,y_l)\in L^{\infty}(\Omega_T)\cap C^0([0,T],L^2(0,1)) \times L^{\infty}([0,T])^2$
is a weak solution of the Camassa-Holm equation with initial and boundary conditions 
$$(y_0,v_l,v_r,y_r^c,y_l^c)\in L^{\infty}(0,1)\times C^0([0,T])^2\times L^{\infty}(\Gamma_l)\times L^{\infty}(\Gamma_r),$$
when we have the compatibility conditions corresponding to \eqref{e:ini-cond} and \eqref{e:y-bound-cond}
\begin{equation}
y_{|t=0} = y_0, \quad
(y_r)_{|\Gamma_r} = y_r^c 
\quad \text{and} \quad
(y_l)_{|\Gamma_l}=y_l^c,
\end{equation}
and when for all $0\leq t_0\leq t_1 <T$ and for all test function $\phi\in H^1([t_0,t_1]\times[0,1])$,
we have:
\begin{align}
\notag
\int_{t_0}^{t_1}&{\int_{0}^{1}{\big(y\,\partial_t \phi+ yv\,\partial_x \phi - y\partial_x v\,\phi\big)}}    \\
\label{e:weakform-CH}
    &= \int_{t_0}^{t_1}{\big(y_rv_r \phi(\cdot,1)-y_lv_l\phi(\cdot,0) \big)}
    +\int_{0}^{1}{\phi(t_1,\cdot)y(t_1,\cdot)}-\int_{0}^{1}{\phi(t_0,\cdot)y(t_0,\cdot)},
\end{align}
where the function $v$ is given by the formula \eqref{e:v-y-link}.
\end{defn}

\begin{rem}
If $(y,y_r,y_l)$ is a weak solution with $y$ smooth, then $y_{\vert x=0}=y_l$ and $y_{\vert x=1}=y_r$.
\end{rem}

\begin{rem}
Any solution $y\in L^{\infty}(\Omega_T)$ in the sense of distribution of equation \eqref{e:trans-fort-strech} for a transporting vector field $v\in L^1([0,T],H^2(0,1))$ is in $C^0([0,T],L^2(0,1))$.
To look at a complete discussion on the subject, refer to \cite{Boyer}.
\end{rem}

\begin{rem}
Due to its low regularity, we cannot define the trace of $y$ in $0$ and $1$ via the standard trace theorems.
However, any distribution solution of a transport equation admits a trace in a weaker sense as long as the transporting field does not vanish on the boundary. 
To look at a complete discussion on the subject, refer to \cite{Boyer}.
\end{rem}

We can state our main theorem.
\begin{thm}\label{t:main}
Let $(y^1,y_r^1,y_l^1)$ and $(y^2,y_r^2,y_l^2)$ be two weak solutions of the Camassa-Holm equation with the same boundary conditions $(v_l,v_r,y_r^{c},y_l^{c})$ and initial conditions $y_0^1$ and $y_0^2$.
Let us assume that $v_l,v_r\in H^1(0,T)$.
Then there exists $C>0$ such that for any $0\leq T_0<T_1\leq T$ if neither $v_l$ nor $v_r$ change sign on the interval $[T_0,T_1]$, then one has the estimate
\begin{equation*}
\Vert (v^1-v^2)(T_1,\cdot)\Vert_{H^1}^2 
    \leq \left(\Vert (v^1-v^2)(T_0,\cdot)\Vert_{H^1}^2 
         + |\partial_x\tv(T_0,0)|^2
         + |\partial_x\tv(T_0,1)|^2\right) \exp(C(T_1-T_0)).
\end{equation*}
In particular, if $y_0^1=y_0^2$, then
$$(y^1,y_r^1,y_l^1)=(y^2,y_r^2,y_l^2)$$
on the interval $[0,T]$.
\end{thm}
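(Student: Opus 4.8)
The plan is to work entirely with the difference $\tv := v^1 - v^2$ and its momentum $\ty := y^1 - y^2$, which by \eqref{e:ell-v-y} satisfy $(1-\partial_x^2)\tv = \ty$; since the two solutions share the same flux data $v_l, v_r$, the function $\tv$ obeys the homogeneous Dirichlet conditions $\tv(t,0)=\tv(t,1)=0$. Integrating by parts, this gives the basic relation $\Vert\tv(t,\cdot)\Vert_{H^1}^2 = \int_0^1\ty\,\tv\,dx$, so the natural energy is $E(t):=\Vert\tv(t,\cdot)\Vert_{H^1}^2$. First I would subtract the two copies of the weak formulation \eqref{e:weakform-CH} and test against $\tv$ (after a standard time regularization, as $\tv$ is only as regular as the solutions), writing the transport terms in conservative form $\partial_t\ty + \partial_x(v^1\ty) + \partial_x(\tv\,y^2) = -y^1\partial_x\tv - \ty\,\partial_x v^2$. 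Using $\ty=(1-\partial_x^2)\tv$ and integrating by parts repeatedly, every bulk contribution is bounded by $C\,E(t)$, where $C$ depends only on the global $L^\infty$ norms of $y^1,y^2$ (hence of $\partial_x v^i$ and $\partial_{xx}^2 v^i=v^i-y^i+\kappa$). The only terms that do not close are the boundary contributions coming from the stretching term, which collect into
\[
\tfrac12\tfrac{d}{dt}E(t) = \tfrac12 v_l(t)\,|\partial_x\tv(t,0)|^2 - \tfrac12 v_r(t)\,|\partial_x\tv(t,1)|^2 + \mathcal R(t), \qquad |\mathcal R(t)|\le C\,E(t).
\]

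These boundary terms have a favourable sign precisely on the non-entering part of the boundary: where $v_l\le 0$ or $v_r\ge 0$ the corresponding term is $\le0$ and may be discarded. On a sign-constancy interval $[T_0,T_1]$ the difficulty therefore occurs only on the entering boundary ($v_l>0$ on $\Gamma_l$, resp. $v_r<0$ on $\Gamma_r$), where I must control $\int_{T_0}^{T_1}\!|v_l|\,|\partial_x\tv(t,0)|^2$ and $\int_{T_0}^{T_1}\!|v_r|\,|\partial_x\tv(t,1)|^2$. The device is to represent the boundary derivatives through auxiliary elliptic multipliers $\tval,\tvar$, defined as the solutions of $(1-\partial_x^2)w=0$ on $(0,1)$ with Dirichlet data $(w(0),w(1))=(1,0)$ and $(0,1)$ respectively, i.e. $\tval(x)=\sinh(1-x)/\sinh(1)$ and $\tvar(x)=\sinh(x)/\sinh(1)$. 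Two integrations by parts, using $\tv(t,0)=\tv(t,1)=0$, give the clean identities $\partial_x\tv(t,0)=\int_0^1\ty\,\tval\,dx$ and $\partial_x\tv(t,1)=-\int_0^1\ty\,\tvar\,dx$.

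Writing $b_0(t):=\partial_x\tv(t,0)$ and $b_1(t):=\partial_x\tv(t,1)$, I would differentiate these representations in time and insert the conservative equation for $\partial_t\ty$. The transport term contributes a boundary flux proportional to $v_l\,\ty(t,0)$ (resp. $v_r\,\ty(t,1)$); here is the crucial point: on the entering boundary the momentum traces of the two solutions coincide with the common datum $y_l^c$ (resp. $y_r^c$), so $\ty(t,0)=0$ on $\Gamma_l$ and $\ty(t,1)=0$ on $\Gamma_r$ in the weak-trace sense, and this dangerous flux disappears. After again replacing $\ty$ by $(1-\partial_x^2)\tv$ and integrating by parts, all remaining contributions are either bounded by $C\sqrt{E(t)}$ or linear in $b_0,b_1$ with coefficients controlled by $\Vert v_l\Vert_\infty,\Vert v_r\Vert_\infty$ and $\Vert\partial_x v^2\Vert_\infty$; one obtains a closed linear system $\dot b_0=\alpha b_0+\beta b_1+R_0$, $\dot b_1=\gamma b_0+\delta b_1+R_1$ with $\alpha,\dots,\delta\in L^\infty$ and $|R_i|\le C\sqrt{E(t)}$. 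Setting $\mathcal E(t):=E(t)+b_0(t)^2+b_1(t)^2$, the energy identity (with the bad terms absorbed via $|v_l|\,b_0^2+|v_r|\,b_1^2\le C(b_0^2+b_1^2)$) together with the $b_i$-system yields $\tfrac{d}{dt}\mathcal E\le C\mathcal E$. Gr\"onwall's lemma gives $\mathcal E(T_1)\le\mathcal E(T_0)\exp(C(T_1-T_0))$, and since $E(T_1)\le\mathcal E(T_1)$ while $\mathcal E(T_0)=E(T_0)+|\partial_x\tv(T_0,0)|^2+|\partial_x\tv(T_0,1)|^2$, this is exactly the claimed estimate. For the uniqueness statement, $y_0^1=y_0^2$ forces $\ty(0,\cdot)=0$, whence $\tv(0,\cdot)=0$ and $b_0(0)=b_1(0)=0$, so $\mathcal E(0)=0$ and $\mathcal E\equiv0$ on each sign-constancy interval; chaining over the finitely many such intervals covering $[0,T]$ gives $\tv\equiv0$, i.e. $v^1=v^2$, and then $y^1=y^2$ with equality of traces.

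The main obstacle is precisely the control of the entering-boundary flux terms, and it is overcome by the combination of the auxiliary-multiplier representation of $b_0,b_1$ and the vanishing of $\ty$ on the entering boundary. Two points require care in the rigorous execution. First, the low regularity: $\ty$ is only $L^\infty$ in space and the solutions have no classical time derivative, so every manipulation above must be read off the weak formulation \eqref{e:weakform-CH} with admissible test functions and a time mollification, and the traces $\ty(t,0),\ty(t,1)$ must be understood in the weak sense available because $v_l,v_r$ do not vanish on $\Gamma_l,\Gamma_r$; this is where the hypotheses that $v_l,v_r$ change sign only on a finite set and lie in $H^1(0,T)$ (allowing time integrations by parts against the boundary data) enter. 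Second, one must check that replacing $\ty$ by $(1-\partial_x^2)\tv$ and discarding the derivatives of $y^1,y^2$ through integration by parts genuinely removes every term carrying $\partial_x\ty$, since $\partial_x\ty$ is not a function; this is guaranteed by the conservative form of the transport equation together with the Dirichlet conditions on $\tv$, and is the reason the energy is measured in $H^1$ rather than in a stronger norm.
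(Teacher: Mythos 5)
Your strategy is the paper's: a relative $H^1$ energy identity obtained by testing the difference of the weak formulations against $\tv$, an auxiliary elliptic multiplier to represent and then propagate the boundary derivatives $b_0=\partial_x\tv(\cdot,0)$ and $b_1=\partial_x\tv(\cdot,1)$, and a Gronwall argument on the sum. Your multipliers $\sinh(1-x)/\sinh(1)$ and $\sinh(x)/\sinh(1)$ are the Dirichlet-normalized versions of the paper's Zaremba-normalized $u_l,u_r$ (indeed $u_l(x)=\tanh(1)\sinh(1-x)/\sinh(1)$), and deriving an ODE for $b_0(t)=\int_0^1\ty\,u_l$ is the same computation as the paper's testing against $\partial_x\tv(t,0)\,v_l^+(t)\,u_l(x)$; the representation identities you state are correct, as is the observation that the entering momentum flux vanishes because the two solutions share the boundary data.

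The one step that fails as written is the final Gronwall on $\mathcal{E}=E+b_0^2+b_1^2$ taken uniformly on every sign-constancy interval. When you differentiate the representation of $b_1$ via the weak formulation, the boundary contribution is $\tyr v_r$ at $x=1$; this vanishes only on $\Gamma_r$, where both traces equal the common datum $y_r^c$. On an interval where $v_r>0$ the trace $y_r^i$ is the \emph{outgoing} trace determined by the interior solution, so $\tyr\neq 0$ in general, and the resulting term $2b_1\tyr v_r$ in $\frac{d}{dt}b_1^2$ is merely bounded, not $O(\mathcal{E})$; this would leave you with $\dot{\mathcal{E}}\leq C\mathcal{E}+C$, which destroys both the claimed exponential estimate and the propagation of $\mathcal{E}\equiv 0$ from zero initial data. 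The repair is the paper's case analysis \eqref{i:gron-syst}: include $b_0^2$ in the Gronwall quantity only when $v_l>0$ on the interval and $b_1^2$ only when $v_r<0$, and absorb the cross term in $\dot b_0$ involving $b_1$ (present even when $b_1^2$ is not tracked) into the outgoing energy flux $|\partial_x\tv(t,1)|^2 v_r\geq 0$ supplied by the energy inequality, via a Young inequality with a coefficient strictly less than $1$ (the paper's $\tfrac{1}{2}$ in \eqref{i:aux-ineq-l}). With that modification your argument closes and coincides with the paper's.
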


\begin{rem}
The existence of a weak solution to the Camassa-Holm equation in the sense of Definition \ref{d:weaksol}, as well as a weak-strong uniqueness property, were tackled in \cite{Perrollaz}.
\end{rem}

\subsection{Definitions and main results for the higher order Camassa-Holm equation}

Let $n\geq 1$ be an integer.
We define the operator $\An$ by
\begin{equation}
\An := \sum_{k=0}^n{(-\partial_x^{2})^k}.
\end{equation}
For example, the operator $A_1$ is equal to 
\begin{equation}
A_1= \mathrm{Id}-\partial_x^2  ,
\end{equation}
which is the elliptic operator used to describe the standard Camassa-Holm equation, see \eqref{e:C-H-fort}.

We say that a couple $(v,y)$ is a solution of the \textit{higher order Camassa-Holm equation}  when
\begin{subequations}
\label{e:CHgen-fort}
\begin{align}
\label{e:trans-fort-strech-gen}
\partial_t y + v\partial_x y     &= -2y\partial_x v     &\text{ on } (0,1),       \\
\label{e:ell-v-y-gen}
\An v                                  &= y          &\text{ on } (0,1).
\end{align}
\end{subequations}

\begin{rem}
As mentioned in the presentation of the models,
the higher order Camassa-Holm equation were introduced by Constantin and Kolev in \cite{Constantin-Kolev}.
In their initial formulation, they are written on the torus as
\begin{equation}
\partial_t v = B_n(v,v),
\end{equation}
where $v$ is the unknown and $B_n$ is defined through
\begin{subequations}
\begin{align}
B_n(u,v) &:= - \An^{-1}(2\partial_x v \An(u)+v \An(\partial_x u)), \\
\An      &:= \sum_{k=0}^{n}{(-\partial_x^2)^k}.
\end{align}
\end{subequations}
By introducing the \textit{momentum} $y:=\An(v)$, 
we obtain the formulation \eqref{e:CHgen-fort}, 
which suits us more in the context of a boundary value problem.
\end{rem}

We prescribe the velocity $v$ on the boundary $\{0,1\}$.
We also prescribe the momentum $y$ on the part of the boundary where $v_l>0$ or $v_r<0$.
Moreover, the elliptic problem \eqref{e:ell-v-y-gen} is of order $n$.
Therefore, we prescribe more derivatives of $v$ at the boundary.
\begin{subequations}
\label{e:syst-v-y-ell-gen}
\begin{align}
\An v                             &= y,                    \\ \label{e:v-y-ell-gen-BCl}
(\S_i(v)(0))_{i\in [\![0,n-1]\!]} &= \mathbf{v_l} ,        \\ \label{e:v-y-ell-gen-BCr}
(\S_i(v)(1))_{i\in [\![0,n-1]\!]} &= \mathbf{v_r} , 
\end{align}
\end{subequations}
where the operators $\S_i$ are defined through
\begin{equation}
\forall x\in \{0,1\}, \quad  \S_i(g)(x) = \partial_x^ig(x)
\end{equation}
With that in mind let us head to the definition of weak solutions. \\

\begin{defn}[Variational solution to the Elliptic problem] \label{d:syst-v-y-ell-gen}
Let $\mathbf{v_l},\mathbf{v_r}\in\R^n$ 
and $y\in H^{-n}([0,1]):=H^n_0(0,1) '$.
Let $\chi\in C^{\infty}([0,1])$ be a smooth function equal to $0$ in a neighborhood of $1$ and equal to $0$ in a neighborhood of $1$.
We define $b=b(\mathbf{v_l},\mathbf{v_r},\chi)$ through
\begin{equation}
b(\mathbf{v_l},\mathbf{v_r},\chi)(x) = \sum_{k=0}^{n-1}{\left(\frac{x^k\chi(x)}{k!}\mathbf{v_l}_k+\frac{(1-x)^k\chi(1-x)}{k!}\mathbf{v_r}_k\right)}.
\end{equation}
We say that $v\in H^n(0,1)$ is a solution of the system \eqref{e:syst-v-y-ell-gen},
when $v-b(\mathbf{v_l},\mathbf{v_r},\chi)$ belongs to $H^n_0(0,1)$ (closure of $C^{\infty}_c(0,1)$ for the $H^n$ norm) and when for all $g\in H^n_0(0,1)$, one has 
\begin{equation}
\int_{0}^{1}{(y-\An b)g}=\int_{0}^{1}{\And (v-b)\cdot  \And g}.
\end{equation}
Where we define the operator $\And$ through
\begin{equation}
\And := (Id,\partial_x,...,\partial_x^n),
\end{equation}
and $\cdot$ is the standard scalar product in $\R^n$.
\end{defn}

\begin{rem}
This definition does not depend on the choice of $\chi$.
Moreover, thanks to Lemma \ref{l:IPP-itérée}, any smooth solution of the system \eqref{e:syst-v-y-ell-gen} is also a variational solution for this system.
\end{rem}

\begin{lem}\label{l:ell-reg-gen}
Let $y\in H^{-n}([0,1])$ be a function, and $\mathbf{v_l},\mathbf{v_r}\in\R^n$.
There exists a unique solution $v$ to the problem \eqref{e:syst-v-y-ell-gen} in the sense of Definition \ref{d:syst-v-y-ell-gen}.
This solution verifies the estimate: 
\begin{equation}
\Vert v\Vert_{H^n} 
    \lesssim \Vert y\Vert_{H^{-n}} 
     + |\mathbf{v_l}|
     + |\mathbf{v_r}|.
\end{equation}
Moreover for $y\in L^{\infty}(0,1)$, one has
\begin{equation}
\Vert v\Vert_{W^{2n,\infty}} 
    \lesssim \Vert y\Vert_{L^{\infty}} 
     + |\mathbf{v_l}|
     + |\mathbf{v_r}|.
\end{equation}
\end{lem}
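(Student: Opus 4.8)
The plan is to prove the two assertions separately: existence, uniqueness and the $H^n$ bound follow from a direct Lax--Milgram argument, while the $W^{2n,\infty}$ bound is obtained by an elliptic bootstrap exploiting the one-dimensional structure of $\An$.

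First I would lift the boundary data. Writing $b = b(\mathbf{v_l},\mathbf{v_r},\chi)$ for the fixed smooth function of Definition~\ref{d:syst-v-y-ell-gen} and setting $w := v-b$, the problem becomes: find $w\in H^n_0(0,1)$ with $a(w,g) = \int_0^1(y-\An b)g$ for all $g\in H^n_0(0,1)$, where $a(u,g) := \int_0^1 \And u\cdot \And g = \sum_{k=0}^n\int_0^1 \partial_x^k u\,\partial_x^k g$. The key observation is that $a$ is exactly the $H^n$ inner product, hence bounded and coercive on $H^n_0(0,1)$, with $a(u,u)=\Vert u\Vert_{H^n}^2$. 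Since $y\in H^{-n}=(H^n_0)'$ and $\An b\in C^\infty\hookrightarrow L^2\hookrightarrow H^{-n}$, the right-hand side is a bounded linear functional on $H^n_0(0,1)$, so Lax--Milgram (here merely Riesz representation, $a$ being symmetric) yields a unique $w$, hence a unique $v=w+b$. Testing with $g=w$ gives $\Vert w\Vert_{H^n}^2 \le (\Vert y\Vert_{H^{-n}}+\Vert \An b\Vert_{H^{-n}})\Vert w\Vert_{H^n}$, and since $b$ depends linearly on $(\mathbf{v_l},\mathbf{v_r})$ for fixed $\chi$, both $\Vert \An b\Vert_{H^{-n}}$ and $\Vert b\Vert_{H^n}$ are $\lesssim |\mathbf{v_l}|+|\mathbf{v_r}|$; adding $\Vert b\Vert_{H^n}$ to the bound on $\Vert w\Vert_{H^n}$ produces the first estimate.

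For the second estimate I would first recover the strong form: testing the variational identity against $g\in C_c^\infty(0,1)$ and integrating by parts to move every derivative off $v-b$ shows that $\An v = y$ in $\mathcal D'(0,1)$, that is $\partial_x^{2n}v = (-1)^n\big(y-\sum_{k=0}^{n-1}(-1)^k\partial_x^{2k}v\big)$. Now I would bootstrap on the Sobolev index, using that in one space dimension antidifferentiation gains one full derivative: if $v\in H^s(0,1)$ with $s\ge n$, then the right-hand side lies in $H^{s-2n+2}(0,1)$ (the worst term being $k=n-1$), so $\partial_x^{2n}v\in H^{s-2n+2}$ and therefore $v\in H^{s+2}(0,1)$ up to the boundary. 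Starting from $v\in H^n$ and $y\in L^2$ (guaranteed by $y\in L^\infty$), after $\lceil n/2\rceil$ iterations one reaches $v\in H^{2n}(0,1)$.

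Finally, once $v\in H^{2n}(0,1)$ I would upgrade to the $L^\infty$ endpoint: for every $k\le n-1$ one has $\partial_x^{2k}v\in H^{2n-2k}(0,1)\hookrightarrow L^\infty$, so with $y\in L^\infty$ the displayed identity forces $\partial_x^{2n}v\in L^\infty$, while $\partial_x^j v\in H^{2n-j}\hookrightarrow L^\infty$ for $j\le 2n-1$; hence $v\in W^{2n,\infty}(0,1)$. Tracking the constants through the finitely many steps and combining the already-established $H^n$ bound with $\Vert y\Vert_{H^{-n}}\lesssim\Vert y\Vert_{L^\infty}$ gives $\Vert v\Vert_{W^{2n,\infty}}\lesssim\Vert y\Vert_{L^\infty}+|\mathbf{v_l}|+|\mathbf{v_r}|$. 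I expect the bootstrap to be the only delicate point: the equation determines $\partial_x^{2n}v$ only through derivatives of order up to $2n-2$, so the regularity gain is not immediate at top order; but the one-dimensional fact that integrating an $H^\sigma$ function yields an $H^{\sigma+1}$ function makes each iteration net two derivatives and, via the representation of $v$ as a polynomial plus a $2n$-fold antiderivative, delivers the gain up to the boundary with no separate boundary-regularity argument needed.
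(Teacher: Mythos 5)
Your proposal is correct, and its first half (lifting the boundary data by $b$, observing that $\int_0^1 \And u\cdot\And g$ is exactly the $H^n$ inner product, and applying Lax--Milgram to get existence, uniqueness and the $H^{-n}$--$H^n$ bound) is precisely what the paper does. For the $W^{2n,\infty}$ estimate the paper instead invokes Lemma \ref{l:tech-ODE}: it normalizes the Cauchy data at $x=0$ by subtracting smooth homogeneous solutions and then iterates the primitivation operator on the identity $f=-\sum_i a_i\mathcal{P}^{2n-i}f+\mathcal{P}^{2n}g$, which expresses $f$ itself (not its top derivative) through antiderivatives of lower-order terms and therefore never leaves the scale of positive-regularity spaces. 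Your route --- recovering $\An v=y$ in $\mathcal{D}'(0,1)$ and climbing the Sobolev ladder two derivatives at a time before converting to $L^\infty$ by embedding --- reaches the same conclusion by the same underlying one-dimensional mechanism, but at the first iterations (for $n\ge 3$) the right-hand side $\sum_{k\le n-1}(-1)^k\partial_x^{2k}v$ only lies in $H^{s-2n+2}(0,1)$ with $s-2n+2<0$, so you are implicitly using that $\partial_x^{2n}v\in H^{\sigma}(0,1):=\bigl(H^{-\sigma}_0(0,1)\bigr)'$ with $\sigma<0$ still yields $v\in H^{\sigma+2n}(0,1)$; this is true in one dimension (write an element of $H^{\sigma}$ as a finite sum $\sum_i\partial_x^i f_i$ with $f_i\in L^2$ and antidifferentiate term by term), but it deserves a sentence, and it is exactly the step the paper's primitivation identity is designed to sidestep. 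Net effect: your argument is self-contained where the paper delegates to an auxiliary ODE lemma, at the cost of a brief detour through negative-order Sobolev spaces.
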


\begin{proof}
The existence and uniqueness comes from Lax-Milgram's theorem. 
The $H^{-n}-H^n$ estimates is straightforward.
The $L^{\infty}-W^{2n,\infty}$ estimates comes from Lemma \ref{l:tech-ODE} below.
\end{proof}

\begin{rem}
Note that this last estimate would no longer hold in higher dimensions.
However, Schauder's estimates would give us $L^p-W^{2n,p}$ estimates for all $p$.
\end{rem}

\begin{lem}\label{l:tech-ODE}
Let $k\in \N$ be an integer, $a_0,...,a_{k-1}\in\R$ real numbers, $g\in L^{\infty}(0,1)$ a function.
Let $f$ be a solution of the ODE
\begin{equation}\label{e:ODE}
f^{(k)}+\sum_{i=0}^{k}{a_if^{(i)}} = g 
\quad\text{ on } (0,1).
\end{equation}
Then $f$ belongs to $W^{k,\infty}(0,1)$.
\end{lem}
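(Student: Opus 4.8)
The plan is to prove that a solution $f$ of the linear constant-coefficient ODE \eqref{e:ODE} with right-hand side $g\in L^\infty(0,1)$ automatically gains $k$ derivatives in $L^\infty$, i.e. $f\in W^{k,\infty}(0,1)$. First I would clarify the regularity in which the equation is posed: a priori $f$ is only assumed to satisfy \eqref{e:ODE} in the sense of distributions, so the statement is really a bootstrapping/elliptic-regularity statement in one dimension. The key structural observation is that the highest-order term $f^{(k)}$ is isolated and can be expressed as
\begin{equation*}
f^{(k)} = g - \sum_{i=0}^{k}a_i f^{(i)}.
\end{equation*}
So the strategy is the standard bootstrap: assuming we already know $f\in W^{j,\infty}$ for some $j<k$, the right-hand side above is in $L^\infty$ provided $j\ge k-1$, hence $f^{(k)}\in L^\infty$, which gives $f\in W^{k,\infty}$. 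The real content is therefore to get the induction off the ground from the bare distributional hypothesis.

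The cleanest way to start the induction is to treat the equation as a first-order linear system. I would set $F=(f,f',\dots,f^{(k-1)})^{\mathsf t}$ and rewrite \eqref{e:ODE} in companion form $F' = MF + G$, where $M$ is the (constant) companion matrix built from the coefficients $a_i$ and $G=(0,\dots,0,g-a_k f^{(k-1)})^{\mathsf t}$; note the $a_k f^{(k)}$ term, if present, only rescales the leading coefficient, so without loss of generality the top-order coefficient is normalized. Since $M$ has constant coefficients and $G\in L^1(0,1)$ once we know $f^{(k-1)}\in L^\infty$, the variation-of-constants (Duhamel) formula
\begin{equation*}
F(x) = e^{xM}F(0) + \int_0^x e^{(x-s)M}G(s)\,\mathrm{d}s
\end{equation*}
represents $F$ as a continuous function, and in fact each component inherits $L^\infty$ and then absolute continuity. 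Differentiating this representation and using $G\in L^\infty$ then yields $F'\in L^\infty$, which is exactly $f^{(k)}\in L^\infty$.

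The main obstacle is the very first step: justifying that a merely distributional solution $f$ already has enough pointwise regularity for the companion-form argument and the Duhamel formula to apply. Here I would argue that any distributional solution of a linear ODE with smooth (constant) coefficients and $L^\infty$ source is, after integrating $k$ times, equal almost everywhere to a genuine $C^{k-1}$ function; concretely, integrating \eqref{e:ODE} repeatedly shows $f$ agrees with a classical solution of the integrated equation, and one checks that the difference of two distributional solutions solves the homogeneous equation and is therefore a (smooth) exponential-polynomial. This regularizing step is local and relies only on the one-dimensionality and the constant-coefficient structure. Once $f$ is identified with its continuous representative together with its first $k-1$ derivatives, the bootstrap closes immediately. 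For the final bound in Lemma \ref{l:ell-reg-gen}, one applies this with $k=2n$ to the equation satisfied by $v$ (obtained by expanding $\An v = y$ into the ODE $\sum_{k=0}^n (-1)^k v^{(2k)} = y$) and then iterates the resulting $L^\infty$ control on all derivatives up to order $2n$, which gives the claimed $W^{2n,\infty}$ estimate in terms of $\Vert y\Vert_{L^\infty}$ and the boundary data.
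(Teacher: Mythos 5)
Your proposal is correct and takes essentially the same route as the paper: the paper also normalizes the data at $x=0$ by subtracting a smooth solution of the homogeneous equation, applies the primitivation operator $\mathcal{P}$ $k$ times to obtain the integral identity $f=-\sum_i a_i\mathcal{P}^{k-i}f+\mathcal{P}^kg$, and bootstraps, which is exactly the content of your repeated-integration step. The companion-matrix/Duhamel packaging is only a cosmetic difference from iterating $\mathcal{P}$ directly.
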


\begin{proof}
By subtracting a combination of solutions of the homogeneous equation, which is $C^{\infty}$, we can assume that
\begin{equation}
f(0)=f^{(1)}(0)=\dots = f^{(k-1)}(0)=0.
\end{equation}
We denote by $\mathcal{P}$ the primitivation operator
\begin{equation}
\mathcal{P}f(x) := \int_{0}^{x}{f(x')\, \mathrm{d}x'}.
\end{equation}
We apply the operator $\mathcal{P}$ on equation \eqref{e:ODE} $k$ times 
\begin{equation}
f = -\sum_{i=0}^{k}{a_i\mathcal{P}^{k-i}f} + \mathcal{P}^kg.
\end{equation}
This allows ending the proof by means of a bootstrap argument.
\end{proof}
\begin{defn}[Weak solution to the higher-order CH system] \label{d:weaksol-gen}
Let $v_l,v_r\in C^0(0,T)$.
We make the assumption that $v_r$ and $v_l$ are non-zero except on a finite set.
We define the set $\Gamma_l$ and $\Gamma_r$ by \eqref{e:Gl-Gr-def}.

We say that boundary conditions
$$(\mathbf{v_l},\mathbf{v_r})\in  C^0([0,T])^n\times C^0([0,T])^n$$
are \textit{admissible}, with respect to $(v_l,v_r)$, when their first respective components $(\mathbf{v_l})_1$ and $(\mathbf{v_r})_1$ are equals respectively to $v_r$ and $v_l$
\begin{equation}
(\mathbf{v_l})_1 = v_l   \text{ and }   (\mathbf{v_r})_1=v_r.
\end{equation}

We say that a triple 
$$(y,y_r,y_l)\in L^{\infty}(\Omega_T)\cap C^0([0,T],L^2(0,1))\times L^{\infty}([0,T])^2$$
is a weak solution of the higher-order Camassa-Holm equation with initial and boundary conditions 
\begin{equation*}
(y_0,\mathbf{v_l},\mathbf{v_r},y_r^c,y_l^c)
    \in L^{\infty}(0,1)
      \times C^0([0,T])^n
      \times C^0([0,T])^n
      \times L^{\infty}(\Gamma_l)
      \times L^{\infty}(\Gamma_r),    
\end{equation*}
when we have the compatibility condition
\begin{equation}
(y_r)_{|\Gamma_r} = y_r^c \text{ and } (y_l)_{|\Gamma_l}=y_l^c,
\end{equation}
and when for all $0\leq t_0\leq t_1 <T$ and for all test function $\phi\in H^1_0([t_0,t_1]\times[0,1])$,
we have:
\begin{align}
\notag
\int_{t_0}^{t_1}&{\int_{0}^{1}{\big(y\,\partial_t \phi+ yv\,\partial_x \phi - y\partial_x v \phi\big)}}    \\
\label{e:weakform-CH_gen}
    &= \int_{t_0}^{t_1}{\big(y_rv_r \phi(\cdot,1)-y_lv_l\phi(\cdot,0) \big)}
    +\int_{0}^{1}{\phi(t_1,\cdot)y(t_1,\cdot)}-\int_{0}^{1}{\phi(t_0,\cdot)y(t_0,\cdot)},
\end{align}
where the function $v$ is given as the unique solution of the elliptic problem \eqref{e:syst-v-y-ell-gen}.
\end{defn}

With that in mind, we can formulate a local in-time existence theorem as follows.

\begin{thm}\label{t:sec}
Let $v_l,v_r\in C^0(0,T)$.
We make the assumption that $v_r$ and $v_l$ are non-zero except on a finite set.
We define the set $\Gamma_l$ and $\Gamma_r$ through \eqref{e:Gl-Gr-def}.
Let 
$(y_0,\mathbf{v_l},\mathbf{v_r},y_r^c,y_l^c)
    \in L^{\infty}(0,1)
      \times C^0([0,T])^n
      \times C^0([0,T])^n
      \times L^{\infty}(\Gamma_l)
      \times L^{\infty}(\Gamma_r)$ 
be admissible initial and boundary conditions associated with $v_r$ and $v_l$ (meaning that $(\mathbf{v_l}, \mathbf{v_r})$ is admissible with respect to $(v_r,v_l)$).

Then there exists $\tilde{T}>0$ and a weak solution 
$(y,y_r,y_l) 
    \in L^{\infty}([0,\tilde{T}]\times[0,1])\cap C^0([0,\tilde{T}],L^{2}((0,1)))
      \times L^{\infty}([0,\tilde{T}])^2$ 
to the higher-order Camassa-Holm equation 
with $(y_0,\mathbf{v_l},\mathbf{v_r},y_r^c,y_l^c)$ as initial and boundary data.
\end{thm}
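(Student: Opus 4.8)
The plan is to construct the solution via a standard iteration/fixed-point scheme, exploiting the transport-elliptic structure of the system. The key observation is that the system decouples into two parts with very different natures: equation \eqref{e:ell-v-y-gen} is elliptic and is handled by Lemma \ref{l:ell-reg-gen}, which tells us that $v$ depends on $y$ in a gaining manner (we gain $2n$ derivatives, at least $W^{2n,\infty}$ control from an $L^\infty$ momentum), while equation \eqref{e:trans-fort-strech-gen} is a transport equation with a stretching source term $-2y\partial_x v$. First I would set up the iteration: given an approximate momentum $y^{(k)}\in L^\infty(\Omega_{\tilde T})$, solve the elliptic problem \eqref{e:syst-v-y-ell-gen} at each time $t$ to obtain $v^{(k)}(t,\cdot)\in W^{2n,\infty}(0,1)$, and then solve the transport equation for $y^{(k+1)}$ using $v^{(k)}$ as the (now known) transporting velocity field, with the prescribed inflow data $(y_l^c,y_r^c)$ imposed on $\Gamma_l,\Gamma_r$ and the initial datum $y_0$.

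Next I would establish a uniform \emph{a priori} bound making the scheme stable on a short time interval $[0,\tilde T]$. The crucial quantity is $\Vert y^{(k)}(t,\cdot)\Vert_{L^\infty}$. Along characteristics of the transport equation, the stretching term produces an amplification factor governed by $\partial_x v^{(k)}$, and by Lemma \ref{l:ell-reg-gen} we have $\Vert \partial_x v^{(k)}\Vert_{L^\infty}\lesssim \Vert y^{(k)}\Vert_{L^\infty}+|\mathbf{v_l}|+|\mathbf{v_r}|$. This yields a Gronwall-type differential inequality of the form $\tfrac{d}{dt}\Vert y\Vert_{L^\infty}\lesssim \Vert y\Vert_{L^\infty}^2 + (\text{boundary data})$, which blows up in general but stays bounded on a time interval $\tilde T$ depending only on $\Vert y_0\Vert_{L^\infty}$ and the norms of the boundary data. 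On this interval the iterates remain in a fixed ball of $L^\infty(\Omega_{\tilde T})$. One must also propagate the inflow conditions correctly: since $v_l,v_r$ do not vanish except on a finite set, the characteristics entering through the boundary on $\Gamma_l,\Gamma_r$ carry the prescribed data $y_l^c,y_r^c$, and the finite-set degeneracy is harmless.

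Then I would prove convergence of the scheme. The natural space in which to contract is not $L^\infty$ but a weaker topology, for instance $C^0([0,\tilde T],L^2(0,1))$, since the transport equation is not smoothing. Writing the equations for the differences $y^{(k+1)}-y^{(k)}$ and $v^{(k)}-v^{(k-1)}$, one performs an $L^2$ energy estimate on the transport equation; the stretching term and the transport term both produce contributions controlled by the uniform $L^\infty$ bounds already established, and the elliptic estimate of Lemma \ref{l:ell-reg-gen} (in its $H^{-n}$--$H^n$ form, or an $L^2$-based variant) transfers control of $y^{(k)}-y^{(k-1)}$ to control of $v^{(k)}-v^{(k-1)}$. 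Shrinking $\tilde T$ if necessary gives a contraction factor strictly less than one, so the iterates converge in $C^0([0,\tilde T],L^2(0,1))$ to a limit $(y,v)$. The uniform $L^\infty$ bound is preserved in the limit by weak-$*$ compactness, and the limit inherits the $C^0([0,\tilde T],L^2)$ regularity. Finally I would verify that the limit satisfies the weak formulation \eqref{e:weakform-CH_gen} by passing to the limit in the weak formulation of each iterate, using the strong $L^2$ convergence of $y^{(k)}$ and the convergence of $v^{(k)}$ to justify passage to the limit in the nonlinear products $y^{(k)}v^{(k)}$ and $y^{(k)}\partial_x v^{(k)}$.

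The hard part will be the rigorous treatment of the traces and the inflow boundary conditions at low regularity. Since $y$ is only $L^\infty$, its traces $y_l,y_r$ on the boundary are not defined by the classical trace theorem; as noted in the remarks, they exist only in the weak renormalized sense available because the transporting field $v$ does not vanish on the boundary (except on a finite set). Making the solution of the transport equation well-defined with the prescribed inflow data, showing that the weak traces $y_l,y_r\in L^\infty([0,T])$ genuinely exist and satisfy the compatibility condition $(y_r)_{|\Gamma_r}=y_r^c$, $(y_l)_{|\Gamma_l}=y_l^c$, and verifying that these traces converge along the iteration, is the delicate technical core of the argument, for which I would invoke the theory developed in \cite{Boyer} on trace properties of transport equations with non-vanishing boundary velocity.
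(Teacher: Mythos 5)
Your overall architecture (decouple the elliptic and transport parts, solve the transport equation along characteristics with inflow data prescribed on $\Gamma_l,\Gamma_r$, use Lemma \ref{l:ell-reg-gen} to gain $2n$ derivatives on the velocity, and close a short-time a priori bound of the form $\Vert y\Vert_{L^\infty}\le c_1\exp\big(CT\Vert\partial_x v\Vert_{L^\infty}\big)$) matches the paper's Appendix B, as does the appeal to \cite{Boyer} for the weak traces. The divergence --- and the gap --- is in the convergence mechanism. You propose to make the iteration a contraction in $C^0([0,\tilde T],L^2(0,1))$ by ``an $L^2$ energy estimate on the transport equation'' for the differences. Write the equation satisfied by $z^{(k)}:=y^{(k+1)}-y^{(k)}$: its source contains the term $(v^{(k)}-v^{(k-1)})\,\partial_x y^{(k)}$ (equivalently, after rewriting in divergence form, $\partial_x\big((v^{(k)}-v^{(k-1)})y^{(k)}\big)$). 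Since the iterates $y^{(k)}$ are only bounded in $L^\infty$ --- there is no uniform bound on any derivative of $y^{(k)}$, and that is the whole point of the bounded-momentum setting --- this source lies in $L^\infty_tH^{-1}_x$ and not in $L^2_x$. Testing against $z^{(k)}$ and integrating by parts merely transfers the derivative onto $z^{(k)}$, which is likewise not in $H^1$. So the classical ``bounded in a high norm, contract in a low norm'' scheme does not close: there is no higher norm in which the iterates are uniformly bounded that could absorb the loss of one derivative. Closing a difference estimate in the weaker topology $H^{-n}_x$ for $y$ (equivalently $H^n_x$ for $v$) is possible, but that is precisely the content of the paper's uniqueness theorems, and it requires the auxiliary elliptic multipliers of Sections \ref{s:aux-est} and \ref{s:aux-est-gen} to control the boundary flux terms; it is not a routine energy estimate.

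The paper avoids this issue entirely by invoking Schauder's fixed point theorem rather than Banach's. One defines $\mathcal{F}:v\mapsto u$ (transport along the flow of $v$, then the elliptic solve), shows that for suitable $B_0,B_1$ and $\tilde T$ small the set $C_{B_0,B_1,\tilde T}\subset L^{\infty}_tW^{2n,\infty}_x\cap W^{1,\infty}_tH^{n}_x$ is stable under $\mathcal{F}$, that this set is compact for $\Vert\cdot\Vert_{L^{\infty}_tW^{1,\infty}_x}$ (Ascoli, using the uniform $W^{1,\infty}_tH^{n}_x$ bound), and that $\mathcal{F}$ is continuous for that norm. No contraction, hence no difference estimate, is needed. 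If you want to keep an iteration-based write-up, you should either switch to this compactness argument or state precisely in which negative-order norm your differences contract and how the boundary terms arising there are controlled.
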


The proof of Theorem \ref{t:sec} follows the lines of the one of Theorem 1 in \cite{Perrollaz}.
We give the sketch of the proof in Appendix \ref{s:app-exist} and refer to the article \cite{Perrollaz} for the detail.
As for the 3D Euler equation, the proof of existence only constructs solutions on small time intervals (see \cite{Bertozzi-Majda}).

Assuming however the existence of a solution on a given interval $[0,T]$, this solution is unique.

\begin{thm}\label{t:third}
Let $(y^1,y_r^1,y_l^1)$ and $(y^2,y_r^2,y_l^2)$ be two solutions in the sense of Definition \ref{d:weaksol-gen} with the same initial and boundary conditions
$(y_0,\mathbf{v_l},\mathbf{v_r},y_r^c,y_l^c)$.
Then 
$$(y^1,y_r^1,y_l^1)=(y^2,y_r^2,y_l^2)$$
on the interval $[0,T]$.
\end{thm}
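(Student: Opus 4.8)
The plan is to run the relative-energy method behind Theorem~\ref{t:main}, now adapted to the order-$2n$ operator $\An$. Set $\tv:=v^1-v^2$ and $\ty:=y^1-y^2$. Because the two solutions share the boundary data $(\mathbf{v_l},\mathbf{v_r})$, Definition~\ref{d:syst-v-y-ell-gen} forces $\partial_x^i\tv(t,0)=\partial_x^i\tv(t,1)=0$ for every $i\in[\![0,n-1]\!]$ and every $t$; hence $\tv(t,\cdot)\in H^n_0(0,1)$ and $\ty=\An\tv$. The natural relative energy is
\begin{equation*}
E(t):=\tfrac12\langle\ty(t,\cdot),\tv(t,\cdot)\rangle_{L^2}=\tfrac12\langle\An\tv,\tv\rangle=\tfrac12\sum_{k=0}^{n}\Vert\partial_x^k\tv(t,\cdot)\Vert_{L^2}^2=\tfrac12\Vert\tv(t,\cdot)\Vert_{H^n}^2,
\end{equation*}
the boundary terms in the integrations by parts vanishing precisely because $\tv\in H^n_0$. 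In particular $E$ is coercive, so it suffices to show $E\equiv0$: this gives $\tv\equiv0$, hence $\ty=\An\tv\equiv0$ and, matching traces, $(y_r^1,y_l^1)=(y_r^2,y_l^2)$.

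Next I would differentiate $E$. Using that $\An$ is symmetric on $H^n_0$ one gets formally $E'(t)=\langle\partial_t\ty,\tv\rangle$; rigorously this has to be read through the weak formulation \eqref{e:weakform-CH_gen}, testing against $\tv$ times a time cut-off and passing to the limit, which is legitimate since $\ty\in C^0([0,T],L^2)$ and $\tv\in C^0([0,T],H^n_0)$. Subtracting the two equations \eqref{e:trans-fort-strech-gen} gives
\begin{equation*}
\partial_t\ty+v^1\partial_x\ty+\tv\,\partial_x y^2=-2\ty\,\partial_x v^1-2y^2\,\partial_x\tv,
\end{equation*}
so $E'(t)$ is a sum of four pairings against $\tv$. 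Integrating $-\langle v^1\partial_x\ty,\tv\rangle$ and $-\langle\tv\,\partial_x y^2,\tv\rangle=-\int_0^1(\partial_x y^2)\tv^2$ by parts (no boundary contribution, since $\tv$ vanishes at $x=0,1$), the two terms carrying an \emph{uncontrolled} derivative of the merely-$L^\infty$ momentum combine with $-2\langle y^2\partial_x\tv,\tv\rangle$ and cancel exactly. This cancellation, built into the conservative structure of the stretching nonlinearity, is what lets the estimate close with $y^1,y^2\in L^\infty$ only.

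What survives are pairings of the form $\langle\An\tv,\,v^1\partial_x\tv\rangle$ and $\langle\An\tv,\,\partial_x v^1\,\tv\rangle$. Expanding $\langle\An\tv,w\rangle=\sum_{k=0}^{n}\int_0^1\partial_x^k\tv\,\partial_x^k w$ and using the Leibniz rule, every bulk term is of order at most $2n$ in $\tv$ and bounded by $CE$ thanks to the $W^{2n,\infty}$ control of $v^1$ from Lemma~\ref{l:ell-reg-gen}, \emph{except} the top contribution $\int_0^1 v^1\partial_x^n\tv\,\partial_x^{n+1}\tv$. Integrating it by parts produces the benign bulk term $-\tfrac12\int_0^1\partial_x v^1\,(\partial_x^n\tv)^2\lesssim E$ plus the boundary remainder $\tfrac12\big[v^1(\partial_x^n\tv)^2\big]_0^1=\tfrac12 v_r\,(\partial_x^n\tv(t,1))^2-\tfrac12 v_l\,(\partial_x^n\tv(t,0))^2$. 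Note that only the top-order traces survive: for $k\le n-1$ the analogous boundary terms vanish because $\tv\in H^n_0$ kills $\partial_x^k\tv$ at the endpoints. Thus $E$ itself satisfies $E'\le CE+\tfrac12[v^1(\partial_x^n\tv)^2]_0^1$.

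Controlling this boundary remainder is the main obstacle and the genuinely new difficulty compared with $n=1$. Since $v_l,v_r$ vanish only on a finite set, I would split $[0,T]$ into finitely many subintervals on which neither changes sign. On each, the endpoint where the flux \emph{enters} ($v_l>0$ or $v_r<0$) contributes a boundary term with the favourable sign, which is simply dropped; the endpoint where the flux \emph{exits} contributes an unfavourable term, which I would absorb by augmenting $E$ with weighted boundary contributions in $|\partial_x^n\tv(t,0)|^2$ and $|\partial_x^n\tv(t,1)|^2$ and controlling the evolution of these traces by means of the multipliers $\tval,\tvar$ furnished by suitable auxiliary elliptic systems. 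The aim is a modified functional $\tilde E$, still coercive, with $\tilde E'\le C\tilde E$. A Grönwall argument on each subinterval, started from $\tilde E=0$ at its left endpoint — which holds at $t=0$ since $y_0^1=y_0^2$ forces $\tv(0,\cdot)=0$ and hence all its traces vanish — propagates $\tv\equiv0$ across the subinterval, and iterating over the finite partition yields $\tv\equiv0$ on all of $[0,T]$. The delicate point is the precise construction of $\tval,\tvar$ and of the weights, ensuring that $\tilde E$ remains equivalent to $\Vert\tv\Vert_{H^n}^2$ plus the retained boundary traces while the cross terms stay absorbable.
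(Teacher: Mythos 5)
Your overall architecture (relative energy for $\Vert\tv\Vert_{H^n}^2$, boundary-flux control by auxiliary elliptic multipliers, Gronwall on the finitely many subintervals where $v_l,v_r$ keep a fixed sign) is the one the paper follows, and the preparatory observations ($\tv(t,\cdot)\in H^n_0$, $\ty=\An\tv$, the exact cancellation of the terms carrying derivatives of the $L^\infty$ momentum) are correct. But there is a sign error with structural consequences. When you expand $\langle\An\tv,v^1\partial_x\tv\rangle$ you drop all boundary terms of the iterated integration by parts on the grounds that $\tv\in H^n_0$. That kills the terms $[\B_i(\tv)\,\S_i(v^1\partial_x\tv)]_0^1$ only for $i\le n-2$: for $i=n-1$ the conormal operator $\B_{n-1}=-\partial_x^n$ has order $n$ and is \emph{not} annihilated by $\tv\in H^n_0$, while $\S_{n-1}(v^1\partial_x\tv)$ reduces at the endpoints to $v^1\partial_x^n\tv$. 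The surviving term $-[v^1(\partial_x^n\tv)^2]_0^1$ combines with the $+\tfrac12[v^1(\partial_x^n\tv)^2]_0^1$ you do retain to give a net boundary remainder $-\tfrac12[v^1(\partial_x^n\tv)^2]_0^1$ (compare the case $n=1$ with \eqref{e:energ-eq-prf3}). Consequently the favourable/unfavourable roles are exactly the opposite of what you state: the \emph{exiting} fluxes carry the good sign and may be dropped, whereas the \emph{entering} fluxes ($v_l>0$ at $x=0$, $v_r<0$ at $x=1$) are the dangerous terms — which is also what one expects physically, and is where the paper aims its auxiliary multipliers.

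The second gap is that the step you defer — ``the precise construction of $\tval,\tvar$ and of the weights'' — is the actual content of the higher-order proof, and the plan you sketch for it (augment $E$ by weighted traces $|\partial_x^n\tv(t,0)|^2$, $|\partial_x^n\tv(t,1)|^2$ and track their evolution) is precisely the $n=1$ strategy that the paper explains does \emph{not} extend to $n\ge2$ (Remark \ref{r:diff-proof}): the boundary data of the dual elliptic problem are the conormal derivatives $\B_i(\tv)$, of order up to $2n-1$, which are controlled neither by $|\partial_x^n\tv|$ at the boundary nor by $\Vert\tv\Vert_{H^n}$. The paper's resolution is to take as auxiliary quantity not the traces but $\Vert\tvaln\Vert_{H^n}^2$, where $\tvaln$ solves the homogeneous Zaremba problem $\An\tvaln=0$ with $\B_i(\tvaln)(\cdot,0)=-\B_i(\tv)(\cdot,0)$ and $\S_i(\tvaln)(\cdot,1)=0$, to use $\tvaln$ itself as test function in the weak formulation, and to absorb the resulting cross term at the outgoing end by a Rellich-type trace estimate $|\partial_x^n\tvaln(t,1)|\lesssim\Vert\tvaln\Vert_{H^n}$ (Proposition \ref{p:partialxntvaln1}); this is also why Theorem \ref{t:third} is only a uniqueness statement rather than a stability estimate. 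As written, your argument therefore has a gap both in the inequality it starts from and in the mechanism it relies on to close it.
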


\begin{rem}
If we compare Theorem \ref{t:main} and Theorem \ref{t:third} in the case $n=1$, the later does not have the hypothesis that $v_l$ and $v_r$ have to be $H^1$ in time, but it does not give quantitative estimates on the derivative of the flux terms at the boundary. 
\end{rem}

\subsection{Sketch of the proofs}

To prove theorem \ref{t:main}, we take two solution  $(y^1,y_r^1,y_l^1)$ and $(y^2,y_r^2,y_l^2)$ with possibly different initial and boundary values,
and we analyze the dynamics time evolution on the $H^1_x$ norm of $v^1-v^2$,
where $v^1$ and $v^2$ refer to the solutions of the elliptic problems \eqref{e:syst-v-y-ell-gen} with respectively $y^1$ and $y^2$ instead of $y$.

The sketch of the proof is the following.
In Paragraph \ref{s:energ-est}, we provide an analogous energy estimate for the relative energy between two solutions.
In Paragraph \ref{s:aux-est}, we seek to control the \textit{entering energy fluxes} which where the bad boundary terms (meaning that they cannot be discarded due to their sign and are not trivially bounded by the relative energy)  in the relative energy inequality of Paragraph \ref{s:energ-est}.
To that extent, we introduce an auxiliary test function constructed as a well-chosen elliptic multiplier of the equation.
In Paragraph \ref{s:gron}, we conclude the proof with the help of a Gronwall argument.

The proof of Theorem \ref{t:third} is similar in its structure :
in Paragraph \ref{s:energ-est-gen}, we derive a relative energy inequality, 
then in Paragraph \ref{s:aux-est-gen}, we control the entering fluxes of the relative energy inequality.

\begin{rem} \label{r:diff-proof}
The proof of Theorem \ref{t:third} is similar to the proof of Theorem \ref{t:main}.
However, it has its own difficulties.
Heuristically, in the proof of Theorem \ref{t:main}, there is an Energy Inequality, looking like
\begin{equation*}
\tfrac{d}{dt}(\mathrm{Energy})
    + \mathrm{Energy Fluxes} 
    \leq Cste \, \mathrm{Energy},
\end{equation*}
and an Auxiliary Inequality
\begin{equation*}
\tfrac{d}{dt}(\mathrm{Energy Fluxes})
    \leq Cste \, (\mathrm{Energy} + \mathrm{Energy Fluxes}).
\end{equation*}
Combining the two of them we get 
\begin{equation*}
\tfrac{d}{dt}(\mathrm{Energy}+\mathrm{Energy Fluxes})
   \leq Cste \, (\mathrm{Energy}+\mathrm{Energy Fluxes}),
\end{equation*}
which allows us to conclude through the help of Gronwall's Lemma.
In the proof of Theorem \ref{t:third}, the Energy Inequality still looks like 
\begin{equation*}
\tfrac{d}{dt}(\mathrm{Energy})
    + \mathrm{ExitingFluxes} 
    - \mathrm{EnteringFluxes}
    \leq Cste \, \mathrm{Energy},
\end{equation*}
but the Auxiliary Inequality is more of the form
\begin{equation*}
\tfrac{d}{dt}(\mathrm{Auxiliary})
    + \mathrm{EnteringFluxes}
    \leq Cste \, (\mathrm{Energy} + \mathrm{Auxiliary}).
\end{equation*}
Combining the two of them we get 
\begin{equation*}
\tfrac{d}{dt}(\mathrm{Energy}+\mathrm{Auxiliary})
    + |\mathrm{Energy Fluxes}|
   \leq Cste \, (\mathrm{Energy}+\mathrm{Auxiliary}),
\end{equation*}
which still allows us to conclude through the help of Gronwall's Lemma.

The proof is quite different because of the construction of the auxiliary test function.
We solve the same dual elliptic problem to construct it, but in the case of Camassa-Holm,
the boundary data for this elliptic problem are bounded by the energy fluxes.
This is not true in the higher-order case.
That is the main reason why the proof for Camassa-Holm is easier and stronger,
i.e. it gives bounds on the derivative of the entering flux. 
\end{rem}

\begin{rem}
In the case of periodic boundary condition, the Camassa-Holm equation was proved to be locally well-posed in $H^s(0,1)$ for $1\leq s <2$, for any initial data $v_0\in W^{1,\infty}$ (see  \cite{DKT-CH}).
To define weak solutions for which the momentum $y$ is not $L^{\infty}$ or even $L^1$, one writes the equation as
\begin{subequations}
\label{e:CH-pressure}
\begin{align}
\partial_t v + v\partial_xv &= \partial_x P, \\
P &= (1-\partial_x^2)^{-1}\left(v^2+\tfrac{(\partial_xv)^2}{2}\right).
\end{align}
\end{subequations}
This is easier to define in the case of periodic boundary conditions as one does not need additional boundary condition for $P$.
To go into lower regularity than what we do in this article (meaning solution with bounded momentum), one could explore this formulation of the equation and in particular, ask ourselves which boundary conditions are needed for it to make sense.
\end{rem}

\begin{rem}
One can also remark that a formulation similar to \eqref{e:CH-pressure} exists for the higher order Camassa-Holm system \eqref{e:CHgen-fort}.
One can recast this system as :
\begin{subequations}
\begin{align}
\partial_t v + v\partial_x v &= \partial_x P, \\
A_n(P) &= \mathcal{F}_n[v]
\end{align}
\end{subequations}
where $\mathcal{F}_n[v]$ is a differential polynomial in $v$ depending on $n$ that we will not describe here.

It is using this formulation as well as elliptic regularization of the equation also that Coclite, Holden, and Karlsen tackled the existence of a solution for the higher order Camassa-Holm system on the circle (see \cite{Coc-Hol-karl2}).
They also obtained a weak-strong uniqueness result.
\end{rem}

\begin{rem}
The problem of stability estimates on the whole interval when the flux $v_r$ or $v_l$ changes sign is still open. 
\end{rem}

\section{Proof of Theorem \ref{t:main}}
\subsection{Energy estimate for the difference of two solutions}\label{s:energ-est}

Let us take two weak solutions $(y^1,y_r^1,y_l^1)$ and $(y^2,y_r^2,y_l^2)$ of the Camassa-Holm equation with initial and boundary conditions $(y_0^1,v_l,v_r,y_r^{1,c},y_l^{1,c})$ and $(y_0^2,v_l,v_r,y_r^{2,c},y_l^{2,c})$.
We define the following functions 
\begin{align}
\ty   &:= y^1-y^2,                  &\tv  := v^1-v^2,             \\
\hy   &:= \frac{y^1+y^2}{2},        &\hv  := \frac{v^1+v^2}{2},   \\
\tyl  &:= y_l^1-y_l^2,              &\tyr := y_r^1-y_r^2,            
\end{align}
where the functions $v^1$ and $v^2$ are given through \eqref{e:v-y-link}. \\

We take the difference of Equation \eqref{e:weakform-CH} for the solutions $y^1$ and $y^2$.
The function $\ty$ verifies the following equality for all $0\leq t_0\leq t_1 <T$ and for all test function $\phi\in H^1([t_0,t_1]\times[0,1])$:
\begin{align}
\notag
\int_{t_0}^{t_1}&{\int_{0}^{1}{\big(\ty\,\partial_t \phi+ (\ty\hv+\hy\tv)\,\partial_x \phi - (\ty\partial_x \hv+\hy\partial_x\tv)\phi\big)}}        \\
\label{e:weakform-CH-diff}
    &= \int_{t_0}^{t_1}{\tyr v_r \phi(\cdot,1)} - \int_{t_0}^{t_1}{\tyl v_l \phi(\cdot,0)}
    +\int_{0}^{1}{\phi(t_1,\cdot)\ty(t_1,\cdot)}-\int_{0}^{1}{\phi(t_0,\cdot)\ty(t_0,\cdot)}.
\end{align}

Furthermore the functions $\tv$ and $\hv$ are solutions of the following elliptic problems:
\begin{subequations} \label{e:syst-ell-tv-ty}
\begin{align} \label{e:ell-tv-ty}
&(1-\partial_x^2)\tv = \ty           \text{ on } (0,1), 
&(1-\partial_x^2)\hv = \hy - \kappa  \text{ on } (0,1), \\
&\tv_{|x=0} = 0, 
&\hv_{|x=0} = v_l, \\
&\tv_{|x=1} = 0,
&\hv_{|x=1} = v_r.
\end{align}
\end{subequations}
With that in mind, we prove the following lemma.

\begin{lem}\label{l:tv-time-reg}
The functions $\hv$ and $\tv$ lie in $L^{\infty}([0,T],W^{2,\infty}([0,1]))$.
Moreover, the function $\tv$ lies in $W^{1,\infty}([0,T],H^1([0,1]))$.
\end{lem}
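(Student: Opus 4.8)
The plan is to treat the two assertions separately: the spatial $W^{2,\infty}$-regularity follows directly from elliptic estimates applied time-slice by time-slice, while the temporal $H^1$-regularity of $\tv$ comes from differentiating the elliptic relation in time and exploiting the smoothing of the inverse operator.

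For the first assertion, I would apply the elliptic regularity estimate of Lemma~\ref{l:ell-reg-gen} in the case $n=1$ to each of the systems \eqref{e:syst-ell-tv-ty}. Since $y^1,y^2\in L^\infty(\Omega_T)$, both $\ty$ and $\hy$ belong to $L^\infty(\Omega_T)$, and since $v_l,v_r\in H^1(0,T)\hookrightarrow C^0([0,T])$ they are bounded on $[0,T]$. Freezing the time variable, the estimate gives, for a.e.\ $t$,
\begin{equation*}
\|\tv(t,\cdot)\|_{W^{2,\infty}}\lesssim \|\ty(t,\cdot)\|_{L^\infty},\qquad
\|\hv(t,\cdot)\|_{W^{2,\infty}}\lesssim \|\hy(t,\cdot)\|_{L^\infty}+|v_l(t)|+|v_r(t)|,
\end{equation*}
whose right-hand sides are bounded uniformly in $t$. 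This yields $\tv,\hv\in L^\infty([0,T],W^{2,\infty}([0,1]))$ (alternatively one reads this off the explicit formula \eqref{e:v-y-link}).

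For the second assertion, I would first record the strong form of \eqref{e:weakform-CH-diff}: testing against $\phi(t,x)=\theta(t)\psi(x)$ with $\theta\in C_c^\infty$ and $\psi\in H^1_0(0,1)$ annihilates all boundary terms and shows that, in the sense of distributions on $(0,T)$ with values in $H^{-1}(0,1)$,
\begin{equation*}
\partial_t\ty=-\partial_x(\ty\hv+\hy\tv)-(\ty\partial_x\hv+\hy\partial_x\tv).
\end{equation*}
By the first part, $\ty\hv+\hy\tv\in L^\infty(\Omega_T)\subset L^\infty([0,T],L^2)$ and $\ty\partial_x\hv+\hy\partial_x\tv\in L^\infty(\Omega_T)$, so the right-hand side lies in $L^\infty([0,T],H^{-1}(0,1))$; hence $\partial_t\ty\in L^\infty([0,T],H^{-1}(0,1))$.

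Finally, let $\mathcal{G}:=(1-\partial_x^2)^{-1}$ denote the solution operator of the homogeneous Dirichlet problem, a fixed (time-independent) bounded isomorphism $H^{-1}(0,1)\to H^1_0(0,1)$. Since $\tv(t,\cdot)=\mathcal{G}\,\ty(t,\cdot)$ by \eqref{e:ell-tv-ty} together with the \emph{time-independent, homogeneous} boundary conditions $\tv_{|x=0}=\tv_{|x=1}=0$, commuting $\partial_t$ with the fixed bounded operator $\mathcal{G}$ gives $\partial_t\tv=\mathcal{G}\,\partial_t\ty\in L^\infty([0,T],H^1_0(0,1))$, which is exactly the claim $\tv\in W^{1,\infty}([0,T],H^1([0,1]))$. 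The main obstacle is that $\partial_t\ty$ is no better than $H^{-1}$ in space, since it carries the unbounded derivative $\partial_x(\ty\hv+\hy\tv)$; the argument succeeds only because the two-derivative gain of $\mathcal{G}$ compensates for this loss, and because the boundary condition for $\tv$ is homogeneous and time-independent so that $\partial_t\tv$ inherits the Dirichlet condition. This is precisely why the statement is claimed for $\tv$ and not for $\hv$, whose non-homogeneous, time-dependent boundary data would survive under $\partial_t$.
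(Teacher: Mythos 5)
Your proof is correct and is essentially the paper's argument: the spatial part is the same elliptic estimate (the paper reads it off the explicit formula \eqref{e:v-y-link}), and the temporal part rests on the same two ingredients --- the weak formulation paired against $H^1_0(0,1)$ functions gives Lipschitz-in-time control of $\ty$ with values in $H^{-1}(0,1)$, and the homogeneous Dirichlet solution operator of $1-\partial_x^2$ gains two derivatives. The paper merely packages this with the single concrete test function $a_{t_0}^{t_1}=\tv(t_1,\cdot)-\tv(t_0,\cdot)$ and a difference-quotient computation in place of your abstract commutation $\partial_t\tv=\mathcal{G}\,\partial_t\ty$.
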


\begin{proof}
To obtain the space regularity of $\hv$ and $\tv$, remark that the primitives $Y^1$ and $Y^2$, defined as in \ref{e:primitives} with $y^1$ and $y^2$ instead of $y$, are $L^{\infty}([0,T],W^{1,\infty}(0,1))$.
Then use the formula \eqref{e:v-y-link}. 

Let us now prove the time regularity of $\tv$.
Let us fix two times $t_0<t_1$, and denote 
$$a_{t_0}^{t_1}(x):=\tv(t_1,x)-\tv(t_0,x).$$
Recalling that $\tv$ verifies \eqref{e:ell-tv-ty} with homogeneous boundary conditions, we obtain that
\begin{equation*}
\int_{0}^{1}{|\partial_x a_{t_0}^{t_1}|^2}
    =-\int_{0}^{1}{\big(a_{t_0}^{t_1}-(\ty(t_1,\cdot)-\ty(t_0,\cdot))\big)a_{t_0}^{t_1}},   
\end{equation*}
which can be rewritten as
\begin{equation*}
\int_{0}^{1}{|\partial_x a_{t_0}^{t_1}|^2}
    +\int_{0}^{1}{|a_{t_0}^{t_1}|^2}
    =\int_{0}^{1}{(\ty(t_1,\cdot)-\ty(t_0,\cdot))a_{t_0}^{t_1}} \, .   
\end{equation*}
Hence, we get the inequality
\begin{equation}\label{e:a-H1norm}
\Vert a_{t_0}^{t_1}\Vert_{H^1(0,1)}^2 
    \leq  \left\vert\int_{0}^{1}{(\ty(t_1,\cdot)-\ty(t_0,\cdot))a_{t_0}^{t_1}}\right\vert.
\end{equation}
Using \eqref{e:weakform-CH-diff} with $a_{t_0}^{t_1}$ instead of $\phi$ (considered as a function constant in time), we obtain that:
\begin{equation}\label{e:a-test-func}
\int_{0}^{1}{(\ty(t_1,\cdot)-\ty(t_0,\cdot))a_{t_0}^{t_1}}
    = \int_{t_0}^{t_1}{\int_{0}^{1}{\big((\ty\hv+\hy\tv)\partial_x a_{t_0}^{t_1} - (\ty\partial_x \hv+\hy\partial_x\tv)a_{t_0}^{t_1}\big)}}.
\end{equation}
Combining \eqref{e:a-H1norm} and \eqref{e:a-test-func}, we get that:
\begin{equation*}
\Vert a_{t_0}^{t_1}\Vert_{H^1(0,1)}
    \leq |t_1-t_0|\big(\Vert\ty\Vert_{L^{\infty}([0,T]\times[0,1])}\Vert\hv\Vert_{L^{\infty}([0,T],H^1(0,1))}
    +\Vert\hy\Vert_{L^{\infty}([0,T]\times[0,1])}\Vert\tv\Vert_{L^{\infty}([0,T],H^1(0,1))}\big)
\end{equation*}
Recalling that $a_{t_0}^{t_1}(x)=\tv(t_1,x)-\tv(t_0,x)$, we conclude that $\tv\in W^{1,\infty}([0,T],H^1([0,1]))$.
\end{proof}

Now that we have Lemma \ref{l:tv-time-reg}, we prove the following relative energy equality :

\begin{prop}\label{p:energ-eq}
For all $0\leq t_0<t_1\leq T$, we have the following equality:
\begin{align}
\notag
\Vert\tv(t_1,\cdot)\Vert_{H^1(0,1)}^2
    &-\Vert\tv(t_0,\cdot)\Vert_{H^1(0,1)}^2
    +\int_{t_0}^{t_1}{|\partial_x \tv(t,1)|^2v_r} 
    -\int_{t_0}^{t_1}{|\partial_x \tv(t,0)|^2v_l} \\
\label{e:energ-eq}
    &+\int_{t_0}^{t_1}{\int_{0}^{1}{\left(3|\tv|^2+|\partial_x \tv|^2\right)\partial_x\hv}}
    +\int_{t_0}^{t_1}{\int_{0}^{1}{\partial_x\left(|\tv|^2\right)\left(\hv-\hy+\kappa\right)}}
    =0.
\end{align}
\end{prop}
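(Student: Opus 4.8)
The plan is to test the difference formulation \eqref{e:weakform-CH-diff} against $\phi=\tv$ itself. By Lemma \ref{l:tv-time-reg} we have $\tv\in W^{1,\infty}([0,T],H^1)\cap L^{\infty}([0,T],W^{2,\infty})$, hence $\tv\in H^1([t_0,t_1]\times[0,1])$ is an admissible test function. Because $\tv$ carries the homogeneous boundary conditions $\tv(\cdot,0)=\tv(\cdot,1)=0$ coming from \eqref{e:syst-ell-tv-ty}, the two entering-flux terms $\int_{t_0}^{t_1}\tyr v_r\tv(\cdot,1)$ and $\int_{t_0}^{t_1}\tyl v_l\tv(\cdot,0)$ on the right-hand side vanish at once, leaving only the two temporal endpoint contributions $\int_0^1\tv(t_i,\cdot)\ty(t_i,\cdot)$.

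First I would convert every pairing of $\ty$ against $\tv$ into the energy through the elliptic relation $(1-\partial_x^2)\tv=\ty$: integrating by parts in $x$ and using $\tv=0$ at the endpoints gives $\int_0^1\ty\,\tv=\Vert\tv\Vert_{H^1}^2$ pointwise in time, so the endpoint terms become $\Vert\tv(t_1,\cdot)\Vert_{H^1}^2-\Vert\tv(t_0,\cdot)\Vert_{H^1}^2$. For the time-derivative term I would use that $t\mapsto\Vert\tv(t)\Vert_{H^1}^2$ is Lipschitz (from the $W^{1,\infty}_tH^1_x$ regularity) and that $\partial_t\tv$ inherits the homogeneous trace $\partial_t\tv(\cdot,0)=\partial_t\tv(\cdot,1)=0$; the same integration by parts then yields $\int_{t_0}^{t_1}\int_0^1\ty\,\partial_t\tv=\tfrac12\big(\Vert\tv(t_1)\Vert_{H^1}^2-\Vert\tv(t_0)\Vert_{H^1}^2\big)$. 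Substituting these two identities into the tested equation and solving for the energy gives $\Vert\tv(t_1)\Vert_{H^1}^2-\Vert\tv(t_0)\Vert_{H^1}^2=2(I_2-I_3)$, where $I_2$ and $I_3$ abbreviate the two remaining interior integrals; a pleasant simplification here is that all terms carrying the average momentum $\hy$ cancel, leaving $I_2-I_3=\int_{t_0}^{t_1}\int_0^1\ty\,(\hv\partial_x\tv-\tv\partial_x\hv)$.

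The heart of the argument, and the step demanding the most care, is the spatial integration by parts in $I_2-I_3$ after substituting $\ty=\tv-\partial_x^2\tv$. The contribution $\int_0^1\tv(\hv\partial_x\tv-\tv\partial_x\hv)$ integrates, using $\tv=0$ at the endpoints, to $-\tfrac32\int_0^1|\tv|^2\partial_x\hv$, which will produce the $3|\tv|^2\partial_x\hv$ term. The contribution $-\int_0^1\partial_x^2\tv\,(\hv\partial_x\tv-\tv\partial_x\hv)$ is where the boundary fluxes are born: writing $-\int_0^1\hv\,\partial_x^2\tv\,\partial_x\tv=-\tfrac12\int_0^1\hv\,\partial_x(|\partial_x\tv|^2)$ and integrating by parts leaves the surviving boundary term $-\tfrac12\big[\hv|\partial_x\tv|^2\big]_0^1$, which by $\hv(\cdot,0)=v_l$ and $\hv(\cdot,1)=v_r$ becomes exactly half of $-v_r|\partial_x\tv(\cdot,1)|^2+v_l|\partial_x\tv(\cdot,0)|^2$; it is essential that $\partial_x\tv$ need not vanish at the boundary even though $\tv$ does. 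Collecting the interior pieces yields the $|\partial_x\tv|^2\partial_x\hv$ term, and the only leftover is $-\tfrac12\int_0^1\partial_x(|\tv|^2)\,\partial_x^2\hv$.

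Finally I would invoke the elliptic equation for $\hv$ in \eqref{e:syst-ell-tv-ty}, namely $\partial_x^2\hv=\hv-\hy+\kappa$, to rewrite this leftover as $-\tfrac12\int_0^1\partial_x(|\tv|^2)(\hv-\hy+\kappa)$. Multiplying the identity $\Vert\tv(t_1)\Vert_{H^1}^2-\Vert\tv(t_0)\Vert_{H^1}^2=2(I_2-I_3)$ through and rearranging then reproduces \eqref{e:energ-eq} term by term. The main obstacle is purely the bookkeeping of the integrations by parts: keeping track of which ones generate surviving boundary terms — only those differentiating $\partial_x\tv$ at the endpoints do — and checking at each step that the regularity furnished by Lemma \ref{l:tv-time-reg} legitimizes the manipulation, since $\ty$ is merely bounded while $\tv,\hv\in W^{2,\infty}_x$ carry the smoothness that makes all products integrable.
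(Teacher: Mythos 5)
Your proposal is correct and follows essentially the same route as the paper: test \eqref{e:weakform-CH-diff} with $\phi=\tv$ (legitimized by Lemma \ref{l:tv-time-reg}), convert the pairings of $\ty$ against $\tv$ and $\partial_t\tv$ into $H^1$ norms via $(1-\partial_x^2)\tv=\ty$, and integrate the remaining bilinear terms by parts, with the only surviving boundary contribution $-\tfrac12[\hv|\partial_x\tv|^2]_0^1$ producing the flux terms and $\partial_x^2\hv=\hv-\hy+\kappa$ producing the last interior integral. The sign and coefficient bookkeeping all check out against \eqref{e:energ-eq}; the only cosmetic difference is that you treat the two trilinear terms jointly where the paper splits them.
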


\begin{proof}
Thanks to Lemma \ref{l:tv-time-reg}, we can take $\tv$ as a test function in \eqref{e:weakform-CH-diff}, which we do.
For all $t_0<t_1$, we have that
\begin{align}
\notag
\int_{t_0}^{t_1}&{\int_{0}^{1}{\big(\ty\,\partial_t \tv+ \ty\hv\partial_x \tv - \ty\tv\partial_x \hv\big)}}        \\
\label{e:weakform-tv-test1}
    &= \int_{t_0}^{t_1}{\tyr v_r \tv(\cdot,1)} -\int_{t_0}^{t_1}{\tyl v_l \tv(\cdot,0)}
    +\int_{0}^{1}{\tv(t_1,\cdot)\ty(t_1,\cdot)}-\int_{0}^{1}{\tv(t_0,\cdot)\ty(t_0,\cdot)}.
\end{align}
We cancel the boundary terms, because $\tv_{|x = 0} = \tv_{|x = 1} = 0$, to get
\begin{equation}\label{e:weakform-tv-test2}
\int_{t_0}^{t_1}{\int_{0}^{1}{\big(\ty\,\partial_t\tv+ \ty\hv\partial_x\tv - \ty\partial_x\hv\tv\big)}}
    = \int_{0}^{1}{\tv(t_1,\cdot)\ty(t_1,\cdot)}-\int_{0}^{1}{\tv(t_0,\cdot)\ty(t_0,\cdot)}.
\end{equation}
Now, we reformulate each term of \eqref{e:weakform-tv-test2} by using some integration by parts as well as \eqref{e:syst-ell-tv-ty}.
\begin{itemize}
\item
First let us look at $\int_{0}^{1}{\tv(t,\cdot)\ty(t,\cdot)}$ (which will be used for $t=t_0$ and $t=t_1$)
\begin{align}
\notag
\int_{0}^{1}{\tv(t,\cdot)\ty(t,\cdot)} 
    &= \int_{0}^{1}{\tv(1-\partial_x^2)\tv} \\
\notag
    &= \int_{0}^{1}{|\tv|^2}                
      +\int_{0}^{1}{|\partial_x\tv|^2}      \\
\label{e:energ-eq-prf1}
    &=\Vert\tv(t,\cdot)\Vert_{H^1(0,1)}^2.
\end{align}

\item 
The term $\int_{t_0}^{t_1}{\int_{0}^{1}{\ty\,\partial_t \tv}}$ is dealt with similarly 
\begin{equation}\label{e:energ-eq-prf2}
\int_{t_0}^{t_1}{\int_{0}^{1}{\ty\,\partial_t \tv}} 
    =\frac{1}{2}\left(\Vert\tv(t_1,\cdot)\Vert_{H^1(0,1)}^2-\Vert\tv(t_0,\cdot)\Vert_{H^1(0,1)}^2\right).
\end{equation}

\item 
Now, we deal with the two bilinear terms.
Let us recast the first one:
\begin{align}
\notag
\int_{t_0}^{t_1}{\int_{0}^{1}{\ty\hv\partial_x \tv}}
    &= \int_{t_0}^{t_1}{\int_{0}^{1}{(1-\partial_x^2)\tv \hv\partial_x \tv}} \\
\notag 
    &= \frac{1}{2}\int_{t_0}^{t_1}{\int_{0}^{1}{\partial_x\left(|\tv|^2-|\partial_x \tv|^2\right)\hv}} \\
\label{e:energ-eq-prf3}
    &= -\frac{1}{2}\int_{t_0}^{t_1}{\int_{0}^{1}{\left(|\tv|^2-|\partial_x \tv|^2\right)\partial_x\hv}} 
     -\frac{1}{2}\int_{t_0}^{t_1}{\left(|\partial_x \tv(t,1)|^2v_r-|\partial_x \tv(t,0)|^2v_l\right)}.
\end{align}

\item
Let us reformulate the second one:
\begin{align}
\notag
\int_{t_0}^{t_1}{\int_{0}^{1}{\ty\tv\partial_x \hv}}
    &= \int_{t_0}^{t_1}{\int_{0}^{1}{(1-\partial_x^2)\tv \tv\partial_x \hv}} \\
\label{e:energ-eq-prf4}
    &= \int_{t_0}^{t_1}{\int_{0}^{1}{(|\tv|^2\partial_x \hv + |\partial_x \tv|^2\partial_x \hv +\tv\partial_x\tv \partial_x^2\hv)}},
\end{align}
and we use \eqref{e:ell-tv-ty} to get rid of the second order derivative:
\begin{equation}\label{e:energ-eq-prf5}
\int_{t_0}^{t_1}{\int_{0}^{1}{\tv\partial_x\tv \partial_x^2\hv}} 
    =\tfrac{1}{2}\int_{t_0}^{t_1}{\int_{0}^{1}{\partial_x\left(|\tv|^2\right)\left(\hv-\hy+\kappa\right)}}.
\end{equation}
\end{itemize}

By combining \eqref{e:weakform-tv-test2} with \eqref{e:energ-eq-prf1}, \eqref{e:energ-eq-prf2}, \eqref{e:energ-eq-prf3}, \eqref{e:energ-eq-prf4} and \eqref{e:energ-eq-prf5}, we obtain the wanted result.
\end{proof}

We deduce the following corollary.

\begin{cor}\label{c:energ-ineq}
There exists a constant $C>0$ such that for almost every $0<t<T$, we have the following inequality:
\begin{equation}\label{i:energ-ineq}
\frac{d}{dt}\left(\Vert\tv(t,.)\Vert_{H^1(0,1)}^2\right)  
    + |\partial_x \tv(t,1)|^2v_r(t) - |\partial_x \tv(t,0)|^2v_l(t)
    \leq C \Vert\tv(t,.)\Vert_{H^1(0,1)}^2.
\end{equation}
\end{cor}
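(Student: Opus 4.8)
The plan is to read the corollary off directly from the exact energy identity \eqref{e:energ-eq} of Proposition \ref{p:energ-eq}, by differentiating it in time and bounding the two interior integrals by a constant multiple of $\Vert\tv\Vert_{H^1}^2$. First I would rewrite \eqref{e:energ-eq} in the compact form $F(t_1)-F(t_0)+\int_{t_0}^{t_1}G(t)\,\mathrm{d}t=0$, valid for all $0\le t_0<t_1\le T$, with $F(t):=\Vert\tv(t,\cdot)\Vert_{H^1(0,1)}^2$ and
\begin{equation*}
G(t):=|\partial_x\tv(t,1)|^2v_r(t)-|\partial_x\tv(t,0)|^2v_l(t)+\int_0^1\big(3|\tv|^2+|\partial_x\tv|^2\big)\partial_x\hv+\int_0^1\partial_x\big(|\tv|^2\big)\big(\hv-\hy+\kappa\big).
\end{equation*}

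Next I would justify passing from this integral identity to a pointwise differential one. By Lemma \ref{l:tv-time-reg} we have $\tv\in W^{1,\infty}([0,T],H^1(0,1))\cap L^\infty([0,T],W^{2,\infty}(0,1))$, so $F$ is Lipschitz in time, and $G\in L^\infty([0,T])\subset L^1([0,T])$: the boundary traces $\partial_x\tv(t,0)$ and $\partial_x\tv(t,1)$ are uniformly bounded in $t$ by the $W^{2,\infty}$ regularity, $v_l,v_r$ are continuous hence bounded, and the two interior integrals are controlled as explained below. Since the identity exhibits $F$ as a primitive of $-G$, the function $F$ is absolutely continuous with $F'(t)=-G(t)$ for almost every $t$, which gives the pointwise equality $\tfrac{d}{dt}\Vert\tv\Vert_{H^1}^2+|\partial_x\tv(t,1)|^2v_r-|\partial_x\tv(t,0)|^2v_l=-\int_0^1(3|\tv|^2+|\partial_x\tv|^2)\partial_x\hv-\int_0^1\partial_x(|\tv|^2)(\hv-\hy+\kappa)$ for a.e.\ $t$.

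It then remains to estimate the right-hand side. For the first integral I would use that $\hv\in L^\infty([0,T],W^{2,\infty})$ (Lemma \ref{l:tv-time-reg}), so that $\partial_x\hv\in L^\infty(\Omega_T)$, giving $\big|\int_0^1(3|\tv|^2+|\partial_x\tv|^2)\partial_x\hv\big|\le 3\Vert\partial_x\hv\Vert_{L^\infty}\Vert\tv\Vert_{H^1}^2$. For the second, I would write $\partial_x(|\tv|^2)=2\tv\partial_x\tv$ and apply Young's inequality $2|\tv||\partial_x\tv|\le|\tv|^2+|\partial_x\tv|^2$, together with the fact that $\hv-\hy+\kappa\in L^\infty(\Omega_T)$ — here $\hv\in L^\infty$ by Lemma \ref{l:tv-time-reg} and $\hy=\tfrac{1}{2}(y^1+y^2)\in L^\infty(\Omega_T)$ since both momenta are bounded by Definition \ref{d:weaksol}; this yields $\big|\int_0^1\partial_x(|\tv|^2)(\hv-\hy+\kappa)\big|\le\Vert\hv-\hy+\kappa\Vert_{L^\infty}\Vert\tv\Vert_{H^1}^2$. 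Setting $C:=3\Vert\partial_x\hv\Vert_{L^\infty(\Omega_T)}+\Vert\hv-\hy+\kappa\Vert_{L^\infty(\Omega_T)}$ and combining these two bounds with the differential equality gives \eqref{i:energ-ineq}.

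The substantive work has already been done in Proposition \ref{p:energ-eq} and Lemma \ref{l:tv-time-reg}; the only genuinely delicate point here is the time-differentiation step, namely certifying that the integral identity upgrades to an a.e.\ differential inequality. This rests entirely on the time regularity $\tv\in W^{1,\infty}([0,T],H^1)$ supplied by Lemma \ref{l:tv-time-reg} (which makes $F$ absolutely continuous) and on the $L^\infty$-in-time integrability of $G$; once these are in place, the remaining interior estimates are elementary and absorb into the $C\Vert\tv\Vert_{H^1}^2$ right-hand side, the boundary flux terms being kept exactly on the left.
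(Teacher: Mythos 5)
Your proposal is correct and follows essentially the same route as the paper: both start from the exact identity of Proposition \ref{p:energ-eq}, use the regularity of Lemma \ref{l:tv-time-reg} to upgrade the integral identity to an a.e.\ differential one (you invoke the fundamental theorem of calculus for the $L^\infty$ function $G$, the paper takes difference quotients over $[t-\varepsilon,t+\varepsilon]$ and passes to the limit term by term --- the same underlying Lebesgue differentiation argument), and then bound the two interior integrals by $C\Vert\tv\Vert_{H^1}^2$ via the $L^\infty$ bounds on $\partial_x\hv$ and $\hv-\hy+\kappa$. No gap.
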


\begin{proof}
One starts from equality \ref{e:energ-eq}, with $t_0=t-\varepsilon$ and $t_1=t+\varepsilon$ for $t\in [0,T]$ and $\varepsilon>0$.

Since $\tv$ lies in $W^{1,\infty}_tH^1_x$, the fraction
\begin{equation*}
\frac{\Vert\tv(t+\varepsilon,.)\Vert_{H^1(0,1)}^2-\Vert\tv(t-\varepsilon,.)\Vert_{H^1(0,1)}^2}{\varepsilon}
\end{equation*}
converges for almost every~$t$ towards 
\begin{equation*}
\frac{d}{dt}\left(\Vert\tv(t,.)\Vert_{H^1(0,1)}^2\right).
\end{equation*}

The quantities $\partial_x\tv(\cdot,0)^2v_l$ and $\partial_x\tv(\cdot,1)^2v_r$ are both $L^{\infty}$.
Therefore, by Rademacher's Theorem, for almost every $t\in [0,T]$, the integral
\begin{equation*}
\frac{1}{\varepsilon}\int_{t-\varepsilon}^{t+\varepsilon}{\partial_x\tv(\cdot,0)^2v_l}
\end{equation*}
converges towards 
\begin{equation*}
\partial_x\tv(t,0)^2v_l(t).
\end{equation*}
Similarly
$\tfrac{1}{\varepsilon}\int_{t-\varepsilon}^{t+\varepsilon}{\partial_x\tv(\cdot,1)^2v_r}$
converges towards $\partial_x\tv(t,1)^2v_r(t)$.

Moreover, using the Cauchy-Schwarz inequality, one gets that for all $t\in [0,T]$ and for all $\varepsilon>0$
\begin{align*}
\int_{t-\varepsilon}^{t+\varepsilon}{\int_{0}^{1}{\left(3|\tv|^2+|\partial_x \tv|^2\right)\partial_x\hv}}
    &\leq 4\varepsilon
       \Vert\hv\Vert_{L^{\infty}_tW^{1,\infty}_x} 
       \Vert\tv\Vert_{L^{\infty}([t-\varepsilon,t+\varepsilon],H^1(0,1))}^2, \\
\int_{t-\varepsilon}^{t+\varepsilon}{\int_{0}^{1}{\partial_x\left(|\tv|^2\right)\left(\hv-\hy+\kappa\right)}}
    &\leq \varepsilon 
       \left(\Vert\hv\Vert_{L^{\infty}_tL^{\infty}_x} 
           + \Vert\hy\Vert_{L^{\infty}_tL^{\infty}_x}
           + \kappa\right)
       \Vert\tv\Vert_{L^{\infty}([t-\varepsilon,t+\varepsilon],H^1(0,1))}^2.
\end{align*}
\end{proof}
%


\subsection{Auxiliary inequality}\label{s:aux-est}

We define two functions $u_l$ and $u_r$ by setting for all $x\in [0,1]$
\begin{equation}
u_l(x):=-\sinh(x) + \cosh(x) \tanh(1) 
\quad \text{ and } \quad
u_r(x) := -\frac{\sinh(x)}{\cosh(1)}.
\end{equation}
They are the solutions to the non-homogeneous Zaremba-type problems:
\begin{subequations}
\begin{align}
(1-\partial_x^2)u_l  &=          (1-\partial_x^2)u_r                     = 0 ,                       \\
-\partial_x u_l (0)  &=  \,\,\,  \partial_x u_r (1)    \,\,\,\,          = 1 ,                       \\
u_l (1)              &=  \quad   u_r (0)               \quad \,          = 0.
\end{align}
\end{subequations}

We want to bound $\partial_x\tv$ at the boundary with the help of a Gronwall argument.
Let us begin by showing that $\partial_x\tv(\cdot,0)$ and $\partial_x\tv(\cdot,1)$ are a lipschitz functions.

%
\begin{lem}\label{l:tva-time-reg}
The functions $\partial_x \tv(\cdot,0)$ and $\partial_x \tv(\cdot,1)$ are lipschitz functions with respect to time.
\end{lem}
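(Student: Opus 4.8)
The plan is to leverage the explicit integral representation \eqref{e:v-y-link} for $\tv$ together with the time regularity of the relevant primitives, rather than attempting to differentiate $\tv$ in time directly. Since $\tv = v^1 - v^2$ solves the homogeneous Dirichlet elliptic problem \eqref{e:ell-tv-ty}, I would first write $\partial_x\tv(\cdot,0)$ and $\partial_x\tv(\cdot,1)$ as explicit linear functionals of $\ty$ and of the primitive $\tilde Y(t,x):=\int_0^x\ty(t,s)\,\mathrm{ds}$. Concretely, from \eqref{e:v-y-link} (specialized to the difference, where the boundary data $v_l,v_r$ cancel so only the $Y$-integral survives), one differentiates in $x$ and evaluates at $x=0$ and $x=1$; each boundary derivative becomes a bounded linear combination of $\tilde Y(t,1)=\int_0^1\ty(t,s)\,\mathrm{ds}$ and of $\int_0^1\cosh(1-s)\,\ty(t,s)\,\mathrm{ds}$, i.e. integrals of $\ty(t,\cdot)$ against fixed smooth kernels on $[0,1]$.

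The next step is to control the time increments of these functionals. For any smooth fixed test profile $\psi\in H^1([0,1])$, the quantity $t\mapsto\int_0^1\psi(x)\,\ty(t,x)\,\mathrm{dx}$ is Lipschitz in $t$: this follows by inserting $\phi(t,x)=\psi(x)$ (constant in time) into the weak formulation \eqref{e:weakform-CH-diff}, exactly as was done with $a_{t_0}^{t_1}$ in the proof of Lemma~\ref{l:tv-time-reg}. Indeed, \eqref{e:weakform-CH-diff} with such a $\phi$ gives
\begin{equation*}
\int_0^1\psi\,\big(\ty(t_1,\cdot)-\ty(t_0,\cdot)\big)
= \int_{t_0}^{t_1}\!\!\int_0^1\!\big((\ty\hv+\hy\tv)\,\partial_x\psi - (\ty\,\partial_x\hv+\hy\,\partial_x\tv)\psi\big)
+ \int_{t_0}^{t_1}\!\big(\tyr v_r\psi(1)-\tyl v_l\psi(0)\big),
\end{equation*}
and the right-hand side is bounded in absolute value by $C|t_1-t_0|$, using that $\ty,\hy\in L^\infty(\Omega_T)$, that $\tv,\hv\in L^\infty_tW^{1,\infty}_x$ by Lemma~\ref{l:tv-time-reg}, that $\tyr,\tyl,v_r,v_l\in L^\infty$, and that $\psi\in H^1$ so $\psi(0),\psi(1),\partial_x\psi$ are controlled. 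Applying this with $\psi\equiv 1$ and with $\psi(x)=\cosh(1-x)$ yields the Lipschitz regularity of the two kernel-integrals of $\ty$ appearing in the boundary-derivative formulas, and hence of $\partial_x\tv(\cdot,0)$ and $\partial_x\tv(\cdot,1)$ themselves.

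The main obstacle I anticipate is not conceptual but bookkeeping: one must verify that the boundary traces $\partial_x\tv(\cdot,0)$ and $\partial_x\tv(\cdot,1)$ really are \emph{exactly} these linear functionals of $\ty$, i.e. that differentiating the representation \eqref{e:v-y-link} in $x$ and evaluating at the endpoints produces only $\tilde Y$-integrals against smooth kernels (no leftover pointwise values of $\ty$, which are not defined for $\ty\in L^\infty$ alone). This is where one uses that the $\sinh$ and $\cosh$ kernels in \eqref{e:v-y-link} are smooth and that the endpoint contributions of the $\int_0^x\cosh(x-s)Y\,\mathrm{ds}$ term, after differentiation, either vanish or combine into admissible integrals. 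Care is needed at $x=1$, where the derivative of the $\int_0^x$ term contributes a boundary term; however $Y$ is continuous, so this is harmless and still expressible through the smooth-kernel integrals. Once the representation is pinned down, the Lipschitz bound is immediate from the weak-formulation estimate above, with a constant depending only on the $L^\infty$ norms of the data and the fixed kernels.
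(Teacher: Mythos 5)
Your proposal is correct and follows essentially the same route as the paper: both express $\partial_x\tv(\cdot,0)$ and $\partial_x\tv(\cdot,1)$ through the representation \eqref{e:v-y-link} as linear functionals of $\ty$ (equivalently of its primitive $\tilde Y$), and both obtain the Lipschitz-in-time bound by inserting time-independent test functions into \eqref{e:weakform-CH-diff}. The only (harmless) difference is that the paper tests with antiderivatives of arbitrary $\phi\in L^2$ to get Lipschitz regularity of $t\mapsto\tilde Y(t,\cdot)$ in $L^2(0,1)$, whereas you test directly with the two fixed smooth kernels, which in fact deals a bit more cleanly with the pointwise term $\tilde Y(t,1)$ at $x=1$.
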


\begin{proof}
By differentiating \eqref{e:v-y-link} in $x$, we obtain:
\begin{equation}
\partial_x \tv(t,x)= 
    \int_{0}^{x}{\sinh(x-s)\tilde{Y}(t,s)\mathrm{ds}}
    + \cosh(0)\tilde{Y}(t,x)
    -\tfrac{\cosh(x)}{\sinh(1)}\int_{0}^{1}{\cosh(1-s)\tilde{Y}(t,s)\mathrm{ds}}.
\end{equation}
To prove the regularity in time of $\partial_x \tv(\cdot,0)$, we prove the time regularity of the function $\tilde{Y}$.

Let $\phi\in L^2(0,1)$ be a function.
We denote by $\Phi$ the primitive of $\phi$ verifying $\Phi(1)=0$:
\begin{equation*}
\Phi(x):= -\int_{x}^{1}{\phi(s)\mathrm{ds}}.
\end{equation*}
Using $\Phi$, (considered as a constant function in time) as a test function in \eqref{e:weakform-CH-diff}, 
we obtain that
\begin{equation}\label{e:Phi-test-func}
\int_{t_0}^{t_1}{\tyl v_l \Phi(0)}
    + \int_{0}^{1}{\Phi\ty(t_1,\cdot)}
    -\int_{0}^{1}{\Phi\ty(t_0,\cdot)}
    = \int_{t_0}^{t_1}{\int_{0}^{1}{\big( (\ty\hv+\hy\tv)\phi - (\ty\partial_x \hv+\hy\partial_x\tv)\,\Phi\big)}}.
\end{equation}
Moreover, by integration by parts, we have 
\begin{equation}\label{e:Phiy-phiY}
\int_{0}^{1}{\Phi\ty(t,\cdot)} = - \int_{0}^{1}{\phi\tilde{Y}(t,\cdot)}.
\end{equation}
Hence, by combining \eqref{e:Phi-test-func} and \eqref{e:Phiy-phiY}, we obtain that
\begin{align*}
\Vert\tilde{Y}(t_1,\cdot)-\tilde{Y}(t_0,\cdot)\Vert_{L^{2}(0,1)} 
    &\leq |t_1-t_0|\big( 
        \Vert \tyl\Vert_{L^{\infty}([0,T])} \\
        &+\Vert\ty\hv+\hy\tv\Vert_{L^{\infty}([0,T],L^2(0,1))}
        + \Vert\ty\partial_x\hv+\hy\partial_x\tv\Vert_{L^{\infty}([0,T],L^2(0,1))}
        \big).
\end{align*}
\end{proof}

We prove the following auxiliary inequalities.

\begin{prop}\label{p:aux-ineq}
There exists a constant $C>0$ such that, we have the inequalities
\begin{align}
\label{i:aux-ineq-l}
\forall^{a.e.} t\in\Gamma_l, \quad 
\frac{d}{dt}\big(|\partial_x \tv(t,0)|^2\big) 
    \leq C &\left(\Vert\tv(t,\cdot)\Vert_{H^1(0,1)}^2+|\partial_x \tv(t,0)|^2\right) \\ \notag
      &+\frac{1}{2}|\partial_x \tv(t,1)|^2|v_r(t)|
      + |\tyl|^2v_l^+,  \\
\label{i:aux-ineq-r}
\forall^{a.e.} t\in\Gamma_r, \quad 
\frac{d}{dt}\big(|\partial_x \tv(t,1)|^2\big) 
    \leq C &\left(\Vert\tv(t,\cdot)\Vert_{H^1(0,1)}^2+|\partial_x \tv(t,1)|^2\right) \\ \notag
      &+\frac{1}{2}|\partial_x \tv(t,0)|^2|v_l(t)|
      + |\tyr|^2v_r^+,
\end{align}
where we recall that $\Gamma_l/ \Gamma_r$ are the set of times of entering flux at the left/right defined in \eqref{e:Gl-Gr-def} and $v_l^+$ (resp. $v_r^+$) is the positive part of $v_l$ (resp. $v_r$).
\end{prop}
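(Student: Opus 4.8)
The plan is to test the weak formulation \eqref{e:weakform-CH-diff} against the time-independent auxiliary functions \(u_l\) and \(u_r\), which are designed precisely so as to extract the boundary derivatives of \(\tv\) while annihilating the ``wrong'' boundary flux. Since \((1-\partial_x^2)u_l=0\) and \(u_l(1)=0\), two integrations by parts, using the elliptic identity \(\ty=(1-\partial_x^2)\tv\) together with the homogeneous conditions \(\tv(\cdot,0)=\tv(\cdot,1)=0\), yield \(\int_0^1 \ty\,u_l = u_l(0)\,\partial_x\tv(\cdot,0)=\tanh(1)\,\partial_x\tv(\cdot,0)\), and the right-endpoint flux \(\tyr v_r u_l(1)\) disappears because \(u_l(1)=0\). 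The analogous identity \(\int_0^1 \ty\,u_r = u_r(1)\,\partial_x\tv(\cdot,1)=-\tanh(1)\,\partial_x\tv(\cdot,1)\) holds at the right endpoint, with the left flux killed by \(u_r(0)=0\). This is why these particular Zaremba-type profiles are chosen.

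Choosing \(\phi=u_l\) (constant in time) in \eqref{e:weakform-CH-diff} and dividing by \(\tanh(1)\) gives a time-integrated identity for \(\partial_x\tv(\cdot,0)\); invoking the Lipschitz regularity from Lemma \ref{l:tva-time-reg}, I differentiate it to obtain, for a.e.\ \(t\),
\[
\frac{d}{dt}\partial_x\tv(t,0)=\frac{1}{\tanh(1)}\int_0^1\big((\ty\hv+\hy\tv)\partial_x u_l-(\ty\partial_x\hv+\hy\partial_x\tv)u_l\big)+\tyl v_l .
\]
The two terms carrying \(\hy\) already involve \(\tv\) or \(\partial_x\tv\) and are directly dominated by \(\Vert\tv\Vert_{H^1}\). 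For the two terms carrying \(\ty\), I again substitute \(\ty=(1-\partial_x^2)\tv\) and integrate by parts; the crucial point is to integrate by parts only once in \(\int_0^1\ty\,\partial_x\hv\,u_l\) and to use the elliptic equation \(\partial_x^2\hv=\hv-\hy+\kappa\) so that only \(L^\infty\) quantities survive. This produces bulk terms bounded by \(C\Vert\tv\Vert_{H^1}\) together with boundary contributions which, after inserting \(\hv(0)=v_l\), \(\hv(1)=v_r\) and the explicit endpoint values of \(u_l\) and \(\partial_x u_l\), reduce to \(\tfrac{v_r}{\sinh(1)}\partial_x\tv(t,1)-\tfrac{v_l}{\tanh(1)}\partial_x\tv(t,0)-\partial_x\hv(t,0)\,\partial_x\tv(t,0)\).

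Multiplying by \(2\partial_x\tv(t,0)\) then yields \(\tfrac{d}{dt}|\partial_x\tv(t,0)|^2\), and I estimate each piece on \(\Gamma_l\), where \(v_l>0\): the diagonal term \(-\tfrac{2v_l}{\tanh(1)}|\partial_x\tv(t,0)|^2\) has a favorable sign and is discarded; the cross term \(\tfrac{2v_r}{\sinh(1)}\partial_x\tv(t,0)\partial_x\tv(t,1)\) is split by a weighted Young inequality whose weight is tuned to leave exactly \(\tfrac12|\partial_x\tv(t,1)|^2|v_r|\), the residual \(|\partial_x\tv(t,0)|^2\) being absorbed into \(C|\partial_x\tv(t,0)|^2\); the term with \(\partial_x\hv(t,0)\) is controlled by \(C|\partial_x\tv(t,0)|^2\) since \(\hv\in W^{2,\infty}\); and the entering-data term \(2\tyl v_l\partial_x\tv(t,0)\) is treated by Young to produce the source \(v_l^+|\tyl|^2\) plus an absorbable multiple of \(|\partial_x\tv(t,0)|^2\). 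This is exactly \eqref{i:aux-ineq-l}. The inequality \eqref{i:aux-ineq-r} follows from the symmetric computation with \(u_r\) on \(\Gamma_r\), where now \(v_r<0\) makes the diagonal term favorable and the off-diagonal term contributes \(\tfrac12|\partial_x\tv(t,0)|^2|v_l|\).

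I expect the main obstacle to be the trade in the second step, namely replacing the merely bounded momentum \(\ty\) by the \(H^1\)-controlled velocity \(\tv\) without generating derivatives of \(\hv\) of order higher than two: integrating by parts twice in the \(\partial_x\hv\) term would bring in \(\partial_x^3\hv\), which is not controlled, so one must stop after a single integration by parts and exploit the elliptic equation to keep everything in \(L^\infty\). A secondary but essential point is calibrating the weighted Young inequality so that the coefficient of the off-diagonal flux is exactly \(\tfrac12\); this sharp constant is what will later allow the entering flux generated here to be dominated by the genuine energy flux of Corollary \ref{c:energ-ineq} when the two inequalities are combined in the Gronwall argument.
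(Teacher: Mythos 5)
Your proof is correct, and it takes a genuinely different route from the paper. The paper tests \eqref{e:weakform-CH-diff} with the \emph{time-dependent} multiplier $\tval(t,x)=\partial_x\tv(t,0)\,v_l^+(t)\,u_l(x)$; after the integrations by parts the left-hand side becomes $\int_{t_0}^{t_1}v_l^+\tfrac{d}{dt}\big(|\partial_x\tv(\cdot,0)|^2\big)$ with every term on the right weighted by $v_l^+$, and one concludes by dividing by $v_l^+$ on $\Gamma_l$. That route forces the paper to handle the extra term $\int \ty\,\partial_t\tval$ and to justify that $\tval$ is an admissible $H^1$-in-time test function, which is where Lemma \ref{l:tva-time-reg} and the hypothesis $v_l,v_r\in H^1(0,T)$ enter. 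You instead test with the \emph{fixed} profile $u_l$, extract the pointwise a.e.\ identity for $\tfrac{d}{dt}\partial_x\tv(t,0)$ (using only the Lipschitz regularity of $t\mapsto\partial_x\tv(t,0)$ and Lebesgue differentiation), and then multiply by $2\partial_x\tv(t,0)$ and apply weighted Young inequalities. This is more elementary: there is no $\partial_t\phi$ term, no division by $v_l^+$, and no need for $v_l,v_r\in H^1$ at this stage; your favorable diagonal term $-\tfrac{2v_l}{\tanh(1)}|\partial_x\tv(t,0)|^2$ plays exactly the role of the paper's discarded negative boundary term $-\int|\partial_x\tv(\cdot,0)|^2(v_l^+)^2$. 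Your boundary bookkeeping checks out: with $\partial_x u_l(1)=-1/\cosh(1)$ and $u_l(0)=\tanh(1)$ the cross term is indeed $\tfrac{2v_r}{\sinh(1)}\partial_x\tv(t,0)\partial_x\tv(t,1)$, and Young with weight $\tfrac12$ leaves exactly $\tfrac12|\partial_x\tv(t,1)|^2|v_r|$; your warning about stopping after one integration by parts in the $\partial_x\hv$ term is also the right one, since $\partial_x^3\hv$ involves $\partial_x\hy$, which is not controlled. The only slip is a harmless sign in the right-endpoint identity (one has $\int_0^1\ty\,u_r=-u_r(1)\,\partial_x\tv(\cdot,1)=+\tanh(1)\,\partial_x\tv(\cdot,1)$, since $u_r(1)=-\tanh(1)$); it washes out once you square, and the diagonal term $\tfrac{2v_r}{\tanh(1)}|\partial_x\tv(t,1)|^2$ is still favorable on $\Gamma_r$ where $v_r<0$.
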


\begin{proof}
The two inequalities \eqref{i:aux-ineq-l} and \eqref{i:aux-ineq-r} have the same proof, we prove inequality \eqref{i:aux-ineq-l} here.

We define the auxiliary test function $\tval$ through
\begin{equation}
\forall t\in [0,T],\forall x\in [0,1], \quad \tval (t,x) := \partial_x\tv (t,0)\, v_l^+(t)\, u_l(x),
\end{equation}

Let $0<t_0<t_1<T$ be two positive times (at the end of the proof, we will take $t_0=t-\varepsilon$ and $t_1=t+\varepsilon$ and make $\varepsilon$ goes to $0$).
Using Lemma \ref{l:tva-time-reg}, we know that we can take $\tval$ as test function in \eqref{e:weakform-CH-diff}, which leads to
\begin{align}
\notag
\int_{t_0}^{t_1}&{\int_{0}^{1}{\big(\ty\,\partial_t \tval+ (\ty\hv+\hy\tv)\,\partial_x \tval - (\ty\partial_x \hv+\hy\partial_x\tv).\tval\big)}}        \\
\label{e:weakform-tva-test}
    &= -\int_{t_0}^{t_1}{\tyl v_l \tval(\cdot,0)}
    +\int_{0}^{1}{\tval(t_1,\cdot)\ty(t_1,\cdot)}
    -\int_{0}^{1}{\tval(t_0,\cdot)\ty(t_0,\cdot)},
\end{align}
The boundary term $\int_{t_0}^{t_1}{\tyr v_r\tval(\cdot,1)}$ is equal to $0$ due to the assumption $u_l(1)=0$, and therefore $\tval(\cdot,1)=0$.

Let us remark that the boundary term $\int_{t_0}^{t_1}{\tyl v_l \tval(\cdot,0)}$ is also equal to $0$ in the case which interests us. 
Indeed if $y_l^{1,c}=y_l^{2,c}$ (meaning that the two solutions have the same boundary condition) then $\tyl=0$.

We simplify each term similarly to the proof of Proposition \ref{p:energ-eq}. 

For $a:[0,1]\rightarrow \R$ continuous, we use the notation $[a]_0^1$ for 
\begin{equation*}
[a]_0^1:=a(1)-a(0)
\end{equation*}

\begin{itemize}
\item 
First, let us simplify $\int_{t_0}^{t_1}{\int_{0}^{1}{\ty\,\partial_t \tval}}$.
To do so, we replace $\ty$ by $(1-\partial_x^2)\tv$ using \eqref{e:ell-tv-ty}.
Then, we integrate by parts
\begin{align}
\notag
\int_{t_0}^{t_1}{\int_{0}^{1}{\ty\,\partial_t \tval}}
    = &\int_{t_0}^{t_1}{\int_{0}^{1}{(1-\partial_x^2)\tv\,\partial_t \tval}}   \\  \notag
    = &\int_{t_0}^{t_1}{\int_{0}^{1}{\tv\,\partial_t \tval}}                   
       -\int_{t_0}^{t_1}{\int_{0}^{1}{\tv\,\partial_t \partial_x^2\tval}}      \\  \notag
       &+\int_{t_0}^{t_1}{\left[\tv\,\partial_t \partial_x\tval\right]_0^1}
       -\int_{t_0}^{t_1}{\left[\partial_x \tv\,\partial_t \tval\right]_0^1}.
\end{align}
By definition of $\tval$, we have $(1-\partial_x^2)\tval = 0$, which allows us to cancel the first two terms.
Moreover $\tv_{|x=0}=\tv_{|x=1}=0$, which allows to forget the third term.
For the last term, we use the facts that 
$\tval_{|x=1}=0$,
$u_l(0)=\tanh(1)$ and
$\partial_x \tval_{|x=0}=-\partial_x \tv_{|x=0} v_l^+$.
This gives
\begin{align}
\notag
\int_{t_0}^{t_1}{\int_{0}^{1}{\ty\,\partial_t \tval}}
    =  &-\int_{t_0}^{t_1}{\left[\partial_x \tv\,\partial_t \tval\right]_0^1}    \\  \notag
    = & \tanh(1)\int_{t_0}^{t_1}{\partial_x \tv(t,0)\frac{d}{dt}(\partial_x \tv(t,0)v_l^+(t))}    \\ 
\label{e:aux-in-pcomb0}
    = & \tanh(1)\left(|\partial_x \tv(t_1,0)|^2v_l^+(t_1)-|\partial_x \tv(t_0,0)|^2v_l^+(t_0)\right) \\
\notag
     &- \frac{\tanh(1)}{2} \int_{t_0}^{t_1}{v_l^+(t)\frac{d}{dt}\left(|\partial_x \tv(t,0)|^2\right)}.
\end{align}

\item
For all $t\in[0,T]$, and in particular for $t=t_0$ and $t=t_1$, 
we simplify $\int_{0}^{1}{\ty(t,\cdot).\tval(t,\cdot)}$ similarly:
\begin{align}
\notag
\int_{0}^{1}{\ty(t,\cdot) \tval(t,\cdot)} 
    = &\int_{0}^{1}{(1-\partial_x^2)\tv(t,\cdot) \tval(t,\cdot)} \\ \notag
    = & \int_{0}^{1}{\tv\, \tval}                   
       -\int_{0}^{1}{\tv\, \partial_x^2\tval}      
       + \left[\tv\, \partial_x\tval\right]_0^1
       -\left[\partial_x \tv\, \tval\right]_0^1         \\   \notag
    = & -\left[\partial_x \tv\, \tval\right]_0^1        \\   
\label{e:aux-in-pcomb1}
    = & \tanh(1) |\partial_x \tv(t,0)|^2v_l^+(t).
\end{align}

\item
We bound $\int_{t_0}^{t_1}{\int_{0}^{1}{\hy\tv\,\partial_x \tval}}$ and $\int_{t_0}^{t_1}{\int_{0}^{1}{\hy\partial_x\tv. \tval}}$ using the Cauchy-Schwarz inequality:
\begin{align}
\left|\int_{t_0}^{t_1}{\int_{0}^{1}{\hy\tv\,\partial_x \tval}}\right| 
    &\leq \Vert\hy\Vert_{L^{\infty}(\Omega_T)} \int_{t_0}^{t_1}{\Vert\tv\Vert_{L^2(0,1)}\,\Vert\tval\Vert_{H^1(0,1)}}, \\
\left|\int_{t_0}^{t_1}{\int_{0}^{1}{\hy\partial_x \tv.\tval}}\right| 
    &\leq \Vert\hy\Vert_{L^{\infty}(\Omega_T)} \int_{t_0}^{t_1}{\Vert\tv\Vert_{H^1(0,1)}\,\Vert\tval\Vert_{L^2(0,1)}}.
\end{align}
We simplify this expression using the fact that for $a,b,c\geq 0$, one has $a^2c+b^2c\geq 2abc$
\begin{align*}
\Vert\tv(t,\cdot)\Vert_{L^2(0,1)}\,\Vert\tval(t,\cdot)\Vert_{H^1(0,1)}
    &\leq \Vert\tv(t,\cdot)\Vert_{L^2(0,1)}^2 v_l^+(t) 
      + \Vert u_l\Vert_{H^1(0,1)}^2 |\partial_x \tv(t,0)|^2v_l^+(t), \\
\Vert\tv(t,\cdot)\Vert_{H^1(0,1)}\,\Vert\tval(t,\cdot)\Vert_{L^2(0,1)}
    &\leq \Vert\tv(t,\cdot)\Vert_{H^1(0,1)}^2 v_l^+(t) 
      + \Vert u_l\Vert_{L^2(0,1)}^2 |\partial_x \tv(t,0)|^2v_l^+(t). 
\end{align*}
Therefore
\begin{align} \notag
\left|\int_{t_0}^{t_1}{\int_{0}^{1}{\hy\tv\,\partial_x \tval}}\right|
    &+ \left|\int_{t_0}^{t_1}{\int_{0}^{1}{\hy\partial_x \tv.\tval}}\right| \\ \label{e:aux-in-pcomb2}
    &\leq C \int_{t_0}^{t_1}{\left(\Vert\tv(t,\cdot)\Vert_{H^1(0,1)}^2
                                  +|\partial_x \tv(t,0)|^2\right)v_l^+(t)\,\mathrm{d}t}.
\end{align}

\item
We simplify the term $\int_{t_0}^{t_1}{\int_{0}^{1}{\ty\hv\,\partial_x \tval}}$
\begin{align}
\notag
\int_{t_0}^{t_1}{\int_{0}^{1}{\ty\hv\,\partial_x \tval}}
    &= \int_{t_0}^{t_1}{\int_{0}^{1}{(1-\partial_x^2)\tv\hv\,\partial_x \tval}} \\
\label{e:aux-in-proof1}
    &= \int_{t_0}^{t_1}{\int_{0}^{1}{\tv\hv\,\partial_x \tval}}
      -\int_{t_0}^{t_1}{\int_{0}^{1}{\partial_x^2\tv\hv\,\partial_x \tval}}.     
\end{align}
We bound the first term $\int_{t_0}^{t_1}{\int_{0}^{1}{\tv\hv\,\partial_x \tval}}$ of the right hand side of \eqref{e:aux-in-proof1} by
\begin{align}
\notag
\left|\int_{t_0}^{t_1}{\int_{0}^{1}{\tv\hv\,\partial_x \tval}}\right| 
    &\leq \Vert\hv\Vert_{L^{\infty}(\Omega_T)} \int_{t_0}^{t_1}{\Vert\tv\Vert_{L^2(0,1)}\,\Vert\tval\Vert_{H^1(0,1)}} \\
\label{e:aux-in-proof2}
    &\leq C \int_{t_0}^{t_1}{\left(\Vert\tv\Vert_{H^1(0,1)}^2+|\partial_x \tv(\cdot,0)|^2\right)v_l^+}.
\end{align}

For the second term $\int_{t_0}^{t_1}{\int_{0}^{1}{\partial_x^2\tv\hv\,\partial_x \tval}}$ of the right hand side of \eqref{e:aux-in-proof1}, we have
\begin{equation}\label{e:aux-in-proof3}
\int_{t_0}^{t_1}{\int_{0}^{1}{\partial_x^2\tv\hv\,\partial_x \tval}}
    = \int_{t_0}^{t_1}{[\partial_x\tv\partial_x\tval \hv]_0^1}
     -\int_{t_0}^{t_1}{\int_{0}^{1}{\partial_x\tv\partial_x(\hv\,\partial_x \tval)}}
\end{equation}
We bound the trilinear term using the fact that 
$\Vert\tval\Vert_{H^2}=|\partial_x \tv(\cdot,0)|v_l^+\Vert u_l\Vert_{H^2}$.
\begin{equation}\label{e:aux-in-proof4}
\left|\int_{t_0}^{t_1}{\int_{0}^{1}{\partial_x\tv\partial_x(\hv\,\partial_x \tval)}}\right|
    \leq  C \int_{t_0}^{t_1}{\left(\Vert\tv\Vert_{H^1(0,1)}^2+|\partial_x \tv(\cdot,0)|^2\right)v_l^+}.
\end{equation}
The boundary term 
$\int_{t_0}^{t_1}{(\partial_x\tv\partial_x\tval\hv)_{|x=0}}=-\int_{t_0}^{t_1}{|\partial_x\tv(\cdot,0)|^2 (v_l^+)^2}$
can be left as is (as it is negative), whereas the term $\int_{t_0}^{t_1}{(\partial_x\tv\partial_x\tval \hv)_{|x=1}}$ can be bounded through
\begin{align}
\notag
\left|\int_{t_0}^{t_1}{(\partial_x\tv\partial_x\tval \hv)_{|x=1}}\right|
    &\leq \int_{t_0}^{t_1}{\left(\tfrac{\tanh(1)}{4}|\partial_x\tv(\cdot,1)|^2|v_r|v_l^+ + \tfrac{4}{\tanh(1)}|\partial_x\tv(\cdot,0)|^2|\partial_x u_l(1)|^2|v_r|v_l^+\right)}  \\
\label{e:aux-in-proof5}
    &\leq \int_{t_0}^{t_1}{\left(\tfrac{\tanh(1)}{4}|\partial_x\tv(\cdot,1)|^2|v_r|v_l^+ + C|\partial_x\tv(\cdot,0)|^2v_l^+\right)}
\end{align}
Combining \eqref{e:aux-in-proof1}-\eqref{e:aux-in-proof5}, we get
\begin{equation}\label{e:aux-in-pcomb3}
\left|\int_{t_0}^{t_1}{\int_{0}^{1}{\ty\hv\,\partial_x \tval}}\right|
    \leq C \int_{t_0}^{t_1}{\left(\Vert\tv\Vert_{H^1(0,1)}^2+|\partial_x \tv(\cdot,0)|^2\right)v_l^+}
     + \tfrac{\tanh(1)}{4}\int_{t_0}^{t_1}{|\partial_x\tv(\cdot,1)|^2|v_r|v_l^+}.
\end{equation}

\item
We simplify the term $\int_{t_0}^{t_1}{\int_{0}^{1}{\ty\partial_x\hv. \tval}}$ of \eqref{e:weakform-tva-test}:
\begin{align}
\notag
\int_{t_0}^{t_1}{\int_{0}^{1}{\ty \tval\,\partial_x\hv}}
    &= \int_{t_0}^{t_1}{\int_{0}^{1}{(1-\partial_x^2)\tv\tval\partial_x\hv}} \\
\label{e:aux-in-prooF1}
    &= \int_{t_0}^{t_1}{\int_{0}^{1}{\tv\tval\partial_x \hv}}
      -\int_{t_0}^{t_1}{\int_{0}^{1}{\partial_x^2\tv\tval\partial_x \hv}}.     
\end{align}
We bound the first term $\int_{t_0}^{t_1}{\int_{0}^{1}{\tv\tval \partial_x \hv}}$ of the right hand side of \eqref{e:aux-in-prooF1} by
\begin{align}
\notag
\left|\int_{t_0}^{t_1}{\int_{0}^{1}{\tv\tval\partial_x \hv}}\right| 
    &\leq \Vert\hv\Vert_{L^{\infty}([0,T],W^{1,\infty}(0,1))} \int_{t_0}^{t_1}{\left(\Vert\tv\Vert_{L^2(0,1)}\,\Vert\tval\Vert_{L^2(0,1)}\right)} \\
\label{e:aux-in-prooF2}
    &\leq C \int_{t_0}^{t_1}{\left(\Vert\tv\Vert_{H^1(0,1)}^2+|\partial_x \tv(t\cdot,0)|^2\right)v_l^+}.
\end{align}
For the second term $\int_{t_0}^{t_1}{\int_{0}^{1}{\partial_x^2\tv\tval\partial_x \hv}}$ of the right hand side of \eqref{e:aux-in-prooF1}, we have:
\begin{equation}\label{e:aux-in-prooF3}
\int_{t_0}^{t_1}{\int_{0}^{1}{\partial_x^2\tv\tval\,\partial_x \hv}}
    = \int_{t_0}^{t_1}{[\partial_x\tv\tval \partial_x\hv]_0^1}
     -\int_{t_0}^{t_1}{\int_{0}^{1}{\partial_x\tv\partial_x(\tval \partial_x\hv)}}.
\end{equation}
Once again, we bound the trilinear term by using 
$\Vert\tval\Vert_{H^2}=|\partial_x \tv(\cdot,0)|v_l^+\Vert u_l\Vert_{H^2}$.
\begin{equation}\label{e:aux-in-prooF4}
\left|\int_{t_0}^{t_1}{\int_{0}^{1}{\partial_x\tv\partial_x(\tval\partial_x\hv)}}\right|
    \leq  C \int_{t_0}^{t_1}{\left(\Vert\tv\Vert_{H^1(0,1)}^2+|\partial_x \tv(\cdot,0)|^2\right)v_l^+}.
\end{equation}
The boundary term $\int_{t_0}^{t_1}{(\partial_x\tv\tval \partial_x\hv)_{|x=1}}$ is equal to $0$ as $\tval_{|x=1}=0$  by definition. 
The term $\int_{t_0}^{t_1}{(\partial_x\tv\tval \partial_x\hv)_{|x=0}}$ can be bounded through
\begin{equation}\label{e:aux-in-prooF5}
\left|\int_{t_0}^{t_1}{(\partial_x\tv\tval \partial_x\hv)_{|x=1}}\right|
    \leq C\int_{t_0}^{t_1}{|\partial_x\tv(\cdot,0)|^2v_l^+},
\end{equation}
Combining \eqref{e:aux-in-prooF1}-\eqref{e:aux-in-prooF5}, we get
\begin{equation}\label{e:aux-in-pcomb4}
\left|\int_{t_0}^{t_1}{\int_{0}^{1}{\ty\hv\,\partial_x \tval}}\right|
    \leq C \int_{t_0}^{t_1}{\left(\Vert\tv\Vert_{H^1(0,1)}^2+|\partial_x \tv(\cdot,0)|^2\right)v_l^+}.
\end{equation}

\item
At last, we control the boundary term $\int_{t_0}^{t_1}{\tyl v_l \tval(\cdot,0)}$ through
\begin{equation}
\left|\int_{t_0}^{t_1}{\tyl v_l \tval(\cdot,0)}\right|
    \leq \tanh(1)\int_{t_0}^{t_1}{|\tyl|^2(v_l^+)^2} + \tanh(1)\int_{t_0}^{t_1}{|\partial_x \tv(\cdot,0)|^2v_l^+}
\end{equation}
\end{itemize}

Combining all the estimates \eqref{e:aux-in-pcomb0}, \eqref{e:aux-in-pcomb1}, \eqref{e:aux-in-pcomb2}, \eqref{e:aux-in-pcomb3}, \eqref{e:aux-in-pcomb4} for all the terms of \eqref{e:weakform-tva-test}, we get that there exists a constant $C>0$ independent of $t_0,t_1$ such that :
\begin{align} \notag
\int_{t_0}^{t_1}{v_l^+\frac{d}{dt}\left(|\partial_x\tv(\cdot,0)|^2\right)}
    \leq & C \int_{t_0}^{t_1}{\left(\Vert\tv\Vert_{H^1(0,1)}^2+|\partial_x \tv(\cdot,0)|^2\right)v_l^+} \\ \label{e:aux-in-final}
     &+ \frac{1}{2}\int_{t_0}^{t_1}{|\partial_x\tv(\cdot,1)|^2|v_r|v_l^+}.
     + \int_{t_0}^{t_1}{|\tyl|^2(v_l^+)^2}.
\end{align}
Using Lemma \ref{l:tva-time-reg}, we get that $|\partial_x\tv(\cdot,0)|^2\in W^{1,\infty}(0,T)$.
Therefore the function 
\begin{equation*}
U:t\mapsto \int_{0}^{t}{v_l^+(s)\frac{d}{dt}\left(|\partial_x\tv(s,0)|^2\right)\mathrm{d}s},
\end{equation*}
is also $W^{1,\infty}$ and its derivative in the weak sense is equal to 
\begin{equation*}
t\mapsto v_l^+(t)\frac{d}{dt}\left(|\partial_x\tv(t,0)|^2\right),
\end{equation*}
which is in $L^{\infty}$.
By Rademacher theorem, $U$ is differentiable in the classical sense for almost every $t\in ]0,T[$.
For such a fix $t$, and for $\varepsilon>0$, we take $t_0=t-\varepsilon$ and $t_1=t+\varepsilon$ in \eqref{e:aux-in-final}.
When $\varepsilon$ goes to zero, every term converges, and we get
\begin{equation}\label{e:aux-in-final2}
v_l^+\frac{d}{dt}\left(|\partial_x\tv(\cdot,0)|^2\right)
    \leq  C \left(\Vert\tv\Vert_{H^1(0,1)}^2+|\partial_x \tv(\cdot,0)|^2\right)v_l^+
     + \frac{1}{2}|\partial_x\tv(\cdot,1)|^2|v_r|v_l^+
     +|\tyl|^2(v_l^+)^2.
\end{equation}
We can divide by $v_l^+$ whenever we are in $\Gamma_l$, which gives the inequality \eqref{i:aux-ineq-l}, as wanted.

The proof of inequality \eqref{i:aux-ineq-r} is similar at each step, except we use the test function
\begin{equation}
\tvar :=  \partial_x\tv (t,1)\, v_r^-(t)\, u_r(x)
\end{equation}
instead of $\tval$.
\end{proof}

\begin{rem}
If we are ready to increase the constant $C$ in front of $\left(\Vert\tv(t,\cdot)\Vert_{H^1(0,1)}^2+|\partial_x \tv(t,1)|^2\right)$ in \eqref{i:aux-ineq-l}, then we could change the constants in front of $|\partial_x \tv(t,1)|^2|v_r(t)|$ and $|\tyl|^2v_l^+$ in this inequality.
\end{rem}


\subsection{Gronwall argument and end of the proof}\label{s:gron}

We define the functions $E$, $E_l$ and $E_r$ by:
\begin{equation}
E(t)  := \Vert \tv(t,\cdot)\Vert_{H^1(0,1)}^2,  \quad
E_l(t):= |\partial_x \tv(t,0)|^2,               \quad
E_r(t):= |\partial_x \tv(t,1)|^2.
\end{equation}
By Lemmata \ref{l:tv-time-reg} and \ref{l:tva-time-reg}, we know that $E$ is well-defined and Lipschitz.
Moreover, in the case where the boundary conditions for $y^1$ and $y^2$ are the same,
\begin{equation}
y_l^{1,c}=y_l^{2,c}
 \quad \text{and} \quad 
y_r^{1,c}=y_r^{2,c},
\end{equation} 
we can combine \eqref{i:energ-ineq}, \eqref{i:aux-ineq-l} and \eqref{i:aux-ineq-r}, to get
\begin{subequations}
\label{i:gron-syst}
\begin{align}
E'+E_l'+E_r'                       &\leq C(E+E_l+E_r)       &\text{ on } \Gamma_l\cap\Gamma_r,      \\
E'+E_l' + \tfrac{1}{2}E_r|v_r|      &\leq C(E+E_l)           &\text{ on } \Gamma_l\setminus(\Gamma_l\cap\Gamma_r), \\
E'+E_r' + \tfrac{1}{2}E_l|v_l|      &\leq C(E+E_r)           &\text{ on } \Gamma_r\setminus(\Gamma_l\cap\Gamma_r), \\
E' +\tfrac{1}{2}E_l|v_l|+\tfrac{1}{2}E_r|v_r|   &\leq CE      &\text{ on } [0,T]\setminus (\Gamma_l\cup\Gamma_r). 
\end{align}
\end{subequations}
Therefore, we can use the Gronwall inequality to get uniqueness on each time interval where neither $v_l$ nor $v_r$ changes sign.
On such an interval $I=[T_0,T_1]$, one gets:
\begin{subequations}
\label{i:gron-syst-cons}
\begin{align}
(E+E_l+E_r)(T_1) &\leq \exp(C(T_1-T_0))\, (E+E_l+E_r)(T_0) &\text{ if } I\subset\Gamma_l\cap\Gamma_r, \\
(E+E_l)(T_1)     &\leq \exp(C(T_1-T_0))\, (E+E_l)(T_0)     &\text{ if } I\subset\Gamma_l\setminus(\Gamma_l\cap\Gamma_r), \\
(E+E_r)(T_1)     &\leq \exp(C(T_1-T_0))\, (E+E_r)(T_0)     &\text{ if } I\subset\Gamma_r\setminus(\Gamma_l\cap\Gamma_r), \\
E(T_1)           &\leq \exp(C(T_1-T_0))\, E(T_0)           &\text{ if } I\subset[0,T]\setminus (\Gamma_l\cup\Gamma_r). 
\end{align}
\end{subequations}
This implies that 
\begin{equation} \label{e:est-ener-final}
E(T_1) \leq \exp(C(T_1-T_0))\, (E+E_l+E_r)(T_0).
\end{equation}
This concludes the proof of the first part of Theorem \ref{t:main}.

Now let us assume that $y_0^1=y_0^2$.
Let us denote by $T_0<T_1<...<T_n<T$ the times where $v_l$ or $v_r$ change sign.
On $[0,T_0]$, and on each interval $[T_i,T_{i+1}]$ one has the estimate \eqref{e:est-ener-final}.
By induction, we obtain that $v$ is equal to zero for each $T_i$ and on $[0,T]$.

\begin{rem} \label{rem:granw-contrex}
Gronwall argument normally comes with stability estimates.
However, in our case, if initial data are non-zero, they could degenerate.

For example, take $T=1$ and $v_r$ and $v_l$ given by $v_l(t)=-1+t$ and $v_r(t)=1$.
For the sake of simplicity, we assume that $C=1$ here.
Then the functions $E$, $E_l$ and $E_r$ defined by $E(t):=\frac{e^t+1}{2}$, $E_l(t):=\frac{1}{2(1-t)}$ and $E_r(t):=0$ verify the system \eqref{i:gron-syst} on $[0,1]$.
But $E_l$ is going to infinity so we cannot continue estimates on $E$ after $t=1$.

This phenomenon cannot happen in the case of an initial data equal to zero, because in this case, $E_l=E_r=0$ for all $t$. 
\end{rem}

\begin{rem}
The aforementioned constant $C$ do depends on $\Vert y^1\Vert_{L^{\infty}}$ and $\Vert y^2\Vert_{L^{\infty}}$.
Due to this, our estimates cannot be used to prove the existence or uniqueness of a lower class of regularity than the one we use.
\end{rem}

\begin{rem}
If the boundary conditions $(y_l^{1,c},y_r^{1,c})$ and $(y_l^{2,c},y_r^{2,c})$ are not the same, one still get an \textit{a priori} estimate.
However, due to remark \ref{rem:granw-contrex}, one can see that this estimates no longer provides uniqueness in the cases where $v_l$ or $v_r$ changes sign.

A question that is still open, is to determine whether or not one could still get estimates if the two solutions we are comparing do not have the same boundary fluxes $v_l$ and $v_r$.
\end{rem}

\section{Proof of Theorem \ref{t:third}} \label{s:higher-order}

\subsection{Energy estimate} \label{s:energ-est-gen}

Let us take two weak solutions $(y^1,y_r^1,y_l^1)$ and $(y^2,y_r^2,y_l^2)$ of the transport-elliptic system associated with $\An$ with initial and boundary conditions $(y_0^1,\mathbf{v_l},\mathbf{v_r},y_r^{1,c},y_l^{1,c})$ and $(y_0^2,\mathbf{v_l},\mathbf{v_r},y_r^{2,c},y_l^{2,c})$.
We define the following functions 
\begin{align}
\ty   &:= y^1-y^2,                  &\tv  := v^1-v^2,             \\
\hy   &:= \frac{y^1+y^2}{2},        &\hv  := \frac{v^1+v^2}{2},   \\
\tyl  &:= y_l^1-y_l^2,              &\tyr := y_r^1-y_r^2,            
\end{align}
where the functions $v^1$ and $v^2$ are given through \eqref{e:syst-v-y-ell-gen}.
Let us remark here that we have $(v^1-v^2)_{|x=0}=(v^1-v^2)_{|x=1}=0$, ... , $\partial_x^{n-1}(v^1-v^2)_{|x=0}=\partial_x^{n-1}(v^1-v^2)_{|x=1}=0$ and $\left(\left(\frac{v^1+v^2}{2}\right)_{|x=0},...,\partial_x^{n-1}\left(\frac{v^1+v^2}{2}\right)_{|x=0}\right)=\mathbf{v_l}$ as well as $\left(\left(\frac{v^1+v^2}{2}\right)_{|x=1},...,\partial_x^{n-1}\left(\frac{v^1+v^2}{2}\right)_{|x=1}\right)=\mathbf{v_r}$. \\

We take the difference of Equation \eqref{e:weakform-CH_gen} for the solutions $y^1$ and $y^2$.
The function $\ty$ verifies the following equality for all $0\leq t_0\leq t_1 <T$ and for all test function $\phi\in H^1([t_0,t_1]\times[0,1])$:
\begin{align}
\notag
\int_{t_0}^{t_1}&{\int_{0}^{1}{\big(\ty\,\partial_t \phi+ (\ty\hv+\hy\tv)\,\partial_x \phi - (\ty\partial_x \hv+\hy\partial_x\tv)\tv\big)}}        \\
\label{e:weakform-CH-diff_gen}
    &= \int_{t_0}^{t_1}{\tyr v_r \phi(\cdot,1)} - \int_{t_0}^{t_1}{\tyl v_l \phi(\cdot,0)}
    +\int_{0}^{1}{\phi(t_1,\cdot)\ty(t_1,\cdot)}-\int_{0}^{1}{\phi(t_0,\cdot)\ty(t_0,\cdot)}.
\end{align}

The following Lemma is the generalization of Lemma \ref{l:tv-time-reg}, and its proof is similar.

\begin{lem}\label{l:gen-tv-time-reg}
The functions $\hv$ and $\tv$ lie in $L^{\infty}([0,T],W^{2n,\infty}(0,1))$.
Moreover the function $\tv$ lies in $W^{1,\infty}([0,T],H^n([0,1]))$.
\end{lem}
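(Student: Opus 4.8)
The plan is to mirror the two-part structure of the proof of Lemma \ref{l:tv-time-reg}, making exactly two substitutions: the explicit representation formula \eqref{e:v-y-link}, which has no analogue for general $n$, is replaced by the elliptic regularity estimate of Lemma \ref{l:ell-reg-gen}, and the $H^1$ energy of the operator $1-\partial_x^2$ is replaced by the coercive energy naturally attached to $\An$, namely the form $g\mapsto\int_0^1\And g\cdot\And g$ appearing in Definition \ref{d:syst-v-y-ell-gen}.

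For the spatial regularity I would argue pointwise in time. Since $y^1,y^2\in L^{\infty}(\Omega_T)$ and the boundary data $\mathbf{v_l},\mathbf{v_r}$ are continuous, hence bounded, on $[0,T]$, the $L^{\infty}$–$W^{2n,\infty}$ bound of Lemma \ref{l:ell-reg-gen} applied at each fixed $t$ yields $\Vert v^i(t,\cdot)\Vert_{W^{2n,\infty}}\lesssim \Vert y^i\Vert_{L^{\infty}(\Omega_T)}+|\mathbf{v_l}(t)|+|\mathbf{v_r}(t)|$ for $i=1,2$. Taking the supremum over $t$ and forming half-sums and differences gives $\hv,\tv\in L^{\infty}([0,T],W^{2n,\infty}(0,1))$, which is the first assertion and also guarantees that all the norms needed below are finite.

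For the time regularity, fix $t_0<t_1$ and set $a:=\tv(t_1,\cdot)-\tv(t_0,\cdot)$. Because $v^1$ and $v^2$ share the same boundary data, both $v^1-b$ and $v^2-b$ lie in $H^n_0(0,1)$, so $\tv(t_1,\cdot),\tv(t_0,\cdot)\in H^n_0(0,1)$ and therefore $a\in H^n_0(0,1)$. Using $a$ as the test function $g$ in the variational formulation of Definition \ref{d:syst-v-y-ell-gen} for $\tv$ (the $b$-contributions cancel upon taking the difference of the identities for $\tv(t_1,\cdot)$ and $\tv(t_0,\cdot)$), I obtain the analogue of the coercivity estimate \eqref{e:a-H1norm},
\[
\int_0^1\big(\ty(t_1,\cdot)-\ty(t_0,\cdot)\big)\,a=\int_0^1\And a\cdot\And a=\sum_{k=0}^n\int_0^1|\partial_x^k a|^2\gtrsim\Vert a\Vert_{H^n(0,1)}^2.
\]
Then, inserting $a$ viewed as a function constant in time into the weak transport identity \eqref{e:weakform-CH-diff_gen}, the spatial boundary terms $\tyr v_r\,a(\cdot,1)$ and $\tyl v_l\,a(\cdot,0)$ vanish because $a(0)=a(1)=0$, giving the analogue of \eqref{e:a-test-func},
\[
\int_0^1\big(\ty(t_1,\cdot)-\ty(t_0,\cdot)\big)\,a=\int_{t_0}^{t_1}\!\!\int_0^1\big((\ty\hv+\hy\tv)\,\partial_x a-(\ty\partial_x\hv+\hy\partial_x\tv)\,a\big).
\]
Bounding the right-hand side by $|t_1-t_0|$ times $\Vert a\Vert_{H^n}$ and the $L^{\infty}_tL^2_x$ norms of $\ty\hv+\hy\tv$ and $\ty\partial_x\hv+\hy\partial_x\tv$ (finite by the spatial regularity above), and combining with the coercivity estimate, I conclude $\Vert a\Vert_{H^n}\lesssim|t_1-t_0|$, that is $\tv\in W^{1,\infty}([0,T],H^n(0,1))$.

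The computation is routine once these two ingredients are in place, so I expect no serious obstacle, only a point requiring care: the verification that the $\And$-energy controls the full $H^n$ norm and that $a$ genuinely belongs to $H^n_0(0,1)$, so that Definition \ref{d:syst-v-y-ell-gen} may be invoked with $g=a$. This is precisely where the matching of all $n$ boundary traces of $v^1$ and $v^2$, rather than of their values alone, is used, and it is the step that replaces the elementary integration by parts available in the case $n=1$.
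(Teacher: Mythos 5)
Your proposal is correct and follows essentially the same route as the paper: the spatial regularity via the $L^{\infty}$--$W^{2n,\infty}$ elliptic estimate (the paper invokes Lemma \ref{l:tech-ODE} directly on the rewritten ODE, you invoke Lemma \ref{l:ell-reg-gen}, which is proved by that same lemma), and the time regularity by testing the variational formulation and the weak transport identity against $a=\tv(t_1,\cdot)-\tv(t_0,\cdot)\in H^n_0(0,1)$, exactly as in the paper's adaptation of Lemma \ref{l:tv-time-reg}. The point you flag as requiring care --- that $a$ lies in $H^n_0$ because all $n$ traces of $v^1$ and $v^2$ coincide, and that the $\And$-energy is the full $H^n$ norm --- is indeed the only place where the general case differs from $n=1$, and you handle it correctly.
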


\begin{proof}
For the regularity in space of $\tv$ and $\hv$, write
\begin{equation}
(-1)^n\partial_x^{2n}\tv =-\sum_{k=0}^{n-1}{(-\partial_x^2)^k\tv}+\ty.
\end{equation}
We can conclude using Lemma \ref{l:tech-ODE}.

Let us now prove the regularity in time of $\tv$.
Let us fix two times $t_0<t_1$, and denote 
$$a_{t_0}^{t_1}(x):=\tv(t_1,x)-\tv(t_0,x).$$
Recalling that $\tv$ verifies \eqref{e:syst-v-y-ell-gen} with homogeneous boundary conditions, we obtain that for every function $g\in H^n_0(0,1)$
\begin{equation}
\int_{0}^{1}{(\ty(t_1,\cdot)-\ty(t_0,\cdot))g}
    = \int_{0}^{1}{\And a_{t_0}^{t_1}\cdot \And g}
\end{equation}
We apply it with $a_{t_0}^{t_1}$ instead of $g$
\begin{equation}
\int_{0}^{1}{(\ty(t_1,\cdot)-\ty(t_0,\cdot))a_{t_0}^{t_1}}
    = \int_{0}^{1}{|\And a_{t_0}^{t_1}|^2}
\end{equation}
Hence, we get the inequality
\begin{equation}\label{e:gen-a-H1norm}
\Vert a_{t_0}^{t_1}\Vert_{H^n(0,1)}^2
    \leq  \left\vert\int_{0}^{1}{(\ty(t_1,\cdot)-\ty(t_0,\cdot))a_{t_0}^{t_1}}\right\vert.
\end{equation}
Using \eqref{e:weakform-CH_gen} with $a_{t_0}^{t_1}$ instead of $\phi$ (considered as a function constant in time), we obtain that:
\begin{equation}\label{e:gen-a-test-func}
\int_{0}^{1}{(\ty(t_1,\cdot)-\ty(t_0,\cdot))a_{t_0}^{t_1}}
    = \int_{t_0}^{t_1}{\int_{0}^{1}{\big((\ty\hv+\hy\tv)\partial_x a_{t_0}^{t_1}
     - (\ty\partial_x \hv+\hy\partial_x\tv)a_{t_0}^{t_1} \big)}}.
\end{equation}
Combining \eqref{e:gen-a-H1norm} and \eqref{e:gen-a-test-func}, using that $n\geq 1$, we get that:
\begin{equation*}
\Vert a_{t_0}^{t_1}\Vert_{H^n(0,1)}
    \leq |t_1-t_0|\big(\Vert\ty\Vert_{L^{\infty}([0,T]\times[0,1])}\Vert\hv\Vert_{L^{\infty}([0,T],H^1(0,1))}
    +\Vert\hy\Vert_{L^{\infty}([0,T]\times[0,1])}\Vert\tv\Vert_{L^{\infty}([0,T],H^1(0,1))}\big)
\end{equation*}
Recalling that $a_{t_0}^{t_1}(x)=\tv(t_1,x)-\tv(t_0,x)$, we conclude that $\tv\in W^{1,\infty}([0,T],H^n([0,1]))$.
\end{proof}

\begin{rem}
Lemma \ref{l:gen-tv-time-reg} expresses the fact that $\partial_t \An \tv = \partial_x(\tv\hy+\hv\ty) $, which is in $L^{\infty}_tH^{-1}_x$.
By elliptic regularity, we could obtain a higher regularity for $\partial_t\tv$, but it is not needed here.
\end{rem}

\begin{prop}
There exists a constant $C>0$ such that the following inequality holds for almost every $t\in [0,T]$
\begin{equation} \label{e:energ-ineq-gen}
\frac{d}{dt}\left(\Vert \tv\Vert_{H^k(0,1)}^2\right) 
    + |\partial_x^n\tv(\cdot,1)|^2v_r 
    - |\partial_x^n\tv(\cdot,0)|^2v_l
    \leq C\Vert \hv\Vert_{W^{2n,\infty}(0,1)}\Vert \tv\Vert_{H^k(0,1)}^2
\end{equation}
\end{prop}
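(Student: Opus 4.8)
The plan is to derive the energy identity by taking $\tv$ as a test function in the weak formulation \eqref{e:weakform-CH-diff_gen}, exactly as was done in the case $n=1$ in Proposition \ref{p:energ-eq}. By Lemma \ref{l:gen-tv-time-reg}, $\tv$ lies in $W^{1,\infty}_tH^n_x$, so it is an admissible test function. After inserting $\phi=\tv$, the boundary terms at $x=0,1$ coming from $\tyr v_r$ and $\tyl v_l$ vanish because $\tv$ and all its derivatives up to order $n-1$ vanish at the endpoints. The remaining task is to rewrite each of the volume terms using the elliptic relation $\An\tv=\ty$ and integration by parts, turning them into the $H^n$-norm of $\tv$, an exiting/entering flux term involving $\partial_x^n\tv$ at the boundary, and lower-order terms controlled by $\Vert\hv\Vert_{W^{2n,\infty}}\Vert\tv\Vert_{H^n}^2$.

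First I would treat the time-derivative term $\int\!\!\int \ty\,\partial_t\tv$. Replacing $\ty$ by $\An\tv$ and integrating by parts $2k$ times in each block $(-\partial_x^2)^k$, the boundary contributions all drop out since $\tv$ and its derivatives up to order $n-1$ vanish at $0$ and $1$, leaving $\frac{1}{2}\frac{d}{dt}\int_0^1 \And\tv\cdot\And\tv = \frac{1}{2}\frac{d}{dt}\Vert\tv\Vert_{H^n}^2$ (interpreting the $H^n$ norm as the one associated to $\An$, which is equivalent to the standard one). Next I would handle the crucial transport term $\int\!\!\int \ty\hv\,\partial_x\tv = \int\!\!\int \An\tv\,\hv\,\partial_x\tv$. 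This is the step where the genuine energy flux at the boundary is produced: integrating by parts to move the derivatives off $\An\tv$, the top-order contribution should reorganize as a perfect derivative $\frac{1}{2}\partial_x(|\partial_x^n\tv|^2)\hv$ (up to lower-order commutator terms), whose boundary evaluation yields precisely $\frac{1}{2}(|\partial_x^n\tv(\cdot,1)|^2 v_r - |\partial_x^n\tv(\cdot,0)|^2 v_l)$ after using $\hv_{|x=0}=v_l$, $\hv_{|x=1}=v_r$. The stretching term $\int\!\!\int\ty\,\partial_x\hv\,\tv$ is lower order and, after integration by parts, is bounded directly by $C\Vert\hv\Vert_{W^{2n,\infty}}\Vert\tv\Vert_{H^n}^2$.

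The main obstacle is the bookkeeping of the high-order integrations by parts for $n>1$: each block $(-\partial_x^2)^k$ in $\An$ must be integrated by parts and one must verify that (i) the resulting boundary terms beyond the single genuine flux term $\frac{1}{2}|\partial_x^n\tv|^2\hv\big|_0^1$ either vanish (because $\partial_x^j\tv=0$ at the boundary for $j\le n-1$) or can be absorbed, and (ii) all the commutator terms where derivatives land on $\hv$ are strictly lower order in $\tv$, hence dominated by $\Vert\hv\Vert_{W^{2n,\infty}}\Vert\tv\Vert_{H^n}^2$ via repeated integration by parts and Cauchy--Schwarz. One must be careful that the highest-order term produces exactly $|\partial_x^n\tv|^2$ at the boundary and nothing of order $\partial_x^{n+1}\tv$ survives; this is ensured because $\An\tv$ is order $2n$ and the top derivative $\partial_x^{2n}\tv$ is integrated by parts exactly $n$ times against $\partial_x\tv$, balancing to $\partial_x^n\tv$ on each side.

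Finally, once the identity
\[
\tfrac12\tfrac{d}{dt}\Vert\tv\Vert_{H^n}^2 + \tfrac12|\partial_x^n\tv(\cdot,1)|^2v_r - \tfrac12|\partial_x^n\tv(\cdot,0)|^2v_l = R,
\]
is obtained with remainder $R$ satisfying $|R|\le C\Vert\hv\Vert_{W^{2n,\infty}}\Vert\tv\Vert_{H^n}^2$, I would pass from the integrated (in time) identity to the pointwise-in-$t$ differential inequality \eqref{e:energ-ineq-gen} by the same Rademacher/Lebesgue differentiation argument as in Corollary \ref{c:energ-ineq}, using that $\tv\in W^{1,\infty}_tH^n_x$ and that the boundary flux terms $|\partial_x^n\tv(\cdot,0)|^2v_l$ and $|\partial_x^n\tv(\cdot,1)|^2v_r$ are $L^\infty$ in time (here invoking the time-Lipschitz regularity of the boundary traces $\partial_x^n\tv(\cdot,0),\partial_x^n\tv(\cdot,1)$, which one establishes exactly as in Lemma \ref{l:tva-time-reg}). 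Absorbing the factor $2$ into the constant $C$ yields \eqref{e:energ-ineq-gen}.
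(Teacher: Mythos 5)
Your proposal follows essentially the same route as the paper: test with $\tv$, replace $\ty$ by $\An\tv$, integrate by parts so that the only surviving boundary contribution is $\tfrac12\bigl[\hv\,|\partial_x^n\tv|^2\bigr]_0^1$ (all other traces vanishing since $\partial_x^j\tv=0$ at $x=0,1$ for $j\le n-1$), and bound the commutator remainders by $\Vert\hv\Vert_{W^{2n,\infty}}\Vert\tv\Vert_{H^n}^2$. The paper merely packages the bookkeeping you describe into the appendix lemmas (the iterated integration-by-parts identity with the boundary operators $\B_i,\S_i$ and the commutator estimate), so the argument is the same.
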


\begin{proof}
We take $\tv$ as a test function in \eqref{e:weakform-CH-diff_gen}, which gives
\begin{align} \notag
\int_{t_0}^{t_1}&{\int_{0}^{1}{\big(\ty\,\partial_t \tv+ (\ty\hv+\hy\tv)\,\partial_x \tv - (\ty\partial_x\hv+\hy\partial_x\tv)\tv \big)}}   \\ \label{e:gen-energ-deb}
    &= \int_{0}^{1}{\tv(t_1,\cdot)\ty(t_1,\cdot)}-\int_{0}^{1}{\tv(t_0,\cdot)\ty(t_0,\cdot)}.
\end{align}
Then, we simplify each term.

\begin{itemize}
\item 
The term $\int_{t_0}^{t_1}{\int_{0}^{1}{\ty\partial_t \tv}}$ can be treated as follows
\begin{align}
\notag
\int_{t_0}^{t_1}{\int_{0}^{1}{\ty\partial_t \tv}}
    &= \int_{t_0}^{t_1}{\int_{0}^{1}{\An \tv \partial_t \tv}} \\ \notag
    &= \int_{t_0}^{t_1}{\int_{0}^{1}{\And \tv\cdot \partial_t\And \tv}} \\ \label{e:gen-ty-partialttv}
    &= \tfrac{1}{2}\left[\Vert \And\tv\Vert_{L^2}^2\right]_{t_0}^{t_1}.
\end{align}

\item
Similarly, we get
\begin{equation}\label{e:gen-ty-tv}
\int_{0}^{1}{\ty\tv}=\Vert \And\tv\Vert_{L^2}^2.
\end{equation}

\item
The trilinear term $\int_{0}^{1}{\hy\tv \partial_x\tv}$ cancels with $\int_{0}^{1}{\hy\tv \partial_x\tv}$.

\item
To simplify the trilinear term $\int_{0}^{1}{\ty\hv \partial_x\tv}$, we first use Lemma \ref{l:IPP-itérée}
\begin{align}
\notag
\int_{0}^{1}{\ty\hv \partial_x\tv}
    &= \int_{0}^{1}{\An \tv \hv\partial_x\tv} \\ \label{e:gen-ty-hv-partialxtv1}
    &= \int_{0}^{1}{\And \tv \cdot \And(\hv\partial_x \tv)}
       -\left[\hv|\partial_x^n\tv|^2\right]_{0}^{1}.
\end{align}
Then we put all the derivatives on $\tv$, which can be done in a nice way due to Lemma~\ref{l:comutation-Ln-fct}
\begin{equation}\label{e:gen-ty-hv-partialxtv2}
\int_{0}^{1}{\And \tv \cdot \And(\hv\partial_x\tv)}
   = \int_{0}^{1}{\And \tv \cdot \hv \partial_x\And\tv}
     + \int_{0}^{1}{\And \tv \cdot \left[\hv,\And\right]\partial_x\tv}.
\end{equation}
Finally, we integrate by part to once again put all derivatives on $\tv$
\begin{align}
\notag
\int_{0}^{1}{\And \tv\cdot \hv \partial_x\And\tv}
    &= \tfrac{1}{2}\int_{0}^{1}{\hv \partial_x\left|\And\tv\right|^2} \\ \label{e:gen-ty-hv-partialxtv3}
    &= -\tfrac{1}{2}\int_{0}^{1}{\left|\And\tv\right|^2\partial_x\hv} 
      + \tfrac{1}{2}\left[\hv|\partial_x^n\tv|^2\right]_{0}^{1}.
\end{align}
Combining \eqref{e:gen-ty-hv-partialxtv1}, \eqref{e:gen-ty-hv-partialxtv2} and \eqref{e:gen-ty-hv-partialxtv3}, we get
\begin{equation}\label{e:gen-ty-hv-partialxtv4}
\int_{0}^{1}{\ty\hv \partial_x\tv}
    = - \tfrac{1}{2}\int_{0}^{1}{\left|\And\tv\right|^2\partial_x\hv}
     + \int_{0}^{1}{\And \tv \cdot \left[\hv,\And\right]\partial_x\tv}
     - \tfrac{1}{2}\left[\hv|\partial_x^n\tv|^2\right]_{0}^{1}.
\end{equation}

\item
To simplify the trilinear term $\int_{0}^{1}{\ty\partial_x\hv\tv}$, we use Lemma \ref{l:IPP-itérée}
\begin{align}
\notag
\int_{0}^{1}{\ty \partial_x\hv\tv}
    &= \int_{0}^{1}{\An \tv \partial_x\hv\tv} \\ \label{e:gen-ty-partialxhv-tv1}
    &= \int_{0}^{1}{\And \tv \cdot \And(\partial_x\hv \tv)}.
\end{align}
\end{itemize}

We substitute \eqref{e:gen-ty-partialttv}, \eqref{e:gen-ty-tv}, \eqref{e:gen-ty-hv-partialxtv4} and \eqref{e:gen-ty-partialxhv-tv1} into \eqref{e:gen-energ-deb} to get
\begin{align} \notag
\left[\Vert \And\tv\Vert_{L^2}^2\right]_{t_0}^{t_1}
    &+ \int_{t_0}^{t_1}{\left[\hv|\partial_x^n\tv|^2\right]_{0}^{1}} \\ \label{e:gen-energ-fin}
    &+ \int_{t_0}^{t_1}{\int_{0}^{1}{\left(-2\And \tv \cdot \And(\partial_x\hv \tv)
    + \left|\And\tv\right|^2\partial_x\hv
    - 2\And \tv \cdot \left[\tv,\And\right]\partial_x\tv\right)}}
    = 0.
\end{align}

Using Lemma \ref{l:comutation-Ln-fct}, we get that for all $0<t_0<t_1<T$, one has
\begin{align} \notag
\left[\Vert\tv\Vert_{H^k}^2\right]_{t_0}^{t_1}
    + \int_{t_0}^{t_1}{|\partial_x^n\tv(\cdot,1)|^2v_r}
    &- \int_{t_0}^{t_1}{|\partial_x^n\tv(\cdot,0)|^2v_l} \\
    &\leq C \int_{t_0}^{t_1}{\int_{0}^{1}{\Vert \hv\Vert_{W^{2n,\infty}(0,1)}\Vert \tv\Vert_{H^k(0,1)}^2}}
\end{align}

\end{proof}

\subsection{Auxiliary estimate} \label{s:aux-est-gen}

In this paragraph, we chose $I= [T_0,T_1]\subset [0,T]$ an interval such that $v_l$ and $v_r$ do not change sign on $I$.
Without loss of generality, we assume that:
\begin{equation}
    \forall t\in I, v_l(t)>0 \text{ and } v_r(t)>0.
\end{equation}

We construct the auxiliary test function $\tvaln$ as the solution to the following elliptic problem
\begin{subequations}
\begin{align}
\label{e:ell-tvaln}
\An \tvaln &= 0, \\
\label{e:BC-tvaln-r}
\forall i\in[\![0,n-1]\!], \quad  \S_i(\tvaln) (\cdot,1) &=0, \\
\label{e:BC-tvaln-l}
\forall i\in[\![0,n-1]\!], \quad  \B_i(\tvaln) (\cdot,0) &= -\B_i(\tv) (\cdot,0),
\end{align}
\end{subequations}
where the operators $\B_i$ where defined in Appendix $A$ through \eqref{e:def-Bi}.

Let us introduce the space $H^{n}_{0,r}(0,1)$ as the closure of $C^{\infty}_c([0,1))$ for the $H^n$ norm
\begin{equation}
H^{n}_{0,r}(0,1) := \{g\in H^n(0,1);\forall i\in[\![0,n-1]\!], \partial_x^i g(1) = 0 \}.
\end{equation}
It is the natural space to define $\tvaln$ as it is a solution to a Zaremba problem (Dirichlet on one side and Neumann on the other).

\begin{lem}
The function $\tvaln$ exists and is unique in $L^{\infty}(I,H^{n}_{0,r}(0,1))$ as the solution of the following variational problem:
\begin{equation} \label{e:tvaln-varform-Hk0r}
\forall g\in H^{n}_{0,r}(0,1), \quad 
\int_{0}^{1}{\And \tvaln (t,\cdot) \cdot \And g} 
    = - \sum_{i=0}^{n-1}{\B_i(\tv)(t,0)\, \S_i(g)(0)}.
\end{equation}
Moreover, the function $\tvaln$ lies in $L^{\infty}(I,W^{2n,\infty}(0,1))$.
\end{lem}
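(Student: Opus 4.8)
The plan is to solve the variational problem \eqref{e:tvaln-varform-Hk0r} for each fixed $t\in I$ by the Riesz representation theorem, and then to upgrade the resulting $H^n$ regularity to $W^{2n,\infty}$ by viewing the equation $\An\tvaln=0$ as a constant-coefficient ODE. First I would observe that the bilinear form $a(u,g):=\int_0^1\And u\cdot\And g$ is nothing but the standard $H^n$ inner product, since $a(u,g)=\sum_{k=0}^n\int_0^1\partial_x^k u\,\partial_x^k g$ and hence $a(u,u)=\Vert u\Vert_{H^n(0,1)}^2$. In particular $a$ is continuous and coercive on the Hilbert space $H^n_{0,r}(0,1)$, with both constants equal to $1$, so no Poincaré-type inequality is needed. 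The right-hand side $L_t(g):=-\sum_{i=0}^{n-1}\B_i(\tv)(t,0)\,\S_i(g)(0)$ is a bounded linear functional on $H^n_{0,r}(0,1)$: by the Sobolev embedding $H^n(0,1)\hookrightarrow C^{n-1}([0,1])$ the traces $\S_i(g)(0)=\partial_x^i g(0)$ satisfy $|\partial_x^i g(0)|\lesssim \Vert g\Vert_{H^n(0,1)}$ for $i\leq n-1$, whence $\Vert L_t\Vert\lesssim \sum_{i=0}^{n-1}|\B_i(\tv)(t,0)|$. The Riesz representation theorem (equivalently Lax--Milgram, as in Lemma \ref{l:ell-reg-gen}) then produces, for each $t$, a unique $\tvaln(t,\cdot)\in H^n_{0,r}(0,1)$ solving \eqref{e:tvaln-varform-Hk0r}, together with the bound $\Vert\tvaln(t,\cdot)\Vert_{H^n(0,1)}\lesssim\sum_{i=0}^{n-1}|\B_i(\tv)(t,0)|$.

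Next I would recover the strong form and the extra regularity. Testing \eqref{e:tvaln-varform-Hk0r} against $g\in C_c^\infty(0,1)\subset H^n_{0,r}(0,1)$ makes the right-hand side vanish and, after undoing the integration by parts of Appendix A (Lemma \ref{l:IPP-itérée}), yields $\An\tvaln=0$ in the sense of distributions. This is a homogeneous linear ODE of order $2n$ with constant coefficients, so Lemma \ref{l:tech-ODE} (applied with $k=2n$ and zero right-hand side) gives $\tvaln(t,\cdot)\in W^{2n,\infty}(0,1)$ for each $t$.

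Finally, for the uniform-in-time bound I would exploit the linearity of the problem in its boundary data. The space of solutions of $\An f=0$ is $2n$-dimensional and spanned by fixed smooth (exponential/trigonometric) functions; the well-posedness established above shows that the $2n$ boundary functionals $(\S_j(\cdot)(1))_{j}$ and $(\B_i(\cdot)(0))_i$ form an invertible system on this space. Hence there exist fixed functions $w_0,\dots,w_{n-1}\in C^\infty([0,1])$, independent of $t$, with $\S_j(w_i)(1)=0$ and $\B_j(w_i)(0)=\delta_{ij}$, such that $\tvaln(t,\cdot)=-\sum_{i=0}^{n-1}\B_i(\tv)(t,0)\,w_i$, and therefore $\Vert\tvaln(t,\cdot)\Vert_{W^{2n,\infty}(0,1)}\lesssim\sum_{i=0}^{n-1}|\B_i(\tv)(t,0)|$. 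By Lemma \ref{l:gen-tv-time-reg} we have $\tv\in L^\infty(I,W^{2n,\infty}(0,1))$, so the operators $\B_i$ (which involve derivatives of order at most $2n-1$) give boundary data $\B_i(\tv)(t,0)$ that are bounded uniformly in $t\in I$; moreover this representation exhibits $\tvaln$ as a finite sum of fixed spatial profiles with scalar coefficients, reducing its Bochner measurability to that of these coefficients. Combined with the previous estimate this yields $\tvaln\in L^\infty(I,W^{2n,\infty}(0,1))\subset L^\infty(I,H^n_{0,r}(0,1))$.

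The main obstacle I expect is not the Hilbert-space solvability --- which is immediate once one notices the form is the $H^n$ inner product --- but rather the bookkeeping at the boundary: one must verify that the distributional identity $\An\tvaln=0$ together with the variational identity \eqref{e:tvaln-varform-Hk0r} tested against general $g\in H^n_{0,r}(0,1)$ encodes exactly the Zaremba conditions \eqref{e:BC-tvaln-r}--\eqref{e:BC-tvaln-l}, which requires the precise integration-by-parts formula defining the natural operators $\B_i$, and that the $\B_i(\tv)(t,0)$ are controlled by $\Vert\tv\Vert_{W^{2n,\infty}}$ rather than merely by $\Vert\tv\Vert_{H^n}$, which is precisely why the full strength of Lemma \ref{l:gen-tv-time-reg} is used.
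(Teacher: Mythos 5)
Your proposal is correct and follows essentially the same route as the paper: Lax--Milgram (here specialized to the Riesz representation theorem, since $\int_0^1\And u\cdot\And g$ is exactly the $H^n$ inner product) for existence, uniqueness and the $H^n$ bound, followed by the observation that $\An\tvaln=0$ is a constant-coefficient ODE so that Lemma \ref{l:tech-ODE} upgrades the regularity to $W^{2n,\infty}$, with the boundary data $\B_i(\tv)(t,0)$ controlled uniformly in $t$ via Lemma \ref{l:gen-tv-time-reg}. Your additional representation of $\tvaln(t,\cdot)$ as a fixed linear combination $-\sum_i\B_i(\tv)(t,0)\,w_i$ of smooth profiles is a slightly more explicit way of obtaining the uniform-in-time $W^{2n,\infty}$ bound and the measurability, but it is the same argument in substance.
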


\begin{proof}
Since $\tv$ belongs to $L^{\infty}(I,H^{2n}(0,1))$, for each $i$, $t\mapsto \B_i(\tv)(t,0)$ belongs to $L^{\infty}(I)$.
Hence by Lax-Milgram, $\tvaln$ belongs to $L^{\infty}(I,H^{n}_{0,r}(0,1))$ and is the unique solution of problem \eqref{e:tvaln-varform-Hk0r} in this space.

Using Lemma \ref{l:tech-ODE}, one gets that $\tvaln$ belongs to $L^{\infty}(I,W^{2n,\infty}(0,1))$.
\end{proof}

Let $g\in H^n(0,1)$ be a function.
Using Lemma \ref{l:IPP-itérée}, with $\tvaln$ instead of $f$ and $g$ instead of $g$ one has
\begin{equation}
\int_{0}^{1}{\An \tvaln\,g} 
    = \int_{0}^{1}{\And \tvaln\cdot\And g} 
    + \sum_{i=0}^{n-1}{[\B_i(\tvaln)\S_i(g)]_0^1}.
\end{equation}
Now due to \eqref{e:ell-tvaln}, one has $\int_{0}^{1}{\An \tvaln\,g}=0$ and due to \eqref{e:BC-tvaln-l}, one has 
\begin{equation}
\sum_{i=0}^{n-1}{\B_i(\tvaln)(\cdot,0)\S_i(g)(0)}
    =-\sum_{i=0}^{n-1}{\B_i(\tvaln)(\cdot,0)\S_i(g)(0)}.
\end{equation}
Hence
\begin{equation} \label{e:tvaln-varform-Hk}
\int_{0}^{1}{\And \tvaln\cdot\And g} 
    = - \sum_{i=0}^{n-1}{\B_i(\tv)(\cdot,0)\S_i(g)(0)} 
      - \sum_{i=0}^{n-1}{\B_i(\tvaln)(\cdot,1)\S_i(g)(1)}.
\end{equation}

In particular, for $g\in H^{n}_{0}(0,1)$, one has 
\begin{equation} \label{e:tvaln-varform-Hk0}
\int_{0}^{1}{\And \tvaln (t,\cdot) \cdot \And g} = 0.
\end{equation}
As this will be useful later, let us remark that for every $g\in H^{n}_{0,r}(0,1)$, one has
\begin{equation} \label{e:partialxtvaln-varform}
\int_{0}^{1}{\And \partial_x\tvaln (t,\cdot) \cdot \And g} 
    = \sum_{i=0}^{n-1}{\B_i(\partial_x\tv)(t,0)\, \S_i(g)(0)}.
\end{equation}

Similarly to the case of the classical Camassa-Holm equation, the introduction of this auxiliary test function is in sight of an auxiliary inequality.
The purpose of the auxiliary inequality \eqref{e:aux-ineq-gen} is to control the entering energy fluxes.

\begin{prop}\label{l:aux-ineq-gen}
For almost every $t\in I$, we have the inequality
\begin{equation} \label{e:aux-ineq-gen}
\frac{1}{2}\frac{d}{dt}\left(\Vert\tvaln\Vert_{H^n}^2\right)
    + |\partial_x^n\tv(t,0)|^2v_l(t)
    \leq C \left(\Vert\tvaln\Vert_{H^n}^2+ \Vert\tv\Vert_{H^n}^2\right)
    + \tfrac{1}{4}|\partial_x^n \tv(t,1)|^2|v_r|
    + |\tyl|^2.
\end{equation}
\end{prop}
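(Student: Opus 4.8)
The plan is to follow the architecture of Proposition \ref{p:aux-ineq}: use $\tvaln$ as a test function in the difference formulation \eqref{e:weakform-CH-diff_gen} on a window $[t_0,t_1]=[t-\eps,t+\eps]\subset I$, reorganize each term with Lemma \ref{l:IPP-itérée} and the commutator Lemma \ref{l:comutation-Ln-fct}, and then read off \eqref{e:aux-ineq-gen} by a Lebesgue/Rademacher differentiation in $\eps$ as in Corollary \ref{c:energ-ineq}. As a preliminary, exactly as in Lemma \ref{l:tva-time-reg} and using the time-regularity of $\tv$ recorded after Lemma \ref{l:gen-tv-time-reg} (namely $\partial_t\tv\in L^\infty_tH^{2n-1}_x$), I would check that the boundary data $\B_i(\tv)(\cdot,0)$ defining $\tvaln$ are regular enough in time for $\tvaln$ to be an admissible test function.

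The first key point is that the time-derivative term and the two endpoint terms collapse onto the auxiliary energy. Writing $\ty=\An\tv$ and integrating by parts with Lemma \ref{l:IPP-itérée}, the bulk pairings vanish because $\tv\in H^n_0(0,1)$ (so $\int_0^1\And\tv\cdot\And\tvaln=0$ by \eqref{e:tvaln-varform-Hk0}) and $A_n\partial_t\tvaln=0$; the surviving boundary sums are evaluated through \eqref{e:tvaln-varform-Hk0r}, giving $\int_0^1\ty\,\partial_t\tvaln=\tfrac12\tfrac{d}{dt}\Vert\tvaln\Vert_{H^n}^2$ and $\int_0^1\ty\,\tvaln=\Vert\tvaln\Vert_{H^n}^2$. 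Since $\tvaln(\cdot,1)=\S_0(\tvaln)(\cdot,1)=0$, the right endpoint energy flux $\tyr v_r\tvaln(\cdot,1)$ drops out. Rearranging \eqref{e:weakform-CH-diff_gen}, one obtains $\tfrac12[\Vert\tvaln\Vert_{H^n}^2]_{t_0}^{t_1}=(\text{bilinear bulk integrals})+\int_{t_0}^{t_1}\tyl v_l\,\tvaln(\cdot,0)$.

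Next I would dispatch the bulk. The terms carrying $\hy$ are bounded by Cauchy--Schwarz and $\Vert\hy\Vert_{L^\infty}$ by $C(\Vert\tv\Vert_{H^n}^2+\Vert\tvaln\Vert_{H^n}^2)$. For the terms carrying $\ty=\An\tv$, I would argue as in \eqref{e:gen-ty-hv-partialxtv1}--\eqref{e:gen-ty-hv-partialxtv4}. The decisive structural fact is that $\tvaln\in\ker\An$, a fixed $2n$-dimensional space of smooth functions on which all norms are equivalent; hence every derivative of $\tvaln$ up to order $2n$ is controlled by $C\Vert\tvaln\Vert_{H^n}$, and in particular $\B_i(\tv)(\cdot,0)=-\B_i(\tvaln)(\cdot,0)$ is itself $\le C\Vert\tvaln\Vert_{H^n}$. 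This lets me bound every interior contribution by $C\Vert\hv\Vert_{W^{2n,\infty}}(\Vert\tv\Vert_{H^n}^2+\Vert\tvaln\Vert_{H^n}^2)$ without ever needing $H^{n+1}$ control on $\tvaln$ (the apparent order-$(n+1)$ terms being absorbed this way).

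The heart of the proof, and the main obstacle, is the boundary bookkeeping generated by $\int_0^1\ty\,\hv\,\partial_x\tvaln$. Using that the highest-order conjugate operator reduces to $\B_{n-1}(f)=-\partial_x^nf$ (from \eqref{e:def-Bi}) together with the top Neumann condition \eqref{e:BC-tvaln-l}, one gets $\partial_x^n\tvaln(\cdot,0)=-\partial_x^n\tv(\cdot,0)$, which collapses the leading $x=0$ boundary term into $-\hv(\cdot,0)\,|\partial_x^n\tv(\cdot,0)|^2=-|\partial_x^n\tv(t,0)|^2v_l$; this is exactly the entering flux, which moves to the left-hand side of \eqref{e:aux-ineq-gen}. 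At $x=1$ the Dirichlet traces $\S_i(\tvaln)(\cdot,1)=0$ annihilate all but the top term $-v_r\,\partial_x^n\tv(\cdot,1)\,\partial_x^n\tvaln(\cdot,1)$, which Young's inequality absorbs into $\tfrac14|\partial_x^n\tv(t,1)|^2|v_r|+C\Vert\tvaln\Vert_{H^n}^2$, thereby borrowing from the exiting flux of \eqref{e:energ-ineq-gen}. The genuinely delicate part is the remaining lower-order boundary terms at $x=0$, which couple the trace $\partial_x^n\tv(\cdot,0)$ (not controlled by $\Vert\tv\Vert_{H^n}$) to lower traces of $\tvaln$ and to derivatives of $\hv$; disposing of them without leaving an uncompensated $|\partial_x^n\tv(\cdot,0)|^2$ requires the precise algebra of the operators $\B_i$ from Appendix A and the identity \eqref{e:tvaln-varform-Hk0r} tying the boundary data to $\Vert\tvaln\Vert_{H^n}^2$. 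Finally the left endpoint term is estimated by $|\tyl|^2+C\Vert\tvaln\Vert_{H^n}^2$ via Young's inequality (using $\tvaln(\cdot,0)\le C\Vert\tvaln\Vert_{H^n}$ and $v_l$ bounded), and shrinking the window yields \eqref{e:aux-ineq-gen}.
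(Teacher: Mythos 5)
Your proposal is correct in its architecture and reproduces the paper's strategy: test \eqref{e:weakform-CH-diff_gen} with $\tvaln$ on a shrinking window, convert $\int\ty\,\partial_t\tvaln$ and the endpoint terms into $\tfrac12\tfrac{d}{dt}\Vert\tvaln\Vert_{H^n}^2$ via Lemma \ref{l:IPP-itérée} and the variational identities \eqref{e:tvaln-varform-Hk0}--\eqref{e:tvaln-varform-Hk0r}, extract the entering flux $-|\partial_x^n\tv(\cdot,0)|^2v_l$ from the top boundary term using $\B_{n-1}=-\partial_x^n$ and the Neumann condition \eqref{e:BC-tvaln-l}, absorb the $x=1$ cross term $\partial_x^n\tv(\cdot,1)\partial_x^n\tvaln(\cdot,1)v_r$ by Young's inequality, and treat $\tyl$ by Young as well. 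Where you genuinely diverge is in how you control $\tvaln$ pointwise: the paper proves the trace bound $|\partial_x^n\tvaln(t,1)|\leq C\Vert\tvaln\Vert_{H^n}$ as a Rellich-type estimate (Proposition \ref{p:partialxntvaln1}, via the commutator identity of Lemma \ref{l:Lambs-lemma} with a cutoff), whereas you observe that $\tvaln(t,\cdot)$ lives in the fixed $2n$-dimensional space $\ker\An$, on which all norms are equivalent, so that $\Vert\tvaln\Vert_{W^{2n,\infty}}\lesssim\Vert\tvaln\Vert_{H^n}$ outright. In this one-dimensional constant-coefficient setting your argument is valid and strictly stronger: it not only replaces Proposition \ref{p:partialxntvaln1} but also disposes of the ``delicate'' lower-order boundary terms at $x=0$ that you flag but do not execute, since every trace $\B_i(\tv)(\cdot,0)=-\B_i(\tvaln)(\cdot,0)$ and $\S_i(\hv\partial_x\tvaln)(\cdot,0)$ is then directly bounded by $C\Vert\hv\Vert_{W^{n,\infty}}\Vert\tvaln\Vert_{H^n}$, and even the cross terms involving $\partial_x^n\tv(\cdot,0)=-\partial_x^n\tvaln(\cdot,0)$ are $O(\Vert\tvaln\Vert_{H^n}^2)$. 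The trade-off is that the paper's route (the computation \eqref{e:bidule1}--\eqref{e:bidule3}, which feeds $g=\hv\partial_x\tvaln$ into the variational formulation \eqref{e:tvaln-varform-Hk}) is the one that survives in higher dimensions, where the kernel of the elliptic operator is no longer finite-dimensional and a Rellich inequality is really needed. One small caveat: for the admissibility of $\tvaln$ as a test function, you propose to differentiate the boundary data $\B_i(\tv)(\cdot,0)$ in time, which requires traces of $\partial_x^{2n-1}\partial_t\tv$; the elliptic regularity mentioned after Lemma \ref{l:gen-tv-time-reg} only yields $\partial_t\tv\in L^\infty_tH^{2n-1}_x$, which is borderline for that trace. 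The paper's Lemma \ref{l:aux-reg-gen} circumvents this by establishing $\tvaln\in W^{1,\infty}(I,H^n)$ through the auxiliary function $V=\tv+\tvaln$ and the weak formulation, and you should do likewise.
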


\begin{rem}
If $v_r<0$, one can similarly introduce the function $\tvarn$ as
\begin{subequations}
\begin{align}
\label{e:ell-tvarn}
\An \tvaln &= 0, \\
\label{e:BC-tvarn-r}
\forall i\in[\![0,n-1]\!], \quad \B_i(\tvaln) (\cdot,1) &= -\B_i(\tv) (\cdot,1), \\
\label{e:BC-tvarn-l}
\forall i\in[\![0,n-1]\!], \quad  \S_i(\tvaln) (\cdot,0) &=0 .
\end{align}
\end{subequations}
and get the inequality
\begin{equation} \label{e:aux-ineq-gen-r}
\frac{1}{2}\frac{d}{dt}\left(\Vert\tvarn\Vert_{H^n}^2\right)
    + |\partial_x^n\tv(t,1)|^2v_r(t)
    \leq C \left(\Vert\tvarn\Vert_{H^n}^2+ \Vert\tv\Vert_{H^n}^2\right)
    + \tfrac{1}{4}|\partial_x^n \tv(t,0)|^2|v_l|
    + |\tyr|^2.
\end{equation}
\end{rem}

In order to prove Proposition \ref{l:aux-ineq-gen}, let us prove Lemma \ref{l:aux-reg-gen} and Proposition \ref{p:partialxntvaln1}.
Lemma \ref{l:aux-reg-gen} states that the auxiliary function $\tvaln$ is regular enough to be used as a test function in \eqref{e:weakform-CH-diff_gen}. 
Proposition \ref{p:partialxntvaln1} is an inequality similar to the classical Rellich estimate on the normal and tangential derivatives of harmonic functions, see for example \cite{Hormander-ineq}. 
We will use Proposition \ref{p:partialxntvaln1} to control one of the boundary terms on the outgoing boundaries.

\begin{lem} \label{l:aux-reg-gen}
The function $\tvaln$ lies in $W^{1,\infty}(I,H^n(0,1))$.
\end{lem}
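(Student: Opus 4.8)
The plan is to follow the template of Lemma~\ref{l:tva-time-reg}. Since the elliptic solution operator defining $\tvaln$ is independent of time and $\tvaln$ depends on $\tv$ only through the scalar boundary data $(\B_i(\tv)(\cdot,0))_{i=0}^{n-1}$, it suffices to prove that each map $t\mapsto \B_i(\tv)(t,0)$ is Lipschitz on $I$ and then to transfer this regularity to $\tvaln$ through the variational problem \eqref{e:tvaln-varform-Hk0r}.

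First I would carry out the transfer. Fix $t_0,t_1\in I$ and set $a:=\tvaln(t_1,\cdot)-\tvaln(t_0,\cdot)$, which belongs to $H^n_{0,r}(0,1)$. Subtracting \eqref{e:tvaln-varform-Hk0r} at $t_1$ and $t_0$ and testing against $g=a$ gives
\[
\int_0^1 |\And a|^2 = -\sum_{i=0}^{n-1}\big(\B_i(\tv)(t_1,0)-\B_i(\tv)(t_0,0)\big)\,\S_i(a)(0).
\]
Since $\int_0^1|\And a|^2=\Vert a\Vert_{H^n}^2$ and, by the embedding $H^n(0,1)\hookrightarrow C^{n-1}([0,1])$, one has $|\S_i(a)(0)|=|\partial_x^i a(0)|\lesssim \Vert a\Vert_{H^n}$ for $0\le i\le n-1$, this yields $\Vert\tvaln(t_1,\cdot)-\tvaln(t_0,\cdot)\Vert_{H^n}\lesssim \sum_{i}|\B_i(\tv)(t_1,0)-\B_i(\tv)(t_0,0)|$, reducing the statement to the scalar Lipschitz estimates.

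The main obstacle is that $\B_i(\tv)(\cdot,0)$ involves traces at $x=0$ of derivatives of $\tv$ up to order $2n-1$, which are not controlled by the regularity $\tv\in W^{1,\infty}(I,H^n)$ furnished by Lemma~\ref{l:gen-tv-time-reg}. I would circumvent this exactly as in the case $n=1$, by trading the pointwise high-order trace for a duality pairing. For each $i$ fix a function $g_i\in H^n_{0,r}(0,1)$ supported near $0$ with $\S_j(g_i)(0)=\delta_{ij}$ (for instance $g_i(x)=\tfrac{x^i}{i!}\chi(x)$ with $\chi$ as in Definition~\ref{d:syst-v-y-ell-gen}). Applying the Green formula of Lemma~\ref{l:IPP-itérée} to $f=\tv$, which is licit since $\tv\in L^\infty(I,W^{2n,\infty})$ and $\An\tv=\ty$, and using $\S_j(g_i)(1)=0$, I obtain the representation
\[
\B_i(\tv)(t,0)=\int_0^1 \And\tv(t,\cdot)\cdot\And g_i - \int_0^1 \ty(t,\cdot)\,g_i.
\]

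Finally I would check that both terms are Lipschitz in $t$. The first is Lipschitz because $w\mapsto\int_0^1\And w\cdot\And g_i$ is a bounded linear functional on $H^n$ and $t\mapsto\tv(t,\cdot)$ is Lipschitz into $H^n$. For the second I would test the weak formulation \eqref{e:weakform-CH-diff_gen} against the time-independent function $g_i$; since $\partial_t g_i=0$, $g_i(1)=0$ and $g_i(0)=\delta_{i0}$, this gives
\[
\int_0^1 g_i\,\big(\ty(t_1,\cdot)-\ty(t_0,\cdot)\big)
= \int_{t_0}^{t_1}\!\!\int_0^1\big((\ty\hv+\hy\tv)\partial_x g_i-(\ty\partial_x\hv+\hy\partial_x\tv)g_i\big)
+ \delta_{i0}\int_{t_0}^{t_1}\tyl v_l,
\]
whose right-hand side is bounded by $C|t_1-t_0|$ thanks to the $L^\infty$ bounds on $\ty,\hy$, on $\tv,\hv$ and their $x$-derivatives (Lemma~\ref{l:gen-tv-time-reg}), and on $\tyl,v_l$. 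Hence each $\B_i(\tv)(\cdot,0)$ is Lipschitz, and combined with the transfer step this gives $\tvaln\in W^{1,\infty}(I,H^n)$. The only genuinely delicate ingredient is the representation formula, which is precisely what lets the available $W^{1,\infty}(I,H^n)$ regularity of $\tv$ suffice in place of an unavailable time-Lipschitz control of the top-order boundary traces.
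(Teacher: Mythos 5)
Your proof is correct, and it reaches the conclusion by a genuinely different decomposition than the paper's. The paper's proof is a one-line trick: it sets $V:=\tv+\tvaln$ and observes that, precisely because $\tvaln$ is built from the boundary data $\B_i(\tv)(\cdot,0)$, the sum $V$ solves the clean variational problem $\int_0^1\And V\cdot\And g=\int_0^1\ty\,g$ for all $g\in H^n_{0,r}$, with no boundary terms left; the Lipschitz-in-time estimate for $V$ (hence for $\tvaln=V-\tv$) then follows by testing with $g=V(t_1,\cdot)-V(t_0,\cdot)$ and invoking the weak formulation exactly as in Lemma~\ref{l:gen-tv-time-reg}. You instead split the argument into a transfer estimate $\Vert\tvaln(t_1,\cdot)-\tvaln(t_0,\cdot)\Vert_{H^n}\lesssim\sum_i|\B_i(\tv)(t_1,0)-\B_i(\tv)(t_0,0)|$ plus a separate Lipschitz bound on each trace $\B_i(\tv)(\cdot,0)$, obtained through the representation formula $\B_i(\tv)(t,0)=\int_0^1\And\tv\cdot\And g_i-\int_0^1\ty\,g_i$. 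The underlying mechanism is identical in both cases — the high-order traces are inaccessible from $\tv\in W^{1,\infty}(I,H^n)$ alone, so one must route through $\An\tv=\ty$ and the duality-type time regularity of $\ty$ coming from time-independent test functions in \eqref{e:weakform-CH-diff_gen} — but your version makes explicit, and actually proves, the stronger statement that each individual boundary functional $t\mapsto\B_i(\tv)(t,0)$ is Lipschitz, which the paper's $V$-trick deliberately avoids having to isolate. The price is a slightly longer argument; the benefit is a quantitative piece of information (Lipschitz control of the traces of $\partial_x^{2k-1-i}\tv$ at $x=0$) that could be reused elsewhere. Both proofs rest on the same two lemmas (Lemma~\ref{l:IPP-itérée} and Lemma~\ref{l:gen-tv-time-reg}) and on the fact that $g(1)=\dots=\partial_x^{n-1}g(1)=0$ kills the right boundary contributions, so I would regard yours as a valid, marginally more explicit alternative.
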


\begin{proof}
Let us prove the regularity in time of the function $\tvaln$.
We call $V$ the function
\begin{equation}
V:= \tv + \tvaln.
\end{equation}
Since $\tv$ already belongs to $W^{1,\infty}(I,H^{n}([0,1])$, it is sufficient to prove that $V$ belongs to that space as well.
Moreover, for all $g\in H^n_{0,r}$ and for every $t\in I$, one has
\begin{equation}\label{e:fatigue1}
\int_{0}^{1}{\And V(t,\cdot) \cdot \And g} 
    = \int_{0}^{1}{\ty(t,\cdot)\, g}.
\end{equation}
Moreover by taking $g\in H^n_{0,r}(0,1)$ as a test function in \eqref{e:weakform-CH-diff_gen}, one gets for every $T_0\leq t_0<t_1\leq T_1$
\begin{equation}\label{e:fatigue2}
\int_{0}^{1}{(\ty(t_1,\cdot)-\ty(t_0,\cdot))g}
    = \int_{t_0}^{t_1}{\int_{0}^{1}{\big((\ty\hv+\hy\tv)\partial_x g-(\ty\partial_x\hv+\hy\partial_x\tv)g\big)}}.
\end{equation}
We apply \eqref{e:fatigue1} and \eqref{e:fatigue2} with $V(t_1,\cdot)-V(t_0,\cdot)$ instead of $g$ and since $n\geq 1$ we get
\begin{equation}
\Vert V(t_1,\cdot)-V(t_0,\cdot)\Vert_{H^n} 
    \leq |t_1-t_0|(\Vert\hy\Vert_{L^{\infty}}\Vert \tv\Vert_{H^1}+\Vert\ty\Vert_{L^{\infty}}\Vert\hv\Vert_{H^1}).
\end{equation}
\end{proof}
\begin{prop} \label{p:partialxntvaln1}
There exists a constant $C>0$ such that for every $t\in I$
\begin{equation}
|\partial_x^n \tvaln(t,1)| 
    \leq C \Vert \tvaln(t,\cdot)\Vert_{H^n(0,1)}.
\end{equation}
\end{prop}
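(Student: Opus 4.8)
The plan is to exploit the fact that, in one space dimension, $\tvaln(t,\cdot)$ solves a homogeneous constant-coefficient ordinary differential equation, so that for each fixed $t$ it belongs to one and the same finite-dimensional space. By \eqref{e:ell-tvaln}, for every $t\in I$ the function $w:=\tvaln(t,\cdot)$ satisfies $\An w=0$ on $(0,1)$. Since $\An=\sum_{k=0}^n(-\partial_x^2)^k$ has leading term $(-1)^n\partial_x^{2n}$, this is a linear homogeneous ODE of order $2n$ with constant coefficients; its solution set $\mathcal{V}:=\ker\An$ is therefore a $2n$-dimensional subspace of $C^\infty([0,1])$ that does not depend on $t$.

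On $\mathcal{V}$ the two sides of the claimed inequality are, respectively, a linear functional and a norm. Indeed, $L:w\mapsto\partial_x^n w(1)$ is a well-defined linear functional, because elements of $\mathcal{V}$ are smooth, so their trace at $x=1$ is meaningful; and $N:w\mapsto\Vert w\Vert_{H^n(0,1)}$ is a norm on $\mathcal{V}$, since it is clearly a seminorm and $N(w)=0$ forces $w=0$ in $L^2(0,1)$, hence $w\equiv 0$. Because $\mathcal{V}$ is finite-dimensional, every linear functional on it is bounded for the $H^n$ norm, so there exists $C>0$ — depending only on $n$ through $\mathcal{V}$, and in particular independent of $t$ — such that $|L(w)|\le C\,N(w)$ for all $w\in\mathcal{V}$. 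Taking $w=\tvaln(t,\cdot)\in\mathcal{V}$ then gives the announced estimate, uniformly in $t\in I$. I note that the homogeneous conditions $\S_i(\tvaln)(t,1)=0$ are not even needed for this argument: the bound holds for every element of $\mathcal{V}$.

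I expect no serious obstacle with this route; the only point to verify is that $\Vert\cdot\Vert_{H^n}$ is a genuine norm (and not merely a seminorm) on $\mathcal{V}$, which is immediate from smoothness of the solutions. For readers who prefer the explicit Rellich-type computation alluded to just before the statement, one can instead multiply $\An\tvaln=0$ by the Rellich multiplier $x\,\partial_x\tvaln$ and integrate by parts over $(0,1)$: the boundary contributions at $x=0$ vanish because of the factor $x$, those at $x=1$ of order below $n$ vanish thanks to $\partial_x^i\tvaln(t,1)=0$ for $0\le i\le n-1$, the top-order boundary term at $x=1$ is a fixed nonzero multiple of $|\partial_x^n\tvaln(t,1)|^2$, and the remaining volume integrals are controlled by $\Vert\tvaln\Vert_{H^n(0,1)}^2$ after using $\An\tvaln=0$ to trade derivatives of order greater than $n$ for lower-order ones. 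In that second approach the only delicate step is the sign-and-coefficient bookkeeping ensuring that the coefficient multiplying $|\partial_x^n\tvaln(t,1)|^2$ is indeed nonzero.
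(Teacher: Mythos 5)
Your main argument is correct, and it takes a genuinely different---and more elementary---route than the paper. The paper proves the estimate by a Rellich-type commutator identity: it applies Lemma \ref{l:Lambs-lemma} with $f=g=\tvaln$ and a cutoff weight $w=\chi$ vanishing near $0$ and equal to $1$ near $1$; the volume terms containing $\An\tvaln$ drop out, the boundary terms at $x=1$ combine (using $\S_i(\tvaln)(\cdot,1)=0$ for $i\le n-1$) into a nonzero multiple of $|\partial_x^n\tvaln(t,1)|^2$, and the two remaining commutator integrals are bounded by $\Vert\tvaln(t,\cdot)\Vert_{H^n}^2$ via Lemma \ref{l:comutation-Ln-fct}. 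Your observation that $\tvaln(t,\cdot)$ lies, for every $t$, in the fixed $2n$-dimensional space $\ker\An\subset C^{\infty}([0,1])$ (the variational solution is a classical one by the bootstrap of Lemma \ref{l:tech-ODE}, so the trace $\partial_x^n\tvaln(t,1)$ is meaningful) reduces the claim to the boundedness of a linear functional on a finite-dimensional normed space, with a constant depending only on $n$ and hence uniform in $t$; this is shorter, and, as you note, does not even use the boundary conditions. What the paper's route buys in exchange is robustness: the Rellich identity survives in higher dimensions and for variable coefficients, where the kernel of the elliptic operator is infinite-dimensional and your argument collapses (cf.\ the reference to \cite{Hormander-ineq}). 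One caveat on your secondary sketch with the multiplier $x\,\partial_x\tvaln$: the boundary contributions at $x=0$ involve quantities such as $\S_i(x\,\partial_x\tvaln)(0)$, which do not all vanish merely because of the factor $x$ (already $\partial_x(x\,\partial_x\tvaln)(0)=\partial_x\tvaln(t,0)$), and $\tvaln$ satisfies only inhomogeneous Neumann-type conditions at $x=0$; this is precisely why the paper takes a cutoff $\chi\equiv 0$ near $0$ rather than the weight $x$. That caveat does not affect your primary proof, which stands as written.
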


\begin{proof}
Let $\chi\in C^{\infty}(0,1)$ be a function equal to zero in a neighborhood of $0$ and equal to $1$ in a neighborhood of $1$.
We use Lemma \ref{l:Lambs-lemma} with $\tvaln$ instead of $f$ and of $g$, and $\chi$ instead of $w$.
\begin{align*} \notag
\int_{0}^{1}&{\left[\partial_x(\chi \cdot),\And\right](\tvaln) \cdot \And(\tvaln)}
     + \int_{0}^{1}{\And(\tvaln) \cdot \left[\chi\partial_x,\And\right](\tvaln)}  \\
   &= \left[\chi\And(\tvaln)\cdot\And(\tvaln)\right]_0^1 
     + \sum_{i=0}^{n-1}{[\B_i(\tvaln)S_i(\partial_x(\chi\tvaln))]_0^1}
     + \sum_{i=0}^{n-1}{[\B_i(\tvaln) S_i(\chi\partial_x\tvaln)]_0^1}.
\end{align*}
Using the assumptions on $\chi$, we get that
\begin{equation}
\left[\chi\And(\tvaln)\cdot\And(\tvaln)\right]_0^1
    = |\partial_x^n \tvaln(t,1)|^2,
\end{equation}
as well as
\begin{align}
\sum_{i=0}^{n-1}{[\B_i(\tvaln)S_i(\partial_x(\chi\tvaln))]_0^1}
    &= - |\partial_x^n \tvaln(t,1)|^2, \\
\sum_{i=0}^{n-1}{[\B_i(\tvaln) S_i(\chi\partial_x\tvaln)]_0^1}
    &= - |\partial_x^n \tvaln(t,1)|^2.
\end{align}
Therefore 
\begin{equation*}
|\partial_x^n \tvaln(t,1)|^2
    = - \int_{0}^{1}{\left[\partial_x(\chi \cdot),\And\right](\tvaln) \cdot \And(\tvaln)}
      - \int_{0}^{1}{\And(\tvaln) \cdot \left[\chi\partial_x,\And\right](\tvaln)},
\end{equation*}
which allows us to conclude that there exists a constant $C$ such that
\begin{equation}
|\partial_x^n \tvaln(t,1)|^2 
    \leq C 
          \Vert\chi\Vert_{W^{n+1,\infty}(0,1)}
          \Vert \tvaln(t,\cdot) \Vert_{H^n(0,1)}^2.
\end{equation}
\end{proof}
\begin{proof}[Proof of Proposition \ref{p:aux-ineq}]
We take $\tvaln$ as an auxiliary test function in \eqref{e:weakform-CH-diff_gen}, which gives
\begin{align} \notag
\int_{t_0}^{t_1}&{\int_{0}^{1}{\big(\ty\,\partial_t \tvaln + (\ty\hv+\hy\tv)\,\partial_x \tvaln -(\ty\partial_x\hv+\hy\partial_x\tv)\tvaln \big)}} 
    = \\ \label{e:gen-aux-deb}
    &-\int_{t_0}^{t_1}{\tvaln(\cdot,0)\ty(\cdot,0)} 
    +\int_{0}^{1}{\tvaln(t_1,\cdot)\ty(t_1,\cdot)}
    -\int_{0}^{1}{\tvaln(t_0,\cdot)\ty(t_0,\cdot)}.
\end{align}
Then we simplify each term.

\begin{itemize}
\item 
First let us fix $t\in I$ and compute $\int_{0}^{1}{\tvaln(t,\cdot)\ty(t,\cdot)}$
\begin{align}
\notag
\int_{0}^{1}{\ty(t,\cdot)\tvaln(t,\cdot)}
    =& \int_{0}^{1}{\An\tv(t,\cdot)\tvaln(t,\cdot)} \\ \notag
    =& \int_{0}^{1}{\And\tv(t,\cdot)\cdot \And\tvaln(t,\cdot)} 
      + \sum_{i=0}^{n-1}{[\B_i(\tv)(t,\cdot) \, \S_i(\tvaln)(t,\cdot)]_0^1} \\ \notag
    =& \sum_{i=0}^{n-1}{\B_i(\tv)(t,0) \, \S_i(\tvaln)(t,0)} \\ \notag  
    =& - \int_{0}^{1}{\left|\And \tvaln(t,\cdot)\right|^2}   \\ \label{e:trucmuch1}
    =& - \Vert\tvaln(t,\cdot)\Vert_{H^n}^2.
\end{align}
We used Lemma \ref{l:IPP-itérée} to get from the first line to the second. 
We used \eqref{e:tvaln-varform-Hk0} with $\tv$ instead of $g$ and the fact that $\tvaln$ belong to $H^k_{0,r}$ to get from the second to the third. 
Then we used \eqref{e:tvaln-varform-Hk0r} with $\tvaln$ instead of $g$ to get the last line.

\item
The same computation allows us to simplify $\int_{t_0}^{t_1}{\int_{0}^{1}{\ty \partial_t \tvaln}}$
\begin{align}
\notag
\int_{t_0}^{t_1}{\int_{0}^{1}{\ty \partial_t \tvaln}}
    =& \int_{t_0}^{t_1}{\int_{0}^{1}{\An\tv\, \partial_t \tvaln}} \\ \notag
    =& \int_{t_0}^{t_1}{\sum_{i=0}^{n-1}{\B_i(\tv)(t,0) \, \S_i(\partial_t\tvaln)(t,0)}} \\ \label{e:trucmuch2}
    =& - \int_{t_0}^{t_1}{\int_{0}^{1}{\And \tvaln \cdot \partial_t\And \tvaln}}.
\end{align}
We can simplify \eqref{e:trucmuch2} by integration by parts in time
\begin{align} \notag
\int_{t_0}^{t_1}{\int_{0}^{1}{\And \tvaln \cdot \partial_t\And \tvaln}}
    &= \tfrac{1}{2}\left(\int_{0}^{1}{\left|\And \tvaln(t_1,\cdot)\right|^2}-\int_{0}^{1}{\left|\And \tvaln(t_0,\cdot)\right|^2}\right) \\ \label{e:trucmuch3}
    &= \tfrac{1}{2}\Vert\tvaln(t_1,\cdot)\Vert_{H^n}^2
      - \tfrac{1}{2}\Vert\tvaln(t_0,\cdot)\Vert_{H^n}^2.
\end{align}

\item
The terms $\int_{0}^{1}{\hy \tv \partial_x \tvaln}$ and $\int_{0}^{1}{\hy \partial_x\tv \tvaln}$ can be bounded by the Cauchy-Schwarz inequality
\begin{align}
\left|\int_{0}^{1}{\hy \tv \partial_x \tvaln}\right|
    & \leq \Vert\hy\Vert_{L^{\infty}}(\Vert\tv\Vert_{H^1}^2+\Vert\tvaln\Vert_{H^1}^2), \\
\left|\int_{0}^{1}{\hy \tv \partial_x \tvaln}\right|
    & \leq \Vert\hy\Vert_{L^{\infty}}(\Vert\tv\Vert_{H^1}^2+\Vert\tvaln\Vert_{H^1}^2).
\end{align}

\item
The term $\int_{0}^{1}{\ty\partial_x \hv \tvaln}$ can be simplified using Lemma \ref{l:IPP-itérée}
\begin{align} \notag
\int_{0}^{1}{\ty\partial_x \hv \tvaln}
    &= \int_{0}^{1}{\An \tv \partial_x \hv \tvaln} \\ \notag
    &= \int_{0}^{1}{\And \tv \cdot\And(\partial_x \hv \tvaln)} 
      + \sum_{i=0}^{n-1}{\B_i(\tv)(0)S_i(\partial_x \hv \tvaln)} \\
    &= \int_{0}^{1}{\And (\tv-\tvaln) \cdot\And(\partial_x \hv \tvaln)}.
\end{align}
Then, using the Cauchy-Schwarz inequality once again as well as Lemma \ref{l:comutation-Ln-fct}
\begin{equation}
\left|\int_{0}^{1}{\ty\partial_x \hv \tvaln} \right|
    \leq \Vert\hv\Vert_{W^{n+1,\infty}}(\Vert\tv\Vert_{H^n}^2+\Vert\tvaln\Vert_{H^n}^2).
\end{equation}

\item
Let us simplify $\int_{0}^{1}{\ty \hv \partial_x \tvaln}$.
We apply Lemma \ref{l:Lambs-lemma} with $\tv$ instead of $f$, $\tvaln$ instead of $g$ and $\hv$  instead of $w$.
\begin{align} \notag
\int_{0}^{1}{\hv\ty \, \partial_x \tvaln}
    = -& \int_{0}^{1}{\left[\partial_x(\hv \cdot),\And\right](\tv) \cdot \And(\tvaln)}
      - \int_{0}^{1}{\And(\tv) \cdot \left[\hv\partial_x,\And\right](\tvaln)} \\ \label{e:machin-1}
      +& \sum_{i=0}^{n-1}{[\B_i(\tvaln)S_i(\partial_x(\hv\tv))]_0^1}
      + \sum_{i=0}^{n-1}{[\B_i(\tv) S_i(\hv\partial_x\tvaln)]_0^1}
      + \left[\hv\And(\tvaln)\cdot\And(\tv)\right]_0^1.
\end{align}
Both integrals can be bounded by using Lemma \ref{l:comutation-Ln-fct} as follows
\begin{align}
\left|\int_{0}^{1}{\left[\partial_x(\hv \cdot),\And\right](\tv) \cdot \And(\tvaln)}\right|
     &\leq C \Vert \hv\Vert_{W^{n+1,\infty}(0,1)}\left( \Vert\tv\Vert_{H^n(0,1)}^2+\Vert\tvaln\Vert_{H^n(0,1)}^2\right) \\
\left|\int_{0}^{1}{\And(\tv) \cdot \left[\hv\partial_x,\And\right](\tvaln)}\right|
     &\leq C \Vert \hv\Vert_{W^{n,\infty}(0,1)} \left(\Vert\tv\Vert_{H^n(0,1)}^2+\Vert\tvaln\Vert_{H^n(0,1)}^2\right).
\end{align}
For $i\leq n-2$, one has $\S_i(\partial_x(\hv\tv))=0$, and $\S_{n-1}(\partial_x(\hv\tv))=\hv \partial_x^n \tv $.
Moreover for any function $f$, $\B_{n-1}(f)=- \partial_x^n f$.
Therefore,
\begin{equation}
\sum_{i=0}^{n-1}{[\B_i(\tvaln)S_i(\partial_x(\hv\tv))]_0^1}
    = - \partial_x^n\tv(\cdot,1)\partial_x^n\tvaln(\cdot,1) v_r
     - |\partial_x^n\tv(\cdot,0)|^2 v_l.
\end{equation}
We also have
\begin{equation}\label{e:machin0}
\left[\hv\And(\tvaln)\cdot\And(\tv)\right]_0^1
    = \partial_x^n\tv(\cdot,1)\partial_x^n\tvaln(\cdot,1) v_r
     + |\partial_x^n\tv(\cdot,0)|^2 v_l.
\end{equation}
Moreover using the variational formulation \eqref{e:tvaln-varform-Hk} for $\tvaln$, one gets 
\begin{align}
\notag
\sum_{i=0}^{n-1}{[\B_i(\tv) S_i(\hv\partial_x\tvaln)]_0^1}
    =& \sum_{i=0}^{n-1}{\B_i(\tv)(\cdot,1) S_i(\hv\partial_x\tvaln)(\cdot,1)}
       - \sum_{i=0}^{n-1}{\B_i(\tv)(\cdot,0) S_i(\hv\partial_x\tvaln)(\cdot,0)} \\ \notag
    =& \B_{n-1}(\tv)(\cdot,1) \S_{n-1}(\hv\partial_x \tvaln)(\cdot,1)  \\ \notag
      &+ \int_{0}^{1}{\And \tvaln\cdot \And(\hv\partial_x\tvaln)} 
       + \sum_{i=0}^{n-1}{\B_i(\tvaln)(\cdot,1)\S_i(\hv\partial_x\tvaln)(1)} \\ \label{e:bidule1}
    =& -(\partial_x^n \tv(\cdot,1)+\partial_x^n \tvaln(\cdot,1))\partial_x^n \tvaln(\cdot,1) v_r
       + \int_{0}^{1}{\And \tvaln\cdot \And(\hv\partial_x\tvaln)}.
\end{align}
We exchange $\hv$ and $\And$ up to a commutator in $\int_{0}^{1}{\And \tvaln\cdot \And(\hv\partial_x\tvaln)}$
\begin{equation} \label{e:bidule2}
\int_{0}^{1}{\And \tvaln\cdot \And(\hv\partial_x\tvaln)}
    = \int_{0}^{1}{\And \tvaln\cdot \hv\partial_x\And\tvaln}
     +\int_{0}^{1}{\And \tvaln\cdot \left[\And,\hv\partial_x\right]\tvaln},
\end{equation}
then we integrate by part
\begin{align}
\notag
\int_{0}^{1}{\And \tvaln\cdot \hv\partial_x\And\tvaln}
    &= \tfrac{1}{2}\int_{0}^{1}{ \hv\partial_x\left(\left|\And\tvaln\right|^2\right)} \\ \label{e:bidule3}
    &= -\tfrac{1}{2}\int_{0}^{1}{\partial_x \hv \left|\And\tvaln\right|^2} 
     + \left[\left|\partial_x^n\tvaln\right|^2 \hv\right]_0^1.
\end{align}
Combining \eqref{e:bidule1}, \eqref{e:bidule2} and \eqref{e:bidule3}, we get that
\begin{align} \notag
\sum_{i=0}^{n-1}{[\B_i(\tv) S_i(\hv\partial_x\tvaln)]_0^1}
    =& - \partial_x^n \tv(\cdot,1)\partial_x^n \tvaln(\cdot,1) v_r
       + \int_{0}^{1}{\And \tvaln\cdot \left[\And,\hv\partial_x\right]\tvaln} \\ \label{e:machin1}
      &- \tfrac{1}{2}\int_{0}^{1}{\partial_x \hv |\And\tvaln|^2} 
       - |\partial_x^n \tv(\cdot,0)|^2 v_l.
\end{align}
Once again, we bound the trilinear term as follows
\begin{align}
\left|\int_{0}^{1}{\And \tvaln\cdot \left[\And,\hv\partial_x\right]\tvaln}\right|
    &\leq C \Vert \hv\Vert_{W^{n+1,\infty}} \Vert\tvaln\Vert_{H^n}^2, \\
\left|\int_{0}^{1}{\partial_x \hv \left|\And\tvaln\right|^2}\right|
    &\leq \Vert \hv\Vert_{W^{1,\infty}} \Vert\tvaln\Vert_{H^n}^2.
\end{align}
Using Proposition \ref{p:partialxntvaln1}, we can bound $ \partial_x^n \tv(\cdot,1)\partial_x^n \tvaln(\cdot,1) v_r$ as follows
\begin{align} \notag
\partial_x^n \tv(\cdot,1)\partial_x^n \tvaln(\cdot,1) v_r
    &\leq \tfrac{1}{2} \left(|\partial_x^n \tv(\cdot,1)|^2 v_r +  |\partial_x^n \tvaln(\cdot,1)|^2 v_r\right) \\ \label{e:machin2}
    &\leq \tfrac{1}{2} |\partial_x^n \tv(\cdot,1)|^2 v_r + C \Vert\tvaln\Vert_{H^n}^2.
\end{align}
Using \eqref{e:machin-1}-\eqref{e:machin0} and \eqref{e:machin1}-\eqref{e:machin2}, we get
\begin{align} \notag
\Big|\int_{t_0}^{t_1}{\int_{0}^{1}{\hv\ty \, \partial_x \tvaln}} 
     &+ \int_{t_0}^{t_1}{|\partial_x^n\tv(\cdot,0)|^2v_l} \Big|    \\ \label{e:trucmuch4}
    &\leq C \Vert\hv\Vert_{L^{\infty}([0,T],W^{n+1,\infty}(0,1))} \int_{t_0}^{t_1}{\left(\Vert\tv\Vert_{H^n}^2+\Vert\tvaln\Vert_{H^n}^2\right)}
     + \tfrac{1}{4}\int_{t_0}^{t_1}{ |\partial_x^n\tv (\cdot,1)|^2v_r}.
\end{align}

\item
Finally, we control the boundary term $\int_{t_0}^{t_1}{\tvaln(\cdot,0)\ty(\cdot,0)}$ using classical trace theorem
\begin{equation} \label{e:trucmuch5}
\left|\int_{t_0}^{t_1}{\tvaln(\cdot,0)\ty(\cdot,0)}\right|
    \leq \int_{t_0}^{t_1}{\left(\Vert\tvaln\Vert_{H^1}^2 + |\tyl|^2\right)}.
\end{equation}
\end{itemize}

Now, let us assemble \eqref{e:trucmuch1}, \eqref{e:trucmuch2}, \eqref{e:trucmuch3}, \eqref{e:trucmuch4} and \eqref{e:trucmuch5} to get
\begin{align} \notag
\tfrac{1}{2}\left[\Vert\tvaln\Vert_{H^n}^2\right]_{t_0}^{t_1}
     &+ \int_{t_0}^{t_1}{|\partial_x^n\tv(\cdot,0)|^2v_l} \\ \label{e:aux-ineq-gen-proof}
   \leq C & \int_{t_0}^{t_1}{\left(\Vert\tv\Vert_{H^n}^2+\Vert\tvaln\Vert_{H^n}^2\right)}
     + \tfrac{1}{4}\int_{t_0}^{t_1}{ |\partial_x^n\tv (\cdot,1)|^2v_r}
     + \int_{t_0}^{t_1}{|\tyl|^2}.
\end{align}
\end{proof}

\subsection{Gronwall argument and end of the proof}

Let $I_1,..., I_K$ be the intervals on which neither $v_l$ nor $v_l$ change sign.

We assume that the initial and boundary conditions are the same.
We prove by induction on $k$ that $\tv$ is equal to zero on $I_k$.
First for every $k\in[\![1,K]\!]$, we construct $\tvaln$ and/or $\tvarn$ on $I_k$ according to the signs of $v_l$ and $v_r$ on $I_k$.

\begin{itemize}
\item 
Initialization step: 
by hypothesis, $\tv$ is equal to $0$ at time zero.
\item
Induction step:
let us fix $k\in[\![1,K]\!]$ assume that $\tv$ is equal to $0$ at the beginning of $I_k$.
Then, the auxiliary functions created on interval $I_{k}$ are equals to $0$ at the beginning of interval~$I_{k}$.
We denote by $E_{rel,k}:I_{k}\rightarrow \R_+$ the quantity:
\begin{equation}
E_{rel,k}(t) 
    :=
    \begin{cases}
      \Vert\tv\Vert_{H^n}^2
         +\Vert\tvaln\Vert_{H^n}^2
         +\Vert\tvarn\Vert_{H^n}^2
         & \text{ if } I_{k}\subset \Gamma_l\cap\Gamma_r, \\
       \Vert\tv\Vert_{H^n}^2
         +\Vert\tvaln\Vert_{H^n}^2 
         & \text{ if }  I_{k}\subset\Gamma_l\setminus(\Gamma_l\cap\Gamma_r), \\
       \Vert\tv\Vert_{H^n}^2
         +\Vert\tvarn\Vert_{H^n}^2
         & \text{ if } I_{k}\subset \Gamma_r\setminus(\Gamma_l\cap\Gamma_r), \\
        \Vert\tv\Vert_{H^n}^2
         & \text{ if } I_{k}\subset [0,T]\setminus (\Gamma_l\cup\Gamma_r). 
    \end{cases}
\end{equation}
We sum inequality \eqref{e:energ-ineq-gen} with two times inequality \eqref{e:aux-ineq-gen} if $I_{k}\subset \Gamma_l$ and two times inequality \eqref{e:aux-ineq-gen-r} if $I_{k}\subset \Gamma_r$.
One gets that there exists a constant $C>0$ such that
\begin{equation}
E_{rel,k}'(t) 
    + \tfrac{1}{2}\left(|\partial_x^n\tv(t,0)|^2|v_l|
                        +|\partial_x^n\tv(t,1)|^2|v_r|\right)
    \leq C E_{rel,k}(t).
\end{equation}
Hence, by the Gronwall inequality, since $E_{rel,k}$ is equal to zero at the beginning of $I_{k}$, it is equal to zero on $I_{k}$.
In particular, $\tv$ is equal to $0$ on $I_k$.
Since $\tv$ belongs to $C^0([0,T],H^n(0,1))$, we get that $\tv$ is equal to $0$ at the beginning of $I_{k+1}$, which concludes the induction as well as the proof of Theorem \ref{t:third}.
\end{itemize}

\appendix

\section{Integration by parts and commutator for $\An$}

\begin{lem}\label{l:IPP-itérée}
Let $g\in \mathrm{H}^{2n}(0,1)$ and $g\in \mathrm{H}^n(0,1)$ be two functions.
We have the equality
\begin{equation} \label{e:IPP-itéré}
\int_{0}^{1}{\An f\,g} 
    = \int_{0}^{1}{\And f\cdot\And g} 
    + \sum_{i=0}^{n-1}{[\B_i(f)\S_i(g)]_0^1},
\end{equation}
where $\cdot$ is the standard scalar product on $\R^{n+1}$ and the operator $\B_i$ and $\S_i$ are defined through
\begin{subequations}
\begin{align} \label{e:def-Bi}
\forall x\in\{0,1\}, \quad 
\B_i (f)(x) &:= \sum_{k=i+1}^{n}{(-1)^{k+i}\partial_x^{2k-1-i}f(x)}, \\
\forall x\in\{0,1\}, \quad
\S_i (g)(x) &:= \partial_x^i g(x).
\end{align}
\end{subequations}
Let us remark that the operators $\B_i$ and $\S_i$ are boundary operators of respective order $2n-1-i$ and $i$.
\end{lem}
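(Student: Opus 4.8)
The plan is to expand the operator as $\An=\sum_{k=0}^{n}(-1)^k\partial_x^{2k}$ and to compute $\int_0^1 \An f\,g$ by integrating each summand by parts, moving half of the derivatives off $f$ and onto $g$. Since $f\in H^{2n}(0,1)$ and $g\in H^n(0,1)$, all the interior integrals and boundary traces produced below are well defined: in the $k$-th summand one differentiates $f$ at most to order $2k-1\le 2n-1$ and $g$ at most to order $k-1\le n-1$, which lies within the trace range of $H^{2n}$ and $H^n$ respectively (recall $\partial_x^m f\in H^{2n-m}$ admits a trace as soon as $m\le 2n-1$, and similarly for $g$).

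The key computation is a single-index identity: for each $0\le k\le n$,
\begin{equation}\label{e:plan-single}
(-1)^k\int_0^1 \partial_x^{2k}f\,g
 = \int_0^1 \partial_x^k f\,\partial_x^k g
 + \sum_{j=0}^{k-1}(-1)^{k+j}\left[\partial_x^{2k-1-j}f\,\partial_x^j g\right]_0^1.
\end{equation}
I would prove \eqref{e:plan-single} by induction on $k$, the case $k=0$ being trivial. Performing $k$ successive integrations by parts, each step transfers one derivative from $f$ to $g$, emits a boundary contribution $\left[\partial_x^{2k-1-j}f\,\partial_x^j g\right]_0^1$ carrying the alternating sign $(-1)^{k+j}$, and the remaining interior integral terminates after exactly $k$ steps at $\int_0^1 \partial_x^k f\,\partial_x^k g$. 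One checks the formula directly for $k=1,2$ before running the induction to fix the sign convention.

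Summing \eqref{e:plan-single} over $0\le k\le n$, the interior terms assemble by definition of $\And=(\id,\partial_x,\dots,\partial_x^n)$ and the Euclidean scalar product into
\[
\sum_{k=0}^{n}\int_0^1\partial_x^k f\,\partial_x^k g=\int_0^1 \And f\cdot\And g.
\]
It then remains to reorganise the double boundary sum $\sum_{k=1}^{n}\sum_{j=0}^{k-1}(-1)^{k+j}\left[\partial_x^{2k-1-j}f\,\partial_x^j g\right]_0^1$. I would exchange the order of summation by setting $i=j$: for fixed $i\in[\![0,n-1]\!]$ the index $k$ then runs over $i+1\le k\le n$, so the sum becomes
\[
\sum_{i=0}^{n-1}\left[\Big(\sum_{k=i+1}^{n}(-1)^{k+i}\partial_x^{2k-1-i}f\Big)\,\partial_x^i g\right]_0^1 .
\]
By \eqref{e:def-Bi} the inner parenthesis is precisely $\B_i(f)$, while $\partial_x^i g=\S_i(g)$, which delivers $\sum_{i=0}^{n-1}[\B_i(f)\S_i(g)]_0^1$ and hence the stated identity \eqref{e:IPP-itéré}.

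The argument is entirely elementary, so the only real obstacle is combinatorial rather than analytic: keeping the alternating signs and the ranges of summation consistent across the iterated integration by parts and, above all, through the final exchange of summation order that recognises the coefficient pattern of $\B_i$. Once the single-index identity \eqref{e:plan-single} is secured, no genuine difficulty remains.
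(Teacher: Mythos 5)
Your proposal is correct and follows essentially the same route as the paper: an induction on $k$ establishing the single-block identity $(-1)^k\int_0^1\partial_x^{2k}f\,g=\int_0^1\partial_x^kf\,\partial_x^kg+\sum_{j=0}^{k-1}(-1)^{k+j}[\partial_x^{2k-1-j}f\,\partial_x^jg]_0^1$, followed by summation over $k$ and an exchange of the order of summation in the double boundary sum to recognise the operators $\B_i$ and $\S_i$. The sign bookkeeping and the index ranges in your reorganised sum match the paper's, so nothing further is needed.
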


\begin{proof}
By induction on $k\in\N$ 
\begin{equation*}
 \forall f\in H^{2k}(0,1), \forall g\in H^{k}(0,1), \quad 
\int_{0}^{1}{(\partial_x^{2k}f)g} 
    = (-1)^k\int_{0}^{1}{(\partial_x^k f)(\partial_x^k g)} 
    + \sum_{i=0}^{k-1}{(-1)^i[(\partial_x^{2k-1-i}f)(\partial_x^{i}g)]_0^1}.
\end{equation*}
By summation on $k\in\{1,\dots,n\}$, we have
\begin{equation} \label{e:IPP-itéré1}
\int_{0}^{1}{\An f\,g} 
    = \int_{0}^{1}{\And f\cdot\And g} 
    + \sum_{k=0}^{n}{\sum_{i=0}^{k-1}{(-1)^{k+i}\left[\partial_x^{2k-1-i}f\partial_x^{i}g\right]_{0}^{1}}},
\end{equation}
which can be rewritten into \eqref{e:IPP-itéré}, since
\begin{align*}
\sum_{k=0}^{n}{\sum_{i=0}^{k-1}{(-1)^{k+i}\left[\partial_x^{2k-1-i}f\partial_x^{i}g\right]_{0}^{1}}}
    &= \sum_{i=0}^{n-1}{\sum_{k=i+1}^{n}{(-1)^{k+i}\left[\partial_x^{2k-1-i}f\partial_x^{i}g\right]_{0}^{1}}} \\
    &= \sum_{i=0}^{n-1}{[\B_i (f) \S_i(g)]_0^1}.
\end{align*}
\end{proof}

Let $k\in\N^*$, $f\in W^{k,\infty}(0,1)$, and $\mathcal{A}$ a differential operator of order $k$.
We denote by $[\mathcal{A},f]$ the commutator operator
\begin{equation}
\forall g\in H^{k}(0,1), \quad [f,\mathcal{A}]g := f\mathcal{A} g - \mathcal{A}(fg). 
\end{equation}
As before, there will be trilinear term to simplify in our estimates.
We bound them using the two following Lemmata.
Lemma \ref{l:comutation-Ln-fct} is a simple consequence of Leibniz formula.
Lemma \ref{l:Lambs-lemma} is the consequence of a repeated use of Lemma \ref{l:IPP-itérée}.

\begin{lem}\label{l:comutation-Ln-fct}
There exists a constant $C>0$ depending only on $k$ and $\mathcal{A}$ such that
\begin{equation}
\forall g\in H^{k}(0,1), \quad
\Vert[f,\mathcal{A}]g\Vert_{L^2} \leq C \Vert f\Vert_{W^{k,\infty}}\Vert g\Vert_{H^{k-1}}. 
\end{equation}
\end{lem}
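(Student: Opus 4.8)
The plan is to exploit the fact that the commutator $[f,\mathcal{A}]$ lowers the order of $\mathcal{A}$ by one, so that every surviving term carries at most $k-1$ derivatives on $g$. First I would reduce to the case of a pure derivative. Writing $\mathcal{A}=\sum_{j=0}^{k}a_j\partial_x^j$ with fixed coefficients $a_j$, the definition of the commutator gives
\begin{equation*}
[f,\mathcal{A}]g = \sum_{j=0}^{k}a_j\big(f\partial_x^jg-\partial_x^j(fg)\big),
\end{equation*}
so it suffices to estimate each $f\partial_x^jg-\partial_x^j(fg)$ in $L^2$ and then absorb $\max_j\Vert a_j\Vert_{L^\infty}$ into the final constant, which is precisely why $C$ is allowed to depend on $\mathcal{A}$.

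The key step is the Leibniz formula: for $0\le j\le k$,
\begin{equation*}
\partial_x^j(fg)=\sum_{i=0}^{j}\binom{j}{i}(\partial_x^if)(\partial_x^{j-i}g).
\end{equation*}
The term $i=0$ equals $f\partial_x^jg$ and therefore cancels against the $f\partial_x^jg$ coming from $f\mathcal{A}g$, leaving
\begin{equation*}
f\partial_x^jg-\partial_x^j(fg)=-\sum_{i=1}^{j}\binom{j}{i}(\partial_x^if)(\partial_x^{j-i}g).
\end{equation*}
This cancellation of the top-order (in $g$) contribution is the whole point: each remaining summand now carries at least one derivative on $f$, so that $1\le i\le j\le k$ and the factor on $g$ has order $j-i\le k-1$.

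To conclude I would bound each surviving summand pointwise and then in $L^2$ by
\begin{equation*}
\binom{j}{i}\Vert\partial_x^if\Vert_{L^\infty}\Vert\partial_x^{j-i}g\Vert_{L^2}\le\binom{j}{i}\Vert f\Vert_{W^{k,\infty}}\Vert g\Vert_{H^{k-1}},
\end{equation*}
using $i\le k$ and $j-i\le k-1$. Summing the finitely many terms over $i$ and $j$ and incorporating the bounds on the coefficients $a_j$ yields the claimed estimate with a constant $C$ depending only on $k$ and $\mathcal{A}$. There is no genuine obstacle here; the only thing to keep track of is the bookkeeping ensuring that the $g$-factor never exceeds order $k-1$, which is exactly what the cancellation of the $i=0$ term guarantees.
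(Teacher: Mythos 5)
Your proof is correct and is exactly the argument the paper has in mind: the paper offers no written proof beyond the remark that the lemma is ``a simple consequence of Leibniz formula,'' and your expansion of $\mathcal{A}=\sum_j a_j\partial_x^j$, cancellation of the $i=0$ Leibniz term, and term-by-term $L^\infty$--$L^2$ bound is precisely that consequence spelled out. The bookkeeping $1\le i\le j\le k$, $j-i\le k-1$ is right, so nothing is missing.
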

\begin{lem}\label{l:Lambs-lemma}
Let $f,g,w\in H^{2n}(0,1)$ be functions.
We have the following equality
\begin{align} \notag
\int_{0}^{1}{w\An (f) \, \partial_x g}
    &+ \int_{0}^{1}{\partial_x(wf) \, \An(g)}
     + \int_{0}^{1}{\left[\partial_x(w \cdot),\And\right](f) \cdot \And(g)}
     - \int_{0}^{1}{\And(f) \cdot \left[\And, w\partial_x\right](g)} \\ \label{e:lambs-equality}
   &= \sum_{i=0}^{n-1}{[\B_i(g)S_i(\partial_x(wf))]_0^1}
     + \sum_{i=0}^{n-1}{[\B_i(f) S_i(w\partial_xg)]_0^1}
     + \left[w\And(f)\cdot\And(g)\right]_0^1.
\end{align}
\end{lem}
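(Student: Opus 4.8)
The plan is to derive \eqref{e:lambs-equality} from two applications of Lemma \ref{l:IPP-itérée} together with a single Leibniz-rule integration by parts. Throughout, the brackets denote operator commutators $[X,Y]=XY-YX$, and since $\And=(\id,\partial_x,\dots,\partial_x^n)$ is vector-valued, the scalar operators $h\mapsto\partial_x(wh)$ and $h\mapsto w\partial_x h$ act componentwise when composed with $\And$. All products and derivatives appearing below are licit because $f,g,w\in H^{2n}(0,1)$ with $n\geq 1$, so that $H^{2n}$ is an algebra and $\partial_x^{n+1}g$ is available (in particular $w\partial_x g$ and $\partial_x(wf)$ lie in $H^n$).

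First I would rewrite the two integrals carrying $\An$ using Lemma \ref{l:IPP-itérée}. Reading $\int_0^1 w\An(f)\partial_x g=\int_0^1\An(f)\,(w\partial_x g)$ and $\int_0^1\partial_x(wf)\An(g)=\int_0^1\An(g)\,\partial_x(wf)$, the lemma gives
\[
\int_0^1 w\An(f)\partial_x g=\int_0^1\And f\cdot\And(w\partial_x g)+\sum_{i=0}^{n-1}[\B_i(f)\S_i(w\partial_x g)]_0^1,
\]
\[
\int_0^1\partial_x(wf)\An(g)=\int_0^1\And g\cdot\And(\partial_x(wf))+\sum_{i=0}^{n-1}[\B_i(g)\S_i(\partial_x(wf))]_0^1.
\]
The two boundary sums produced here are exactly two of the three boundary terms on the right-hand side of \eqref{e:lambs-equality}, so it remains to show that the two bulk integrals above, together with the two commutator integrals on the left, collapse to $[w\And(f)\cdot\And(g)]_0^1$.

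Next I would expand the commutators by their definition,
\[
[\partial_x(w\cdot),\And](f)=\partial_x(w\,\And f)-\And(\partial_x(wf)),\qquad [\And,w\partial_x](g)=\And(w\partial_x g)-w\,\partial_x(\And g).
\]
Substituting these, the term $+\int_0^1[\partial_x(w\cdot),\And](f)\cdot\And g$ contributes $-\int_0^1\And(\partial_x(wf))\cdot\And g$, which cancels the bulk integral coming from the second display, while $-\int_0^1\And f\cdot[\And,w\partial_x](g)$ contributes $-\int_0^1\And f\cdot\And(w\partial_x g)$, which cancels the bulk integral coming from the first display. The only surviving interior contributions are then
\[
\int_0^1\partial_x(w\,\And f)\cdot\And g+\int_0^1 w\,\And f\cdot\partial_x(\And g),
\]
whose integrand is precisely $\partial_x\big(w\,\And f\cdot\And g\big)$ by the Leibniz rule; integrating yields $[w\And(f)\cdot\And(g)]_0^1$. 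Collecting the three boundary contributions proves \eqref{e:lambs-equality}.

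The only delicate point is the bookkeeping of the componentwise operator commutators: the cancellations rely on the vectors $\And(w\partial_x g)$ and $\And(\partial_x(wf))$ matching \emph{exactly} between the output of Lemma \ref{l:IPP-itérée} and the expanded commutators, and on correctly redistributing the scalar factor $w$ between the two leftover integrals so that they assemble into a single total derivative. I expect this matching, rather than any analytic estimate, to be the main thing to verify with care.
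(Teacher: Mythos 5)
Your proof is correct and uses essentially the same ingredients as the paper's: two applications of Lemma \ref{l:IPP-itérée} (to the pairs $(f,\,w\partial_x g)$ and $(g,\,\partial_x(wf))$), the expansion of the two commutators, and one Leibniz-rule integration by parts producing $\left[w\And(f)\cdot\And(g)\right]_0^1$. The only difference is presentational — the paper derives the identity by a sequential chain of transformations starting from $\int_0^1 \An(f)\,w\partial_x g$, whereas you verify it by expanding the whole left-hand side and tracking cancellations — but the intermediate quantities and sign conventions match exactly.
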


\begin{proof}
Let us start by applying Lemma \ref{l:IPP-itérée} with $f$ and $w\partial_x g$ instead of $f$ and $g$
\begin{equation}\label{e:lambs-proof1}
\int_{0}^{1}{\An (f) \, w\partial_x g}
    = \int_{0}^{1}{\And (f) \cdot \And(w\partial_x g)} 
     + \sum_{i=0}^{n-1}{[\B_i(f)\S_i(w\partial_x g)]_0^1}.
\end{equation}
We exchange $\And$ and $w\partial_x$ up to a commutator:
\begin{equation}
\int_{0}^{1}{\And (f) \cdot \And(w\partial_x g)}
    = \int_{0}^{1}{\And (f) \cdot w\partial_x\And(g)}
     + \int_{0}^{1}{\And (f) \cdot \left[\And, w\partial_x\right] (g)}.
\end{equation}
We perform and integration by parts
\begin{equation}
\int_{0}^{1}{\And (f) \cdot w\partial_x\And(g)}
    = - \int_{0}^{1}{\partial_x(w\And (f)) \cdot\And(g)}
    + \left[w\And(f)\cdot\And(g)\right]_0^1.
\end{equation}
We exchange $\partial_x(w\cdot)$ and $\And$ up to another commutator
\begin{equation}
\int_{0}^{1}{\partial_x(w\And (f)) \cdot\And(g)}
    = \int_{0}^{1}{\And (\partial_x(wf)) \cdot \And(g)}
    + \int_{0}^{1}{\left[\partial_x(w\cdot), \And\right] (f) \cdot \And(g)}.
\end{equation}
Finally we apply Lemma \ref{l:IPP-itérée} once again, this time with $g$ and $\partial_x(wf)$ instead of $f$ and $g$
\begin{equation}\label{e:lambs-proof5}
\int_{0}^{1}{\And (\partial_x(wf)) \cdot \And(g)}
    = \int_{0}^{1}{\partial_x(wf) \, \An(g)}
     - \sum_{i=0}^{n-1}{\B_i(g) \S_i(\partial_x(wf))}.
\end{equation}
Combining \eqref{e:lambs-proof1}-\eqref{e:lambs-proof5}, we obtain \eqref{e:lambs-equality}.
\end{proof}

\section{Sketch of the proof of Theorem \ref{t:sec}} \label{s:app-exist}

Let $v$ be a function on $\Omega_T:= [0,T]\times [0,1]$, which verifies the boundary conditions \eqref{e:v-y-ell-gen-BCl} and \eqref{e:v-y-ell-gen-BCr}.
We denote by $\phi$ the flow of $v$.
It is defined as the unique solution of the following ODE
\begin{subequations}
\begin{align}
\partial_1 \phi(s,t,x) &= v(s,\phi(s,t,x)), \\
\phi(t,t,x) &= x.
\end{align}
\end{subequations}
The quantity $\phi(s,t,x)$ is the position at time $s$ of the particle which was in $x$ at time $t$.
The quantity $\phi(\cdot,t,x)$ is defined on an interval of time $[e(t,x),h(t,x)]$ where $e(t,x)$ and $h(t,x)$ are the time of entrance and exit of the domain for the particle going through $x$ at time $t$.

We define the sets $\Omega_L$, $\Omega_R$, $\Omega_I$ and $\Omega_{S}$ as:
\begin{align*}
\Omega_{S} :=& \{ (t,x)\in \Omega_T;\, 
          \exists s\in [e(t,x),h(t,x)], (\phi(s,t,x) =0 \text{ and } v_l(s) =0)  \\
                  &\text{ or } (\phi(s,t,x) =1 \text{ and } v_r(s) =0) \}  \\
               &\cup \{(s,\phi(s,0,0)); s\in [0,h(0,0)]\}
                   \cup \{(s,\phi(s,0,1)); s\in [0,h(0,1)]\}, \\
\Omega_I :=& \{ (t,x)\in \Omega_T\setminus \Omega_S;\, e(t,x)=0 \}, \\
\Omega_L :=& \{ (t,x)\in \Omega_T;\, i(t,x) > 0 \text{ and } \phi(e(t,x),t,x) = 0  \}, \\
\Omega_R :=& \{ (t,x)\in \Omega_T;\, i(t,x) > 0 \text{ and } \phi(e(t,x),t,x) = 1  \}.
\end{align*}
The sets $\Omega_I$, $\Omega_L$ and $\Omega_R$ are the sets of position of particles which enter the domain at time $0$, from the left and from the right respectively.
The set $\Omega_{S}$ is called the singular set, it contains the sets of particles which where at times $0$ at the boundary as well as the particles which where on the boundary with velocity zero at some point in time.

We define the function $y \in L^{\infty}(\Omega_T)$ by
\begin{itemize}
\item for $(x,t)\in \Omega_I$, $y(t,x) := y_0(\phi(0,t,x))\exp\left(-2\int_{0}^{t}{\partial_x v(s,\phi(s,t,x))\mathrm{ds}}\right)$,
\item for $(x,t)\in \Omega_L$, $y(t,x) := y_l^c(e(t,x))\exp\left(-2\int_{e(t,x)}^{t}{\partial_x v(s,\phi(s,t,x))\mathrm{ds}}\right)$,
\item for $(x,t)\in \Omega_R$, $y(t,x) := y_r^c(e(t,x))\exp\left(-2\int_{e(t,x)}^{t}{\partial_x v(s,\phi(s,t,x))\mathrm{ds}}\right)$.
\end{itemize}

We refer to \cite{Perrollaz} for the study of the transport equation with streching \eqref{e:trans-fort-strech-gen}.
The fact that we will use are :
\begin{itemize}
\item the function $y$ is well defined in $L^{\infty}(\Omega_T)$, together with the estimate
\begin{equation}\label{e:est-perro1}
\Vert y\Vert_{L^{\infty}(\Omega_T)} 
    \leq \max\left(\Vert y_0\Vert_{L^{\infty}},\Vert y_l^c\Vert_{L^{\infty}}, \Vert y_r^c\Vert_{L^{\infty}}\right) \exp\left(2T \Vert \partial_x v\Vert_{L^{\infty}}\right),
\end{equation}
\item the function $y$ is the unique solution of \eqref{e:trans-fort-strech-gen} with initial condition $y_0$ and boundary condition $y_r^c$ and $y_l^c$,
\item the function $y$ is in $W^{1,\infty}([0,T],H^{-1}(0,1))$ together with the estimate
\begin{equation}\label{e:est-perro2}
\Vert \partial_t y\Vert_{W^{1,\infty}([0,T],H^{-1}(0,1))} 
    \leq 3 \Vert y \Vert_{L^{\infty}(\Omega_T)} \Vert v\Vert_{L^{\infty}([0,T],W^{1,\infty}(0,1))},
\end{equation}
\end{itemize}
To simplify the notation, we denote $L^{\infty}_tW^{2n,\infty}_x$ instead of $L^{\infty}([0,T],W^{2n,\infty}(0,1))$ and similarly for $L^{\infty}_tW^{1,\infty}_x$, $W^{1,\infty}_tH^n_x$ as well as $W^{1,\infty}_t H^{-1}_x$.

We can then introduce the solution $u$ to the system
\begin{subequations}
\begin{align}
\An u        &= y,                                          \\ \label{e:bcl-u-ex-gen}
\mathbf{v_l} &= (\S_i(u)(0))_{i\in [\![0,n-1]\!]},          \\ \label{e:bcr-u-ex-gen}
\mathbf{v_r} &= (\S_i(u)(1))_{i\in [\![0,n-1]\!]}. 
\end{align}
\end{subequations}

We call $\mathcal{F}$ the operator which to $v\in L^{\infty}_tW^{2n,\infty}_x\cap W^{1,\infty}_tH^{n}_x$ associate $u\in L^{\infty}_tW^{2n,\infty}_x\cap W^{1,\infty}_tH^{n}_x$.

For $B_0$ and $B_1$ positive numbers, we introduce the space $C_{B_0,B_1,T}$ as 
\begin{equation}
C_{B_0,B_1,T} := \{ v\in   L^{\infty}_tW^{2n,\infty}_x\cap W^{1,\infty}_tH^{n}_x; \Vert v\Vert_{L^{\infty}W^{2n,\infty}}\leq B_0 \text{ and } \Vert v\Vert_{W^{1,\infty}H^{n}}\leq B_1 \}.
\end{equation}

The end of the proof is threefold : 
\begin{itemize}
\item find $B_0$ and $B_1$ such that $\mathcal{F}$ maps $C_{B_0,B_1,T}$ into itself,
\item prove that $C_{B_0,B_1,T}$ is compact with respect to $\Vert\cdot\Vert_{L^{\infty}_tW^{1,\infty}_x}$,
\item prove that $\mathcal{F}$ is continuous with respect to $\Vert\cdot\Vert_{L^{\infty}_tW^{1,\infty}_x}$.
\end{itemize}
Once all this is done one can conclude by applying Schauder's fixed point theorem.

\begin{lem}
There exists a time $T>0$ as well as $B_0$ and $B_1$ such that $\mathcal{F}$ maps $C_{B_0,B_1,T}$ into itself.
\end{lem}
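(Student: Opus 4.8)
The plan is to fix an arbitrary input $v\in C_{B_0,B_1,T}$, build the transported momentum $y$ and the output $u:=\mathcal{F}(v)$, and bound the two norms defining $C_{B_0,B_1,T}$ separately. The key structural observation is that the input bound $B_1$ is never used to control $y$ or $u$: the a priori estimates \eqref{e:est-perro1} and \eqref{e:est-perro2} only involve $\|v\|_{L^{\infty}_t W^{1,\infty}_x}\le B_0$. Consequently the constants can be fixed in the order $B_0$, then $T$, then $B_1$, with no circularity.

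For the $L^{\infty}_t W^{2n,\infty}_x$ bound, note that $v\in C_{B_0,B_1,T}$ gives $\|\partial_x v\|_{L^{\infty}(\Omega_T)}\le B_0$, so \eqref{e:est-perro1} yields $\|y\|_{L^{\infty}(\Omega_T)}\le M e^{2TB_0}$, where $M:=\max(\|y_0\|_{L^{\infty}},\|y_l^c\|_{L^{\infty}},\|y_r^c\|_{L^{\infty}})$. The second ($L^{\infty}$ to $W^{2n,\infty}$) estimate of Lemma~\ref{l:ell-reg-gen}, applied at each time, then gives a constant $C_0$ with
\[
\|u\|_{L^{\infty}_t W^{2n,\infty}_x}\le C_0\bigl(M e^{2TB_0}+N\bigr),\qquad N:=\sup_{[0,T]}\bigl(|\mathbf{v_l}|+|\mathbf{v_r}|\bigr).
\]
I would first set $B_0:=2C_0(M+N)$, which is fixed by the data alone, and then choose $T$ so small that $e^{2TB_0}\le 2$; this forces $\|u\|_{L^{\infty}_t W^{2n,\infty}_x}\le C_0(2M+N)\le B_0$.

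For the $W^{1,\infty}_t H^n_x$ bound, for $t_0<t_1$ the difference $u(t_1)-u(t_0)$ solves $\An(u(t_1)-u(t_0))=y(t_1)-y(t_0)$ with boundary data $\mathbf{v_l}(t_1)-\mathbf{v_l}(t_0)$ and $\mathbf{v_r}(t_1)-\mathbf{v_r}(t_0)$. The first ($H^{-n}$ to $H^n$) estimate of Lemma~\ref{l:ell-reg-gen} gives
\[
\|u(t_1)-u(t_0)\|_{H^n}\lesssim \|y(t_1)-y(t_0)\|_{H^{-n}}+|\mathbf{v_l}(t_1)-\mathbf{v_l}(t_0)|+|\mathbf{v_r}(t_1)-\mathbf{v_r}(t_0)|.
\]
Using the continuous embedding $H^{-1}(0,1)\hookrightarrow H^{-n}(0,1)$ (valid for $n\ge1$) together with \eqref{e:est-perro2}, which is a time-Lipschitz bound for $y$ in $H^{-1}$ with $\|v\|_{L^{\infty}_t W^{1,\infty}_x}\le B_0$, the first term is $\le C|t_1-t_0|\,\|y\|_{L^{\infty}}B_0\le 2CMB_0\,|t_1-t_0|$ for the $T$ already chosen. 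Combined with $\|u\|_{L^{\infty}_t H^n_x}\le C'\|u\|_{L^{\infty}_t W^{2n,\infty}_x}\le C'B_0$ (from $W^{2n,\infty}\hookrightarrow H^n$), this produces a bound $\|u\|_{W^{1,\infty}_t H^n_x}\le B_1$ with $B_1$ depending only on the already-fixed $B_0$, $T$ and the data; choosing that $B_1$ closes the self-mapping.

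The main obstacle is the ordering forced by the exponential factor $e^{2TB_0}$ in \eqref{e:est-perro1}: since it grows with $B_0$, one cannot shrink $T$ first, and the correct move is to fix $B_0$ large enough to absorb $M$ and $N$ with slack and only then make $T$ small. A secondary point is the time-regularity at the boundary: the display above converts the interior transport estimate into $H^n$-Lipschitz continuity in time only because \eqref{e:est-perro2} and $H^{-1}\hookrightarrow H^{-n}$ handle the interior part, while the boundary differences $|\mathbf{v_l}(t_1)-\mathbf{v_l}(t_0)|$ and $|\mathbf{v_r}(t_1)-\mathbf{v_r}(t_0)|$ must be controlled by the time-modulus of the prescribed data, which is exactly the regularity needed for $C_{B_0,B_1,T}$ to be a meaningful target space for $\mathcal{F}$.
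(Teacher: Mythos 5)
Your proposal is correct and follows essentially the same route as the paper: fix $B_0=2C(c_1+c_2)$ from the data alone, shrink $T$ so the exponential factor $e^{2TB_0}$ from \eqref{e:est-perro1} is absorbed, and only then set $B_1$ using \eqref{e:est-perro2} and the elliptic estimates of Lemma \ref{l:ell-reg-gen}. Your treatment of the $W^{1,\infty}_tH^n_x$ bound is in fact slightly more careful than the paper's, since you make explicit (via $H^{-1}\hookrightarrow H^{-n}$ and the difference quotients) that the boundary data $\mathbf{v_l},\mathbf{v_r}$ must carry enough time regularity for the Lipschitz-in-time bound, a point the paper's sketch leaves implicit.
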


\begin{proof}
Let us take $v\in L^{\infty}_tW^{2n,\infty}_x\cap W^{1,\infty}_tH^{n}_x$ and denote $u:= \mathcal{F}(v)$.
We denote by ,$c_1$ and $c_2$ 
\begin{align*}
c_1 &:= \max\left(\Vert y_0\Vert_{L^{\infty}_x},\Vert y_l^c\Vert_{L^{\infty}_t}, \Vert y_r^c\Vert_{L^{\infty}_t}\right), \\
c_2 &:= \Vert\mathbf{v_l}\Vert_{L^{\infty}_t}+\Vert\mathbf{v_r}\Vert_{L^{\infty}_t}
\end{align*}
the two constants depending on the initial and boundary data.
Combining the estimates \eqref{e:est-perro1} and \eqref{e:est-perro2} with the elliptic estimates from Lemma \ref{l:ell-reg-gen}, 
we get that there exists a constant $C$ depening only on $n$ such that
\begin{equation}
\Vert u\Vert_{L^{\infty}_tW^{2n,\infty}_x} 
    \leq C\left(c_1\exp(2T\Vert v\Vert_{L^{\infty}_tW^{2n,\infty}_x})+c_2\right),
\end{equation}
and 
\begin{equation}
\Vert \partial_t u\Vert_{L^{\infty}_tH^n_x} 
    \leq C\left(c_1\exp(2T\Vert v\Vert_{L^{\infty}_tW^{2n,\infty}_x})+c_2\right)\Vert v\Vert_{L^{\infty}_tW^{1,\infty}_x}.
\end{equation}
We chose $B_0:= 2C(c_1+c_2)$.
For $T$ small enough one has 
\begin{equation}
C\left(c_1\exp(2TB_0)+c_2\right)< 2C(c_1+c_2)=B_0,
\end{equation} 
we chose such a $T$.
Then we chose $B_1:=B_0^2$.
%
\end{proof}

\begin{lem}
For any $B_0$, $B_1$ and $T$, the space $C_{B_0,B_1,T}$ is compact with respect to the norm $\Vert\cdot\Vert_{L^{\infty}_tW^{1,\infty}_x}$.
\end{lem}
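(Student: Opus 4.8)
The plan is to invoke the Aubin--Lions--Simon compactness principle, whose two ingredients are a compact embedding in the space variable and an equicontinuity estimate in time, bridged by Ehrling's interpolation lemma. The first step I would record is the compact embedding
\begin{equation*}
W^{2n,\infty}(0,1) \hookrightarrow\hookrightarrow W^{1,\infty}(0,1).
\end{equation*}
Indeed, if $(f_k)$ is bounded in $W^{2n,\infty}(0,1)$, then, since $n\geq 1$, the functions $f_k$ and $f_k'$ are uniformly bounded and uniformly Lipschitz, so by Arzel\`a--Ascoli a subsequence converges together with its first derivative uniformly on $[0,1]$; the uniform limit of the derivatives being the derivative of the limit, the convergence takes place in $W^{1,\infty}(0,1)$.

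Next I would extract a converging subsequence. Given $(v_k)\subset C_{B_0,B_1,T}$, identify each $v_k$ with its Lipschitz-in-time representative valued in $H^n(0,1)$. For every fixed $t$ the family $\{v_k(t,\cdot)\}$ is bounded in $W^{2n,\infty}(0,1)$, hence precompact in $W^{1,\infty}(0,1)$ by the embedding above; a diagonal extraction yields a subsequence, still denoted $(v_k)$, converging in $W^{1,\infty}(0,1)$ at every rational time. The bound $\Vert v_k\Vert_{W^{1,\infty}_tH^n_x}\leq B_1$ gives the uniform estimate
\begin{equation*}
\Vert v_k(t,\cdot)-v_k(s,\cdot)\Vert_{L^2(0,1)}
    \leq C\,\Vert v_k(t,\cdot)-v_k(s,\cdot)\Vert_{H^n(0,1)}
    \leq C\,B_1\,|t-s|,
\end{equation*}
so the sequence is equicontinuous with values in $L^2(0,1)$. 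Equicontinuity together with convergence on the dense set of rational times gives, by a standard Arzel\`a--Ascoli argument, that $(v_k)$ is Cauchy in $C([0,T];L^2(0,1))$.

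The key step is to upgrade this $L^2$-in-time convergence to convergence in $L^\infty_tW^{1,\infty}_x$. For this I would use Ehrling's lemma for the triple $W^{2n,\infty}\hookrightarrow\hookrightarrow W^{1,\infty}\hookrightarrow L^2$: for every $\varepsilon>0$ there is $C_\varepsilon$ with $\Vert w\Vert_{W^{1,\infty}} \leq \varepsilon\,\Vert w\Vert_{W^{2n,\infty}} + C_\varepsilon\,\Vert w\Vert_{L^2}$. Applying this to $w=v_k(t,\cdot)-v_j(t,\cdot)$ and using $\Vert v_k(t,\cdot)-v_j(t,\cdot)\Vert_{W^{2n,\infty}}\leq 2B_0$ yields
\begin{equation*}
\sup_{t}\Vert v_k(t,\cdot)-v_j(t,\cdot)\Vert_{W^{1,\infty}}
    \leq 2\varepsilon B_0 + C_\varepsilon\sup_{t}\Vert v_k(t,\cdot)-v_j(t,\cdot)\Vert_{L^2}.
\end{equation*}
Letting $k,j\to\infty$ and then $\varepsilon\to 0$ shows that $(v_k)$ is Cauchy, hence convergent, in $L^\infty_tW^{1,\infty}_x$.

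Finally I would verify that the limit $v$ lies in $C_{B_0,B_1,T}$, so the set is compact and not merely precompact (it is also convex, being an intersection of two balls, which is what the Schauder argument requires). Convergence in $L^\infty_tW^{1,\infty}_x$ forces distributional convergence of $v_k$ and of all its space derivatives, and the uniform bounds $\Vert v_k\Vert_{L^\infty_tW^{2n,\infty}_x}\leq B_0$ and $\Vert v_k\Vert_{W^{1,\infty}_tH^n_x}\leq B_1$ pass to the limit by weak-$*$ lower semicontinuity, so $v$ satisfies the same bounds. I expect the main obstacle to be exactly the mismatch between the two defining norms: compactness is produced by the $W^{2n,\infty}$ control in space, whereas the only available time regularity is in $H^n$, and since $W^{1,\infty}\not\hookrightarrow H^n$ for $n\geq 2$ one cannot run Aubin--Lions directly with $H^n$ as the weak space. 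Routing the argument through the common weaker space $L^2$ and closing it with Ehrling's lemma is what overcomes this.
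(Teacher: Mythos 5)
Your proof is correct, but it follows a different route from the paper's. The paper splits into cases: for $n=1$ it simply cites the compactness argument of Perrollaz, while for $n\geq 2$ it uses $W^{1,\infty}_tH^{n}_x\hookrightarrow W^{1,\infty}_tH^{2}_x$ together with $H^1(0,1)\hookrightarrow C^{0,1/2}([0,1])$ to obtain a joint space--time modulus of continuity for $\partial_x u$ of the form $\bigl(|t-t'|+\sqrt{|x-x'|}\bigr)\Vert u\Vert_{W^{1,\infty}_tH^n_x}$ (up to an obvious typo in the displayed estimate), and then applies Ascoli once on the compact set $\overline{\Omega_T}$. That argument is shorter but genuinely needs $n\geq 2$, because for $n=1$ the time regularity $W^{1,\infty}_tH^1_x$ does not control $\partial_x u$ pointwise. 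Your argument --- compactness of $W^{2n,\infty}(0,1)$ in $W^{1,\infty}(0,1)$ in space, Lipschitz equicontinuity in time valued in the weaker common space $L^2(0,1)$, and Ehrling's lemma to close the gap --- treats all $n\geq 1$ uniformly and removes the external citation; it is the standard Aubin--Lions--Simon mechanism done by hand, and your diagnosis that one cannot run it naively because $W^{1,\infty}\not\hookrightarrow H^n$ for $n\geq 2$ is exactly right. You also verify explicitly that the limit remains in $C_{B_0,B_1,T}$ by weak-$*$ lower semicontinuity of the two defining norms, a closedness point the paper leaves implicit even though it is what upgrades precompactness to the compactness required by the Schauder fixed-point argument. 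Both proofs are valid; yours buys uniformity in $n$ and self-containedness at the cost of a slightly longer functional-analytic setup.
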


\begin{proof}
For $n=1$ this was done in \cite{Perrollaz}.

For $n\geq 2$, it is easier.
We have $W^{1,\infty}_tH^{n-1}_x \hookrightarrow W^{1,\infty}_tH^1_x$.
Therefore for $(t,x), (t',x')\in \Omega_T$ and $u\in C_{B_0,B_1,T}$, one has 
\begin{equation*}
|\partial_x u(t,x)-\partial_x u(t',x')| \leq |t-t'|\sqrt{|x-x'|}\Vert u\Vert_{W^{1,\infty}_tH^n_x},
\end{equation*}
and we conclude thanks to Ascoli's theorem.
\end{proof}

\begin{lem}
The operator $\mathcal{F}$ is continuous with respect to the norm $\Vert\cdot\Vert_{L^{\infty}_tW^{1,\infty}_x}$.
\end{lem}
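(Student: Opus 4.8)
The plan is to factor $\mathcal{F}$ as a composition $\mathcal{F}=\mathcal{E}\circ\mathcal{T}$, where $\mathcal{T}\colon v\mapsto y$ is the transport--stretching step (given $v$, one builds the flow $\phi$ and defines $y$ through the three formulas on $\Omega_I$, $\Omega_L$, $\Omega_R$) and $\mathcal{E}\colon y\mapsto u$ is the linear elliptic solve $\An u=y$ with the fixed boundary data $(\mathbf{v_l},\mathbf{v_r})$. The map $\mathcal{E}$ is linear and, by Lemma~\ref{l:ell-reg-gen}, bounded from $L^\infty$ data into $W^{2n,\infty}$; the real content is therefore the interplay between the merely $L^\infty$ (hence discontinuous) output of $\mathcal{T}$ and the smoothing performed by $\mathcal{E}$. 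The key observation I would isolate is that $\mathcal{E}$ turns a bounded, weak-$*$ (equivalently $L^1$-) convergent sequence of momenta into a $W^{1,\infty}_x$-convergent sequence of velocities.

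First I would fix $v_k\to v$ in $L^\infty_tW^{1,\infty}_x$ and study the flows. Since all $v_k$ share a uniform Lipschitz bound in $x$ and converge uniformly, the Cauchy--Lipschitz theorem together with a Gr\"onwall estimate gives locally uniform convergence $\phi_k\to\phi$ of the flows, along with convergence of the entrance and exit times at every point whose characteristic meets the boundary transversally, that is, off the singular set $\Omega_S$, which is Lebesgue-negligible. Because the stretching factors $\exp(-2\int\partial_x v_k(s,\phi_k(s,t,x))\,\mathrm{d}s)$ depend continuously on $(\partial_x v_k,\phi_k)$ and $\partial_x v_k\to\partial_x v$ uniformly, the defining formulas for $y$ then yield $y_k\to y$ almost everywhere on $\Omega_T$. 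By the uniform bound \eqref{e:est-perro1}, the $y_k$ are bounded in $L^\infty(\Omega_T)$, so dominated convergence upgrades this to $y_k\to y$ in $L^p(\Omega_T)$ for every $p<\infty$.

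Next I would transfer this to the velocities. Writing $w_k:=\mathcal{F}(v_k)-\mathcal{F}(v)$, linearity gives $\An w_k=y_k-y$ with homogeneous boundary data $\S_i(w_k)(0)=\S_i(w_k)(1)=0$ for $0\le i\le n-1$. Representing $w_k$ and its first derivative through the smooth, hence bounded, Green kernel of this boundary value problem---which is legitimate for all derivatives up to order $2n-1\ge 1$ since $n\ge 1$---the $L^1$ convergence $y_k-y\to 0$ forces $w_k$ and $\partial_x w_k$ to converge to $0$ pointwise in $x$ at each fixed $t$; the uniform equi-Lipschitz bound from Lemma~\ref{l:ell-reg-gen} then promotes this to uniform convergence in $x$ by Arz\'ela--Ascoli. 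To obtain the supremum in time I would use the uniform-in-time regularity of the $w_k$ (equicontinuity of $t\mapsto w_k(t,\cdot)$ into $W^{1,\infty}_x$, which for $n\ge 2$ follows from the $W^{1,\infty}_tH^n_x$ bound through $H^n\hookrightarrow W^{1,\infty}$, the case $n=1$ being that of \cite{Perrollaz}): were $\sup_t\Vert w_k(t,\cdot)\Vert_{W^{1,\infty}_x}$ not to vanish, a sequence of times $t_k\to t^\ast$ realising a lower bound would, combined with \eqref{e:est-perro2} and the fixed-time convergence, yield a contradiction. This gives $\Vert\mathcal{F}(v_k)-\mathcal{F}(v)\Vert_{L^\infty_tW^{1,\infty}_x}\to 0$.

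The main obstacle is the second step: $y$ is only $L^\infty$ and is built from characteristics, so one cannot expect $y_k\to y$ in $L^\infty$, and one must control the behaviour near the singular set $\Omega_S$ to secure the almost-everywhere convergence. Everything downstream is then a soft consequence of the smoothing by $\An^{-1}$, which converts the weak (in fact $L^p$) convergence of the momenta into strong $W^{1,\infty}$ convergence of the velocities precisely because only derivatives of order at most $2n-1$ enter the $W^{1,\infty}$ norm.
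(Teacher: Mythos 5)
First, a point of comparison: the paper gives no proof of this lemma at all, deferring entirely to Proposition 2.4 of \cite{Perrollaz}; your two-step architecture (transport step $\mathcal{T}\colon v\mapsto y$ via characteristics, followed by the elliptic solve $\mathcal{E}\colon y\mapsto u$, with the smoothing of $\An^{-1}$ converting weak convergence of momenta into $W^{1,\infty}_x$ convergence of velocities) is the natural one and is consistent with what the cited proof does for $n=1$. The elliptic half of your argument is essentially correct: the homogeneous-boundary Green kernel of $\An$ is bounded together with its derivatives up to order $2n-1$, so $\Vert w_k(t,\cdot)\Vert_{W^{1,\infty}_x}\lesssim\Vert (y_k-y)(t,\cdot)\Vert_{L^1_x}$ directly (the detour through Arz\'ela--Ascoli is unnecessary), and your treatment of the supremum in time via the uniform $W^{1,\infty}_tH^n_x$ bound on $C_{B_0,B_1,T}$ is fine for $n\ge 2$.

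There is, however, a genuine gap in the transport step. You deduce $y_k\to y$ almost everywhere from the locally uniform convergence $\phi_k\to\phi$ and the continuity of the stretching factor, but the data $y_0$, $y_l^c$, $y_r^c$ are merely $L^\infty$, hence discontinuous: the composition $y_0\circ\phi_k(0,t,\cdot)$ need not converge pointwise a.e.\ to $y_0\circ\phi(0,t,\cdot)$ even when $\phi_k(0,t,\cdot)\to\phi(0,t,\cdot)$ uniformly (take $y_0$ the indicator of a fat Cantor set and flows that act as small translations), and the same issue affects $y_l^c(e_k(t,x))$ and $y_r^c(e_k(t,x))$. The conclusion you actually need --- $y_k\to y$ in $L^1(\Omega_T)$, or better $\sup_t\Vert(y_k-y)(t,\cdot)\Vert_{L^1_x}\to 0$ --- is true, but it requires the standard repair: approximate $y_0,y_l^c,y_r^c$ by continuous functions in $L^1$, use the uniform two-sided bounds $e^{-T\Vert\partial_xv_k\Vert_{\infty}}\le\partial_x\phi_k\le e^{T\Vert\partial_xv_k\Vert_{\infty}}$ on the Jacobians of the flows (available on $C_{B_0,B_1,T}$) to control the error terms by change of variables uniformly in $k$ and $t$, and pass to the limit in the continuous approximants; one must also check that the symmetric difference of the regions $\Omega_I,\Omega_L,\Omega_R$ attached to $\phi_k$ and to $\phi$ has measure tending to zero, which uses the transversality of the characteristics at the boundary away from the finitely many zeros of $v_l,v_r$. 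With this substitution of ``$L^1$ convergence via density and Jacobian bounds'' for ``a.e.\ convergence by composition'', the rest of your argument goes through.
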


The proof of this Lemma does not differ from Proposition 2.4 in \cite{Perrollaz}.

Combining all the arguments above, we proved the existence of $B_0$, $B_1$ and of a function $u\in C_{B_0,B_1,T}$, which is a fixed point of $\mathcal{F}$.
That is
\begin{itemize}
\item the unique solution $y$ of \eqref{e:trans-fort-strech-gen} with initial condition $y_0$ and boundary condition $y_r^c$ and $y_l^c$ is equal to $\An u$,
\item the function $u$ verifies the boundary condition \eqref{e:bcl-u-ex-gen}-\eqref{e:bcr-u-ex-gen}.
\end{itemize}
As is, we created a weak solution in the sense of distribution of the Camassa-Holm equation.
It is a weak solution in the sense of Definition \ref{d:weaksol-gen} due to the Theorem 3 in \cite{Boyer}.

\ \par \ 

\paragraph{\textbf{Acknowledgements.}}
The authors are partially supported by 
the Agence Nationale de la Recherche,
Project SINGFLOWS, 
ANR-18-CE40-0027-01.
The author warmly thank Franck Sueur for the careful reading and advises,
and David Lannes for interesting discussions on the subject.


\begin{thebibliography}{10}

\bibitem{Bertozzi-Majda}
 {\sc A.L.Bertozzi, A.J.Majda,} (2002)
\newblock{Vorticity and incompressible flows.}
\newblock{Cambridge Press University}
 
\bibitem{Bhatt-Mikh} 
 {\sc R.Bhatt, A.V.Mikhailov,} (2010).
\newblock{On the inconsistency of the Camassa-Holm equation with shallow water theory.}
\newblock{ArXiv preprint : 1010.1932.v1}

\bibitem{Boyer} 
 {\sc F.Boyer,} (2005).
\newblock{Trace theorems and spatial continuity properties for the solutions of the transport equation.}
\newblock{Differential equation, 18 (8)} pp. 891-934.

\bibitem{dispersive-limit}
 {\sc D.Bresch, D.Lannes, G.Métivier,} (2019)
\newblock{Waves Interacting with a partially immersed obstacle in the boussinesq regime.}
\newblock{Analysis and PDE, mathematical science publishers, hal-02015531}

\bibitem{twocompCH}
 {\sc X.Cheng, Z.Han, Z.Wang, X.Zong, } (2011)
\newblock{Initial Boundary Value Problem and Asymptotic Stabilization of the Two-Component Camassa-Holm Equation}
\newblock{Hindawi Pub.Corp. Abst. Appl. Anal., } Article ID 635851,

\bibitem{Coc-Hol-karl}
 {\sc G.M.Coclite, H.Holden, K.H.Karlsen,} (2005).
\newblock{Global weak solution to a generalised hyperelastic-rod equation.}
\newblock{SIAMJ.Math.Annal, 37(4) } pp.1044-1069.

\bibitem{Coc-Hol-karl2}
 {\sc G.M.Coclite, H.Holden, K.H.Karlsen,} (2008).
\newblock{Well posedness of higher-order Camassa-Holm equations.}
\newblock{J.Differential equations, 246} pp.929-963.

\bibitem{C-H-ini}
 {\sc  R.Camassa,  D.D.Holm,} (1993).
\newblock{An integrable shallow water equation with peaked solitons.}
\newblock{Ph review letters, 71(11)} pp. 1661-1664.

\bibitem{Constantin-Escher-glob-blow}
 {\sc A.Constantin, J.Escher,} (1998).
\newblock{Global existence and blow-up for a shallow water equation.}
\newblock{Ann.SC.Norm.Super.Pisa.Cl.Sci, 4(26)} pp. 303-328.
 
\bibitem{Constantin-Escher-wave-break}
 {\sc A.Constantin, J.Escher,} (1998).
\newblock{Wave breaking for a non-linear, non-local shallow water equation.}
\newblock{Acta Math., 181(2)} pp. 229-243.

\bibitem{Constantin-Escher-blow-rate}
 {\sc A.Constantin, J.Escher,} (2000).
\newblock{On the blow-up rate and blow-up set of breaking waves for a shallow water equation.}
\newblock{Math.Z., 233(1)} pp.75-91.
 
\bibitem{Constantin-Escher-inv-scatt}
 {\sc A.Constantin, V.S.Gerdjikov, R.I.Ivanov,} (2006).
\newblock{Inverse scattering transform for the Camassa-Holm equation.}
\newblock{Inverse problems, 22(6)} pp.2197-2207.

\bibitem{Constantin-Kolev}
 {\sc A.Constantin, B.Kolev,} (2003).
\newblock{$H^k$ Metrics on the Diffeomorphism Group of the Circle}
\newblock{Journal of Nonlinear Mathematical Physics, 10(4)} pp.424–430.

\bibitem{Constantin-Ivanov}
 {\sc A.Constantin, R.Ivanov,} (2007).
\newblock{The Camassa-Holm equation as a geodesic flow for the $H^1$ right-invariant metric}
\newblock{ArXiv : 0706.3810.v1}

\bibitem{Constantin-Lannes} 
 {\sc A.Constantin, D.Lannes,} (2009).
\newblock{The Hydrodynamical Relevance of the Camassa–Holm and Degasperis–Procesi Equations}
\newblock{Arch. Rational Mech. Anal. 192.} pp.165–186.

\bibitem{Constantin-Molinet}
 {\sc A.Constantin, L.Molinet,} (2000).
\newblock{Global weak solutions for a shallow water equation.}
\newblock{Comm.Math.Phys.211 (1) } pp.45-61.

\bibitem{DKT-CH}
 {\sc C.De Lellis, T.Kappeler, P.Topalov} (2007).
\newblock{Low-regularity solution of the periodic Camassa-Holm equation.}
\newblock{Comm. Partial Differential Equations, 32(1-3) } pp. 87–126.

\bibitem{Esch-Yin-IBVP1}
 {\sc J.Escher, Z.Yin,} (2008)
\newblock{Initial boundary value problems of the Camassa-Holm equation.}
\newblock{Comm.Partial Differential equations, 33(1-3)} pp.377-395.

\bibitem{Esch-Yin-IBVP2}
 {\sc J.Escher, Z.Yin,} (2009)
\newblock{Initial boundary value problems of the Camassa-Holm equation.}
\newblock{J.funct.Anal., 256(2)} pp.479-508.

\bibitem{Fokas-Fuchs}
 {\sc  A.S.Fokas,  B.Fuchssteiner,} (1981).
\newblock{Symplectic structures, their bäcklund transformations and hereditary symetries.}
\newblock{Phys.D., 4(1)} pp. 47-66 

\bibitem{Hormander-ineq}
 {\sc L.Hörmander,}
\newblock{Inequalities between normal and tangential derivatives of harmonic functions.}
\newblock{Unpublished manuscripts.} DOI : 10.1007/978-3-319-69850-2\_6

\bibitem{ModShaWatLann}
 {\sc D.Lannes} (2020).
\newblock{Modeling Shallow Water waves}
\newblock{ArXiv preprint : 2001.09655}

\bibitem{Lenells}
 {\sc J.Lenells,} (2005).
\newblock{Travelling wave solution for the Camassa-Holm equation.}
\newblock{J.Differential equations, 217(2)} pp.393-430

\bibitem{Cusp-CH}
 {\sc F.Liu, Z.Qiao, G.Zhang,} (2007)
\newblock{Cusp and smooth solitons of the Camassa-Holm equation under an inhomogeneous boundary condition}
\newblock{Pacific Journal of Applied Mathematics 1(1)}  pp. 105–121.

\bibitem{Nois1}
 {\sc F.Noisette, F.Sueur,} (2021)
\newblock{Uniqueness of Yudovich's solutions to the 2D incompressible Euler equation despite the presence of  sources and sinks}
\newblock{ArXiv preprint: 2106.11556}

\bibitem{pap-weig:ini}
 {\sc A.A.Papin, W.A.Weigant,} (2014).
\newblock {On the uniqueness of the solution of the flow problem with a given vortex.}
\newblock {Math. Notes, Vol.96, No 6}  pp. 820-826.

\bibitem{Perrollaz}
 {\sc  V.Perrollaz,} (2010). 
\newblock{Initial boundary value problem and asymptotic stabilization of the Camassa–Holm equation on an interval.}
\newblock{Journal of Functional Analysis, 259(9)}  pp. 2333-2365.

\bibitem{yudo-given}
 {\sc Yudovich, V.I.}
\newblock{A two dimensional problem of unsteady flow of an ideal fluid across a given domain}
\newblock{Amer. Math. Soc. Translations, V. 57,} (1966), pp. 277-304.
(previously in Mat. Sb., 64 (1964), pp. 562-588 [in Russian]).

\end{thebibliography}
\end{document}